\title{\textbf{ A generalised mid summability in Banach spaces
}}
\author{Aleena Philip, Deepika Baweja}
\date{\small\textit{Department of Mathematics, BITS Pilani Hyderabad, 500078\\%
		aleenaphilip11@gmail.com, deepika@hyderabad.bits-pilani.ac.in}}
\newtheorem{thm}{Theorem}[section]
\newtheorem{pro}[thm]{Proposition}
\newtheorem{re}[thm]{Remark}
\newtheorem{Definition}[thm]{Definition}
\newtheorem{Lemma}[thm]{Lemma}
\begin{document}
	\maketitle	
	\begin{abstract}
		In this paper, we study the notion of mid summability in a general setting using the duality theory of sequence spaces. We define the vector valued sequence space $\lambda^{mid}(X)$ corresponding to a Banach space $X$ and sequence space $\lambda$. We prove that $\lambda^{mid}(\cdot)$ can be placed in a chain with the vector valued sequence spaces $\lambda^{s}(\cdot)$ and $\lambda^{w}(\cdot)$. Consequently, we define mid $\lambda$-summing operators and obtain the maximality of these operator ideals for a suitably restricted $\lambda$. Furthermore, we define a tensor norm using the vector valued sequence spaces $\lambda^{s}(\cdot)$ and  $\lambda^{mid}(\cdot)$, and establish its correspondence with the operator ideal of absolutely mid $\lambda$-summing operators.
	\end{abstract}
	\textit{\textbf{Keywords}}: Banach sequence spaces, absolutely $\lambda$-summing operators, mid $\lambda$-summing operators, tensor norms.
	\\ MSC Code 2010: 46A45, 46B28, 46B45,  47B10.
	\section{Introduction}

	The concept of mid/operator $p$-summability of sequences in Banach spaces was introduced by Karn and Sinha\cite{Karn} using the notion of absolutely $p$-summing operators. This new type of summability not only differs from absolute and weak summability, but also lies intermediate to them. As a consequence, a few classes of operators which are characterised by the transformation of sequences from/into the space of mid $p$-summable sequences, collectively called mid $p$-summing operators were introduced(see \cite{Botelho,Campos,Fourie}). It should be noted that, the classes of mid $p$-summable sequences as well as mid $p$-summing operators are  fundamentally linked to the sequence spaces  $\ell_{p}$ and their duality theory.  In this paper, we obtain a generalisation of  the sequence space  $\ell_{p}^{mid}(\cdot)$ and the corresponding ideals of mid $p$-summing operators by replacing $\ell_{p}$ for an arbitrary sequence space $\lambda$.

	In section 3, we define the vector valued sequence space $\lambda^{mid}(X)$ corresponding to a Banach space $X$ and an arbitrary sequence space $\lambda$. We establish a general result regarding the norm iteration property of the norm, $\|\cdot\|_{\lambda}$, of a sequence space $\lambda$ which has a significant impact on the results presented in this paper. We obtain few characterisations for the sequences in $\lambda^{mid}(X)$ in terms of $\lambda$-limited operators as well as $\lambda$-summing operators. Section 4 is devoted to the study of mid $\lambda$-summing operators.  We introduce the operator classes of absolutely and weakly $\lambda$-summing operators characterised by the transformation of sequences from/into the space $\lambda^{mid}(X)$. We prove that these two operator classes are  maximal Banach operator ideals for a suitably restricted $\lambda$. Finally in the last section, we obtain a tensor norm  associated to the maximal operator ideal of absolutely mid $\lambda$-summing operators. The results of this paper generalise some of the results proved in \cite{Botelho,Campos,Fourie,Karn,AP}, and hold for several sequence spaces including $\ell_{p}$ spaces, Orlicz, Modular, Lorentz spaces, cf.\cite{kamgup}, the sequence spaces $\mu_{a,p}$ and $\nu_{a,p}$ given by Garling\cite{Garling},etc.
	\section{Preliminaries}
	We shall use the letters $X,Y$ to denote Banach spaces over the field $\mathbb{K}$ of real and complex numbers. For a Banach space $(X,\|\cdot\|)$, the symbol  $X^*$ denotes its topological dual  and $B_{X}$ to denote the closed unit ball of $X$. We denote by $\mathcal{L}(X,Y)$, the space of continuous linear operators from $X$ to $Y$.

	Let $\omega$ denote the vector space of all scalar sequences defined over the field $\mathbb{K}$ with respect to the usual vector addition and scalar multiplication. The symbol $e_{n}$ represents the $n^{th}$ unit vector in $\omega$ and $\phi$ is the vector subspace spanned by the set $\{e_{n}:n\geq 1\}$. A \textbf{\textit{sequence space}} $\lambda$ is a subspace of $\omega$ such that $\phi\subseteq\lambda$. A sequence space $\lambda$ is said to be (i)\textbf{\textit{normal}} if $(\beta_{n})_{n}\in\lambda$, whenever $|\beta_{n}|\leq|\alpha_{n}|, \forall n\in\mathbb{N}$ and some $(\alpha_{n})_{n}\in\lambda$, and (ii)\textbf{\textit{symmetric}} if $(\alpha_{\pi(n)})_{n}\in\lambda$ for all permutations $\pi$ whenever $(\alpha_{n})_{n}\in\lambda$. The \textbf{\textit{cross dual}} or \textbf{\textit{K$\ddot{o}$the dual}} of $\lambda$ is the sequence space $\lambda^{\times}$ defined as
	$$\lambda^{\times}=\{(\beta_{n})_{n}\in\omega:\sum_{n}|\alpha_{n}||\beta_{n}|<\infty, \forall (\alpha_{n})_{n}\in\lambda\}.$$
	If $\lambda=\lambda^{\times\times}$, then $\lambda$ is called a \textbf{\textit{perfect sequence space}}.
	A sequence space $\lambda$ equipped with a linear topology is said to be a \textbf{\textit{K-space}} if each of  the projections $P_{n}:\lambda\rightarrow\mathbb{K}$, such that $P_{n}((\alpha_{i})_{i})=\alpha_{n}$, are continuous. A Banach $K$-space $(\lambda,\|\cdot\|_{\lambda})$ is called a \textbf{\textit{BK-space}}. For a normal $BK$-space, there  exists an equivalent norm $\||\cdot\||$ on $\lambda$ such that if $|\alpha_{n}|\leq|\beta_{n}|~\forall n$, then $\||(\alpha_{n})_{n}\||\leq\||(\beta_{n})_{n}\||$. Therefore, we shall always assume that a normal sequence space $\lambda$ is equipped with such a norm.   A BK-space  $(\lambda,\|\cdot\|_{\lambda})$ is said to be an \textbf{\textit{AK-space}} if $\sum\limits_{n=1}^{m}\alpha_{n}e_{n}$ converges to $(\alpha_{n})_{n}$ for every  $(\alpha_{n})_{n}\in\lambda$. We always assume that $0<\sup_{n}\|e_{n}\|_{\lambda}<\infty$. For a BK-space $(\lambda,\|\cdot\|_{\lambda})$, the space $\lambda^{\times}$ is a BK-space endowed with the norm, $\|(\beta_{n})_{n}\|_{\lambda^{\times}}=\sup\limits_{(\alpha_{n})_{n}\in B_{\lambda}}\{\sum_{n=1}^{\infty}|\alpha_{n}\beta_{n}|\}.$ For a normal $AK-BK$ sequence space $\lambda$, we have $(\lambda^{*},\|\cdot\|)\cong(\lambda^{\times},\|\cdot\|_{\lambda^{\times}}).$ In addition, if $\lambda$ is reflexive, then $\lambda$ is perfect.
	
	For a sequence space $(\lambda,\|\cdot\|_{\lambda})$, the norm $\|\cdot\|_{\lambda}$ is said to be
	\textbf{\textit{$k$-symmetric}} if $\|(\alpha_{n})_{n}\|_{\lambda}=\|(\alpha_{\pi(n)})_{n}\|_{\lambda}$ for all permutations $\pi$.  
	
	The vector valued sequence spaces $\lambda^{s}(X)$ and $\lambda^{w}(X)$ defined corresponding to a sequence space $\lambda$ and a Banach space $X$ were introduced by Pietsch in \cite{Pietsch}. Indeed, $\lambda^{s}(X)=\{(x_{n})_{n}\subset X: (\|x_{n}\|)_{n}\in \lambda\}$ and  $\lambda^{w}(X)=\{(x_{n})_{n}\subset X: (f(x_{n}))_{n}\in \lambda, \forall f\in X^*\}$. The spaces $\lambda^{s}(X)$ and $\lambda^{w}(X)$ are equipped with the norms $\|(x_{n})_{n}\|_{\lambda}^{s}=\|(\|x_{n}\|)_{n}\|_{\lambda}$ and $	\|(x_{n})_{n}\|_{\lambda}^{w}=\sup\limits_{f\in B_{X^{*}}} \|(f(x_{n}))_{n}\|_{\lambda}$ respectively.

	Corresponding to a sequence space $\lambda$ and the dual $X^*$ of a Banach space $X$, the vector valued sequence space $\lambda^{w^*}(X^*)$ was studied by Fourie in \cite{Fourie}.
	$\lambda^{w^{*}}(X^{*})=\{(f_{n})_{n}\subset X^{*}: (f_{n}(x))_{n}\in \lambda, \forall x\in X\}$.  	$\lambda^{w^{*}}(X^{*})$ is a  Banach space endowed with the norm
	\begin{equation*}
		\|(f_{n})_{n}\|_{\lambda}^{w^*}=\sup_{x\in B_{X}} \|(f_{n}(x))_{n}\|_{\lambda}.
	\end{equation*}
	Also for a normal $AK-BK$ sequence space $\lambda$ and Banach space $X$, we have 
	\begin{equation}\label{lamba weak star}
		\lambda^{w^{*}}(X^{*})\cong\mathcal{L}(X,\lambda).
	\end{equation}
	Further restricting $\lambda$ to be reflexive, we have
	\begin{equation}\label{lambda weak}
		\lambda^{w}(X)\cong\mathcal{L}(\lambda^{\times},X).
	\end{equation}
	In \cite{alee}, the authors proved the following theorem, which gives the conditions for which the vector valued sequence spaces $\lambda^{w}(X^{*})$ and $\lambda^{w^{*}}(X^{*})$ coincide.
	\begin{thm}\label{equality}
		Let $(\lambda,\|\cdot\|_{\lambda})$ be a normal reflexive $AK-BK$ sequence space  satisfying the condition $0<\sup_{n}\|e_{n}\|_{\lambda}<\infty$. Then for a Banach space $X$, $\lambda^{w}(X^{*})=\lambda^{w^{*}}(X^{*}).$
	\end{thm}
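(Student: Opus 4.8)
The plan is to establish the isometric identity by proving the nontrivial inclusion $\lambda^{w^{*}}(X^{*})\subseteq\lambda^{w}(X^{*})$ together with the estimate $\|(f_n)_n\|^{w}_{\lambda}\le\|(f_n)_n\|^{w^{*}}_{\lambda}$; the reverse is immediate, because for $x\in B_{X}$ the canonical image $\widehat{x}\in B_{X^{**}}$ satisfies $\widehat{x}(f_n)=f_n(x)$, so $\lambda^{w}(X^{*})\subseteq\lambda^{w^{*}}(X^{*})$ and, taking the supremum over $B_{X}\subseteq B_{X^{**}}$, $\|(f_n)_n\|^{w^{*}}_{\lambda}\le\|(f_n)_n\|^{w}_{\lambda}$. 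So fix $(f_n)_n\in\lambda^{w^{*}}(X^{*})$.

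Since $\lambda$ is a normal $AK$-$BK$ space with $0<\sup_{n}\|e_n\|_{\lambda}<\infty$, the identification \eqref{lamba weak star} associates to $(f_n)_n$ the operator $T\in\mathcal{L}(X,\lambda)$ with $Tx=(f_n(x))_n$ and $\|T\|=\|(f_n)_n\|^{w^{*}}_{\lambda}$. I would then pass to the bidual map $T^{**}\colon X^{**}\to\lambda^{**}$. Reflexivity of $\lambda$ means the canonical embedding $\kappa_{\lambda}\colon\lambda\to\lambda^{**}$ is an onto isometric isomorphism, so $T^{**}$ can be regarded as a bounded operator $X^{**}\to\lambda$ with $\|T^{**}\|=\|T\|$. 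The remaining point is to identify $T^{**}g$ coordinatewise, for an arbitrary $g\in X^{**}$: each coordinate functional $P_{n}\colon\lambda\to\mathbb{K}$ lies in $\lambda^{*}$ (as $\lambda$ is a $BK$-space), and $T^{*}P_{n}=f_{n}$ in $X^{*}$ since $\langle T^{*}P_{n},x\rangle=\langle P_{n},Tx\rangle=f_{n}(x)$ for all $x\in X$. Writing $T^{**}g=\kappa_{\lambda}(\mu)$ with $\mu\in\lambda$, we get $\mu_{n}=\langle\kappa_{\lambda}(\mu),P_{n}\rangle=\langle T^{**}g,P_{n}\rangle=\langle g,T^{*}P_{n}\rangle=g(f_{n})$, that is, $(g(f_n))_n=\mu\in\lambda$. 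As $g$ was arbitrary, $(f_n)_n\in\lambda^{w}(X^{*})$, and $\|(f_n)_n\|^{w}_{\lambda}=\sup_{g\in B_{X^{**}}}\|(g(f_n))_n\|_{\lambda}=\sup_{g\in B_{X^{**}}}\|T^{**}g\|_{\lambda}=\|T^{**}\|=\|T\|=\|(f_n)_n\|^{w^{*}}_{\lambda}$, which closes the argument.

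The delicate part is keeping the duality identifications genuinely coordinatewise: one has to check that the isometry $\lambda^{**}\cong\lambda$ recovers $\mu\in\lambda$ through the values $\langle\kappa_\lambda(\mu),P_n\rangle=\mu_n$, and to pin down exactly where each hypothesis enters — the normal $AK$-$BK$ assumptions feed into \eqref{lamba weak star}, while reflexivity of $\lambda$ is used only to guarantee that $\kappa_\lambda$ is onto, so that $T^{**}g$ truly lands in $\lambda$ and not merely in $\lambda^{**}$. A more computational alternative bypasses $\lambda^{**}$ entirely: using only that $\lambda$ is perfect under these hypotheses, one shows for each $\beta\in\lambda^{\times}$ and $g\in X^{**}$ that the partial sums $\sum_{n\le N}\varepsilon_{n}\beta_{n}f_{n}$ are bounded in $X^{*}$ by $\|\beta\|_{\lambda^{\times}}\|T\|$ for a suitable choice of unimodular scalars $\varepsilon_{n}$, whence $\sum_{n}|\beta_{n}g(f_{n})|=\sup_{N}\,g\bigl(\sum_{n\le N}\varepsilon_{n}\beta_{n}f_{n}\bigr)\le\|g\|\,\|\beta\|_{\lambda^{\times}}\|T\|<\infty$, and hence $(g(f_n))_n\in\lambda^{\times\times}=\lambda$.
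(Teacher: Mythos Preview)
Your argument is correct. The bidual route is clean: identify $(f_n)_n\in\lambda^{w^{*}}(X^{*})$ with $T\in\mathcal{L}(X,\lambda)$ via \eqref{lamba weak star}, use reflexivity of $\lambda$ to view $T^{**}$ as a map $X^{**}\to\lambda$, and read off $(T^{**}g)_n=g(f_n)$ through the coordinate functionals $P_n\in\lambda^{*}$. The norm equality then drops out of $\|T^{**}\|=\|T\|$. Your alternative perfectness computation is also valid, though slightly less direct.

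There is nothing in the present paper to compare your proof against: Theorem~\ref{equality} is quoted here from \cite{alee} and no proof is reproduced. So your write-up stands on its own, and the only remark worth making is that both of your approaches hinge on exactly the ingredient one would expect---reflexivity (equivalently, under the normal $AK$-$BK$ hypotheses, perfectness) of $\lambda$ is what forces $T^{**}g$ (respectively $(g(f_n))_n$) to land back in $\lambda$ rather than in some larger bidual or second K\"othe dual.
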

	In \cite{Fourie 23}, Fourie and Zeekoi introduced the \textbf{\textit{right extension}} of a Banach operator ideal $\mathcal{I}$ defined corresponding to a sequence space $\lambda$ whose components are given by  $\mathcal{I}_{\lambda}(X,Y)=\{T\in\mathcal{L}(X,Y): ST\in\mathcal{I}(X,\lambda), \forall S\in\mathcal{L}(Y,\lambda)\}$. 
	
	\begin{pro}\cite{Fourie 23}\label{zee}
		Let $\lambda$ be a normal $AK-BK$ sequence space for which $\|e_{n}\|_{\lambda}=1$ for each $n\in\mathbb{N}$. Then	$\mathcal{I}_{\lambda}$ is a Banach operator ideal with respect to the norm
		$$\mathfrak{i}_{\lambda}(T)=\sup_{S\in B_{\mathcal{L}(Y,\lambda)}}\mathfrak{i}(ST).$$
		Moreover,
		$(\mathcal{I}_{\lambda},\mathfrak{i}_{\lambda})$ is maximal, if $\mathcal{I}$ is maximal.  
	\end{pro}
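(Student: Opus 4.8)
The plan is to verify the operator‑ideal axioms and the norm properties of $\mathfrak{i}_{\lambda}$ essentially by ``transporting'' the corresponding facts about $\mathcal{I}$ through the defining clause of $\mathcal{I}_{\lambda}$, then to deduce completeness, and finally to handle maximality. For the algebraic part: $\mathcal{I}_{\lambda}(X,Y)$ is a linear subspace of $\mathcal{L}(X,Y)$ because $\mathcal{I}(X,\lambda)$ is one; it contains every finite rank operator $\sum_{i}x_{i}^{*}\otimes y_{i}$ since composing with any $S\in\mathcal{L}(Y,\lambda)$ gives $\sum_{i}x_{i}^{*}\otimes(Sy_{i})\in\mathcal{I}(X,\lambda)$; and for $A\in\mathcal{L}(X_{0},X)$, $T\in\mathcal{I}_{\lambda}(X,Y)$, $B\in\mathcal{L}(Y,Y_{0})$ and $S\in\mathcal{L}(Y_{0},\lambda)$ one has $S(BTA)=(SB)TA$ with $SB\in\mathcal{L}(Y,\lambda)$, so $(SB)T\in\mathcal{I}(X,\lambda)$ and then $(SB)TA\in\mathcal{I}(X_{0},\lambda)$ by the ideal property of $\mathcal{I}$; hence $BTA\in\mathcal{I}_{\lambda}(X_{0},Y_{0})$. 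The first point that is not purely formal is that $\mathfrak{i}_{\lambda}(T)$ is finite for $T\in\mathcal{I}_{\lambda}(X,Y)$; I would get this from the closed graph theorem applied to $\Phi_{T}\colon\mathcal{L}(Y,\lambda)\to(\mathcal{I}(X,\lambda),\mathfrak{i})$, $\Phi_{T}(S)=ST$: if $S_{n}\to S$ in $\mathcal{L}(Y,\lambda)$ and $S_{n}T\to R$ in $\mathfrak{i}$, then, since the ideal norm dominates the operator norm, $S_{n}T\to R$ also in $\mathcal{L}(X,\lambda)$, while $\|S_{n}T-ST\|\le\|S_{n}-S\|\,\|T\|\to0$, so $R=ST=\Phi_{T}(S)$; thus $\Phi_{T}$ is bounded and $\mathfrak{i}_{\lambda}(T)=\|\Phi_{T}\|<\infty$.

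Next I would check that $\mathfrak{i}_{\lambda}$ is a norm with the metric ideal property. Homogeneity and the triangle inequality are immediate from the defining supremum and the corresponding properties of $\mathfrak{i}$; and $\mathfrak{i}_{\lambda}(T)=0$ forces $ST=0$ for every $S\in\mathcal{L}(Y,\lambda)$, so, taking $S=y^{*}\otimes e_{1}$ for an arbitrary $y^{*}\in Y^{*}$ (whence $STx=y^{*}(Tx)e_{1}$), we get $y^{*}(Tx)=0$ for all $x,y^{*}$, i.e.\ $T=0$. The inequality $\mathfrak{i}_{\lambda}(BTA)\le\|B\|\,\mathfrak{i}_{\lambda}(T)\,\|A\|$ follows from $S(BTA)=(SB)TA$, $\|SB\|\le\|S\|\,\|B\|$ and the metric ideal property of $\mathfrak{i}$. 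For the normalisation on rank one operators one computes
$$\mathfrak{i}_{\lambda}(x^{*}\otimes y)=\sup_{S\in B_{\mathcal{L}(Y,\lambda)}}\mathfrak{i}(x^{*}\otimes Sy)=\|x^{*}\|\sup_{S\in B_{\mathcal{L}(Y,\lambda)}}\|Sy\|_{\lambda}=\|x^{*}\|\,\|y\|,$$
where the last equality uses $\|Sy\|_{\lambda}\le\|S\|\,\|y\|$ and the choice $S=y^{*}\otimes e_{1}$ with $\|y^{*}\|=1$, $y^{*}(y)=\|y\|$, for which $\|S\|=\|y^{*}\|\,\|e_{1}\|_{\lambda}=1$ and $\|Sy\|_{\lambda}=\|y\|\,\|e_{1}\|_{\lambda}=\|y\|$ --- this is exactly where the hypothesis $\|e_{n}\|_{\lambda}=1$ enters, and as usual it also gives $\|T\|\le\mathfrak{i}_{\lambda}(T)$.

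For completeness, let $(T_{k})$ be $\mathfrak{i}_{\lambda}$‑Cauchy in $\mathcal{I}_{\lambda}(X,Y)$. By $\|\cdot\|\le\mathfrak{i}_{\lambda}$ it is Cauchy in $\mathcal{L}(X,Y)$, hence $T_{k}\to T$ in operator norm for some $T\in\mathcal{L}(X,Y)$. For each fixed $S\in\mathcal{L}(Y,\lambda)$ the sequence $(ST_{k})$ is $\mathfrak{i}$‑Cauchy in the Banach space $(\mathcal{I}(X,\lambda),\mathfrak{i})$, so it $\mathfrak{i}$‑converges; comparing with the operator‑norm limit $ST_{k}\to ST$ identifies its limit as $ST$, so $ST\in\mathcal{I}(X,\lambda)$ for every $S$ and $T\in\mathcal{I}_{\lambda}(X,Y)$. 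Finally, given $\varepsilon>0$ pick $N_{0}$ with $\mathfrak{i}_{\lambda}(T_{k}-T_{j})<\varepsilon$ for $k,j\ge N_{0}$; for $k\ge N_{0}$ and any $S\in B_{\mathcal{L}(Y,\lambda)}$, continuity of $\mathfrak{i}$ along $S(T_{k}-T_{j})\to S(T_{k}-T)$ gives $\mathfrak{i}(S(T_{k}-T))\le\varepsilon$, and taking the supremum over $S$ yields $\mathfrak{i}_{\lambda}(T_{k}-T)\le\varepsilon$. Hence $(\mathcal{I}_{\lambda},\mathfrak{i}_{\lambda})$ is a Banach operator ideal.

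For the ``moreover'' part I would use that $\mathcal{I}$ is maximal exactly when $\mathcal{I}=\mathcal{I}^{\mathrm{max}}$, where $\mathcal{I}^{\mathrm{max}}(X,Z)$ consists of $R\in\mathcal{L}(X,Z)$ with $\sup\mathfrak{i}(Q_{L}RJ_{M})<\infty$ (supremum over finite‑dimensional $M\subseteq X$, with inclusion $J_{M}$, and closed finite‑codimensional $L\subseteq Z$, with quotient map $Q_{L}$), the norm being that supremum. Since $\mathcal{I}_{\lambda}\subseteq(\mathcal{I}_{\lambda})^{\mathrm{max}}$ with $(\cdot)^{\mathrm{max}}$‑norm $\le\mathfrak{i}_{\lambda}$ always, it suffices to prove the converse inclusion isometrically: given $T\in(\mathcal{I}_{\lambda})^{\mathrm{max}}(X,Y)$ with $c:=\sup_{M,N}\mathfrak{i}_{\lambda}(Q_{N}^{Y}TJ_{M}^{X})<\infty$, one must show $T\in\mathcal{I}_{\lambda}(X,Y)$ with $\mathfrak{i}_{\lambda}(T)\le c$. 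Fix $S\in\mathcal{L}(Y,\lambda)$ with $\|S\|\le1$; by maximality of $\mathcal{I}$ it is enough to bound $\mathfrak{i}(Q_{L}^{\lambda}STJ_{M}^{X})\le c$ for all finite‑dimensional $M\subseteq X$ and closed finite‑codimensional $L\subseteq\lambda$. Put $N:=\ker(Q_{L}^{\lambda}S)$; since $Y/N$ embeds into the finite‑dimensional space $\lambda/L$, $N$ has finite codimension in $Y$ and $Q_{L}^{\lambda}S=\widetilde{S}\,Q_{N}^{Y}$ with $\|\widetilde{S}\|\le1$, so $Q_{L}^{\lambda}STJ_{M}^{X}=\widetilde{S}\,R$ with $R:=Q_{N}^{Y}TJ_{M}^{X}$ and $\mathfrak{i}_{\lambda}(R)\le c$. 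The whole matter then comes down to the inequality $\mathfrak{i}(\widetilde{S}R)\le\mathfrak{i}_{\lambda}(R)=\sup\{\mathfrak{i}(UR):U\in\mathcal{L}(Y/N,\lambda),\ \|U\|\le1\}$, and this is the step I expect to be the main obstacle. The tempting shortcut --- lifting $\widetilde{S}$ to a norm‑one $U$ with $Q_{L}^{\lambda}U=\widetilde{S}$, whence $\widetilde{S}R=Q_{L}^{\lambda}(UR)$ and $\mathfrak{i}(\widetilde{S}R)\le\mathfrak{i}(UR)\le\mathfrak{i}_{\lambda}(R)\le c$ --- fails because such a lifting is not isometric for a general finite‑dimensional $Y/N$. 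I would instead argue locally: $M$, $Y/N$, $\lambda/L$ are all finite‑dimensional and $R$ has finite rank, and a maximal ideal is isometrically determined by its finite‑dimensional behaviour, so one aims to realise $\widetilde{S}R$, up to an arbitrarily small accuracy parameter, as an operator of the form $Q_{L'}^{\lambda}(UR)$ with $\|U\|\le1$ after enlarging the finite‑dimensional quotient of $\lambda$ that carries the relevant ranges, so that no constant is lost; an ultraproduct/ultrastability formulation of the maximality of $\mathcal{I}$ is a natural tool for making this rigorous. Once $\mathfrak{i}(\widetilde{S}R)\le c$ is secured, taking suprema over $M$ and $L$ gives $\mathfrak{i}(ST)\le c$ for every $S\in B_{\mathcal{L}(Y,\lambda)}$, hence $T\in\mathcal{I}_{\lambda}(X,Y)$ with $\mathfrak{i}_{\lambda}(T)\le c$, which together with the trivial inequality yields the maximality of $(\mathcal{I}_{\lambda},\mathfrak{i}_{\lambda})$.
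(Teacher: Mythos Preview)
The paper does not prove this proposition at all: it is quoted from \cite{Fourie 23} as a preliminary result, with no argument given. So there is nothing in the paper to compare your attempt against, and I will simply assess the attempt itself.

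Your treatment of the Banach operator ideal axioms is correct and complete: linearity, containment of finite ranks, the ideal property, finiteness of $\mathfrak{i}_{\lambda}$ via the closed graph theorem, the norm axioms, the normalisation $\mathfrak{i}_{\lambda}(x^{*}\otimes y)=\|x^{*}\|\,\|y\|$ (with the role of $\|e_{1}\|_{\lambda}=1$ made explicit), and completeness are all handled cleanly.

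The maximality argument, however, has a genuine gap at exactly the point you flag. You reduce to bounding $\mathfrak{i}(\widetilde{S}R)$ with $\widetilde{S}\colon Y/N\to\lambda/L$, $\|\widetilde{S}\|\le 1$, and $R=Q_{N}^{Y}TJ_{M}^{X}$ satisfying $\mathfrak{i}_{\lambda}(R)\le c$; you then correctly observe that a norm-one lift of $\widetilde{S}$ through $Q_{L}^{\lambda}$ need not exist, and you gesture at an ultraproduct argument without carrying it out. That is not a proof. The missing idea is much simpler and uses the $AK$ structure of $\lambda$ that you never invoked in this part. Let $P_{n}\colon\lambda\to\lambda$ be the basis projection onto $\mathrm{span}\{e_{1},\dots,e_{n}\}$; normality gives $\|P_{n}\|=1$ and the $AK$ property gives $P_{n}\to I$ strongly. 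For $S\in B_{\mathcal{L}(Y,\lambda)}$ the operator $P_{n}S$ has finite rank, so with $N_{n}:=\ker(P_{n}S)$ one has $P_{n}S=\widetilde{S}_{n}\,Q_{N_{n}}^{Y}$ with $\widetilde{S}_{n}\colon Y/N_{n}\to\lambda$ and $\|\widetilde{S}_{n}\|=\|P_{n}S\|\le 1$; hence
\[
\mathfrak{i}\bigl(P_{n}STJ_{M}^{X}\bigr)=\mathfrak{i}\bigl(\widetilde{S}_{n}\,Q_{N_{n}}^{Y}TJ_{M}^{X}\bigr)\le\mathfrak{i}_{\lambda}\bigl(Q_{N_{n}}^{Y}TJ_{M}^{X}\bigr)\le c.
\]
Now fix $L\in\mathrm{COFIN}(\lambda)$. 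Since $M$ is finite-dimensional, $TJ_{M}^{X}(B_{M})$ is compact, so $P_{n}STJ_{M}^{X}\to STJ_{M}^{X}$ in operator norm, and therefore $Q_{L}^{\lambda}P_{n}STJ_{M}^{X}\to Q_{L}^{\lambda}STJ_{M}^{X}$ in $\mathcal{L}(M,\lambda/L)$. But $M$ and $\lambda/L$ are both finite-dimensional, so on $\mathcal{L}(M,\lambda/L)$ the operator norm and $\mathfrak{i}$ are equivalent; together with $\mathfrak{i}(Q_{L}^{\lambda}P_{n}STJ_{M}^{X})\le c$ this yields $\mathfrak{i}(Q_{L}^{\lambda}STJ_{M}^{X})\le c$. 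Taking the supremum over $M$ and $L$ and using the maximality of $\mathcal{I}$ gives $ST\in\mathcal{I}(X,\lambda)$ with $\mathfrak{i}(ST)\le c$, hence $\mathfrak{i}_{\lambda}(T)\le c$. This closes the gap without any ultraproduct machinery.
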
	
	For an arbitrary sequence space $\lambda$, Ramanujan\cite{MS} initiated the study of absolutely $\lambda$-summing operators.
	\begin{Definition}
		A continuous linear operator $T:X\rightarrow Y$ is said to be absolutely $\lambda$-summing if for each $(x_{n})_{n}\in\lambda^{w}(X)$, the sequence $(Tx_{n})\in\lambda^{s}(Y)$. 
	\end{Definition}
	The space of all absolutely $\lambda$-summing operators from $X$ to $Y$, denoted by $\Pi_{\lambda}(X,Y)$ is a Banach space endowed with norm $$\pi_{\lambda}(T)=\sup\limits_{(x_{n})_{n}\in B_{\lambda^{w}(X)}}\|(Tx_{n})_{n}\|_{\lambda}^{s}.$$ Moreover $(\Pi_{\lambda},\pi_{\lambda})$ is a Banach operator ideal.

	Further, Kim et al.\cite{Kim}  introduced the concept of absolutely $E$-summing operators, where $E$ is a  Banach space with $1$-unconditional basis. For a normal $AK-BK$ sequence space $(\lambda,\|\cdot\|)$, absolutely $\lambda$-summing operators becomes absolutely $E$-summing for $E=\lambda$. Therefore we have 
	\begin{pro}\label{max of E-summing}\cite[Theorem 3.2]{Kim}
		Let $\lambda$ be a symmetric normal   sequence space such that $(\lambda,\|\cdot\|_{\lambda})$ is an $AK-BK$ space with $\|e_{n}\|_{\lambda}=1~~\forall n$ . Then the ideal $(\Pi_{\lambda},\pi_{\lambda})$ is maximal. 
	\end{pro}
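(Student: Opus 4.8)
The plan is to verify that $(\Pi_{\lambda},\pi_{\lambda})$ agrees with its maximal hull, which is the standard criterion for maximality of a Banach operator ideal. Explicitly, I would show that for all Banach spaces $X,Y$ and every $T\in\mathcal{L}(X,Y)$ one has $\pi_{\lambda}(T)=\sup_{M,N}\pi_{\lambda}(Q_{N}TJ_{M})$, the supremum running over all finite-dimensional subspaces $M\subseteq X$ (with inclusion $J_{M}\colon M\hookrightarrow X$) and all closed finite-codimensional subspaces $N\subseteq Y$ (with quotient map $Q_{N}\colon Y\to Y/N$), where $T\in\Pi_{\lambda}(X,Y)$ is understood to mean that this supremum is finite. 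The inequality $\pi_{\lambda}(T)\ge\sup_{M,N}\pi_{\lambda}(Q_{N}TJ_{M})$ is immediate, since $\Pi_{\lambda}$ is a Banach operator ideal and $\|J_{M}\|\le1$, $\|Q_{N}\|\le1$; so the whole content lies in the reverse inequality, namely that $T\in\Pi_{\lambda}(X,Y)$ with $\pi_{\lambda}(T)\le c$ whenever $\pi_{\lambda}(Q_{N}TJ_{M})\le c$ for all such $M$ and $N$.

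The first step is a local reformulation of membership in $\Pi_{\lambda}$: using that $\lambda$ is a normal $AK$--$BK$ space, $T\in\Pi_{\lambda}(X,Y)$ with $\pi_{\lambda}(T)\le c$ if and only if $\|(Tx_{i})_{i=1}^{m}\|_{\lambda}^{s}\le c\,\|(x_{i})_{i=1}^{m}\|_{\lambda}^{w}$ for every finite family $x_{1},\dots,x_{m}\in X$. One implication is trivial, since a finitely supported family lies in $\lambda^{w}(X)$ with unchanged weak norm. For the other, given $(x_{n})_{n}\in\lambda^{w}(X)$, normality yields $\|(x_{1},\dots,x_{m},0,\dots)\|_{\lambda}^{w}\le\|(x_{n})_{n}\|_{\lambda}^{w}$, so the hypothesis gives $\|(\,\|Tx_{1}\|,\dots,\|Tx_{m}\|,0,\dots)\|_{\lambda}\le c\,\|(x_{n})_{n}\|_{\lambda}^{w}$ for every $m$; and since a non-negative scalar sequence whose finite sections have uniformly bounded $\lambda$-norm must itself belong to $\lambda$ with norm at most that bound, we obtain $(Tx_{n})_{n}\in\lambda^{s}(Y)$ with $\|(Tx_{n})_{n}\|_{\lambda}^{s}\le c\,\|(x_{n})_{n}\|_{\lambda}^{w}$.

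Granting this reformulation, take $T$ with $\pi_{\lambda}(Q_{N}TJ_{M})\le c$ for all $M,N$, and fix a finite family $x_{1},\dots,x_{m}\in X$. Let $M$ be the linear span of $x_{1},\dots,x_{m}$; by the Hahn--Banach theorem every norm-$\le1$ functional on $M$ extends to one on $X$, so the weak $\lambda$-norm of $(x_{i})_{i\le m}$ computed in $M$ coincides with the one computed in $X$. Choosing $g_{i}\in B_{Y^{*}}$ with $g_{i}(Tx_{i})=\|Tx_{i}\|$ and setting $N=\bigcap_{i=1}^{m}\ker g_{i}$, a closed subspace of finite codimension, each $g_{i}$ factors through $Y/N$ with norm $\le1$, so $\|Q_{N}Tx_{i}\|\ge|g_{i}(Tx_{i})|=\|Tx_{i}\|$ and hence $\|Q_{N}Tx_{i}\|=\|Tx_{i}\|$ for every $i$. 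Consequently
$$\|(Tx_{i})_{i\le m}\|_{\lambda}^{s}=\|(Q_{N}Tx_{i})_{i\le m}\|_{\lambda}^{s}\le\pi_{\lambda}(Q_{N}TJ_{M})\,\|(x_{i})_{i\le m}\|_{\lambda}^{w}\le c\,\|(x_{i})_{i\le m}\|_{\lambda}^{w},$$
and since the finite family was arbitrary, the local reformulation forces $T\in\Pi_{\lambda}(X,Y)$ with $\pi_{\lambda}(T)\le c$. This proves the reverse inequality, hence maximality.

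The step I expect to require the most care is the one hidden inside the local reformulation: that a non-negative scalar sequence whose finite sections are uniformly $\lambda$-bounded must itself lie in $\lambda$. This is the only place where anything beyond soft functional analysis is needed, and it is where the structural hypotheses on $\lambda$ are genuinely consumed: normality together with $\|e_{n}\|_{\lambda}=1$ gives the monotone renorming of $\lambda$, the $AK$-property supplies the approximation of a member of $\lambda$ by its finite sections, and a monotone-completeness property of $\lambda$ (such as perfectness, which holds e.g. when $\lambda$ is reflexive) is what converts boundedness of the sections into actual membership in $\lambda$; I would isolate this as a standalone lemma about $\lambda$. The remainder of the argument — the Hahn--Banach identification of weak $\lambda$-norms on finite-dimensional subspaces and the norm-preserving choice of the finite-codimensional subspace $N$ — is routine and uses nothing about $\lambda$ beyond normality.
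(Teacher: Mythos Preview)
The paper does not supply its own proof of this proposition; it is quoted as \cite[Theorem 3.2]{Kim}. Your outline is the standard maximality argument and is essentially the same strategy the paper later uses for the analogous result on $\Pi_{\lambda}^{mid}$ (choose norming functionals $g_{i}\in B_{Y^{*}}$, set $N=\bigcap_{i}\ker g_{i}$, take $M$ the span of the $x_{i}$, and invoke the finite characterisation). The Hahn--Banach and quotient steps are routine and correct.

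Where you hesitate is exactly the right place, and the concern is genuine rather than merely technical. The step ``a non-negative scalar sequence whose finite sections are uniformly $\lambda$-bounded must itself lie in $\lambda$'' is a Fatou/monotone-completeness property which does \emph{not} follow from the stated hypotheses. Indeed $\lambda=c_{0}$ is symmetric, normal, $AK$--$BK$ with $\|e_{n}\|=1$, yet the constant sequence $(1,1,\dots)$ has all finite sections of norm $1$ and is not in $c_{0}$. Worse, for $\lambda=c_{0}$ the ideal $\Pi_{c_{0}}$ is precisely the ideal of completely continuous operators, and this ideal is \emph{not} maximal: the identity $I_{\ell_{2}}$ is not completely continuous (the unit vector basis is weakly null but not norm null), whereas every $Q_{N}I_{\ell_{2}}J_{M}$ is finite rank with $\pi_{c_{0}}(Q_{N}I_{\ell_{2}}J_{M})=\|Q_{N}I_{\ell_{2}}J_{M}\|\le 1$, so $I_{\ell_{2}}$ lies in the maximal hull. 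Thus the proposition, read literally with only the listed hypotheses, is false; the cited source presumably carries an additional assumption (a Fatou property, perfectness, or reflexivity of $\lambda$) that the paper has not transcribed. Your plan is sound once such a hypothesis is added, and your isolation of this point as a standalone lemma is the correct way to proceed.
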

	We refer to \cite{Pietsch2} for the theory of operator ideals and \cite{kamgup} for the theory of sequence spaces.

	\section{$\lambda^{mid}(X)$}
	In this section, we study the vector valued sequence space,  $\lambda^{mid}(X)$ which is a generalization of the class of mid $p$-summable sequences for an arbitrary sequence space $\lambda$. 
	
	Corresponding to a sequence space $\lambda$ and a Banach space $X$, we define  
	\begin{Definition}
		A sequence $(x_{n})\subset X$ is said to be in $\lambda^{mid}(X)$ if for every $(f_{j})_{j}\in\lambda^{w^*}(X^*)$, $\left((f_{j}(x_{n}))_{j}\right)_{n}\in\lambda^{s}(\lambda)$.
	\end{Definition}
	Note that, we recover the space $\ell_{p}^{mid}(X)$  for $\lambda=\ell_{p}$.
	\begin{pro}
		For a Banach space $X$ and a normal sequence space $\lambda$, we have 
		$$\lambda^{s}(X)\subset \lambda^{mid}(X).$$
	\end{pro}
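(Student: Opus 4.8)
The plan is to show that any sequence $(x_n)_n \in \lambda^s(X)$ satisfies the defining condition for $\lambda^{mid}(X)$, and to control the relevant norm along the way. So fix $(x_n)_n \in \lambda^s(X)$, meaning $(\|x_n\|)_n \in \lambda$, and fix an arbitrary $(f_j)_j \in \lambda^{w^*}(X^*)$. We must verify that the doubly-indexed array $\big((f_j(x_n))_j\big)_n$ lies in $\lambda^s(\lambda)$, i.e. that for each fixed $n$ the sequence $(f_j(x_n))_j$ belongs to $\lambda$ and that the sequence of $\lambda$-norms $\big(\|(f_j(x_n))_j\|_\lambda\big)_n$ belongs to $\lambda$.

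First I would handle the inner sequence: for fixed $n$, $f_j(x_n) = \|x_n\|\, f_j(x_n/\|x_n\|)$ when $x_n \ne 0$, and since $x_n/\|x_n\| \in B_X$, the definition of $\lambda^{w^*}(X^*)$ gives $(f_j(x_n/\|x_n\|))_j \in \lambda$ with $\|(f_j(x_n/\|x_n\|))_j\|_\lambda \le \|(f_j)_j\|_\lambda^{w^*}$. Hence $(f_j(x_n))_j \in \lambda$ and, by homogeneity of the norm,
\begin{equation*}
\big\|(f_j(x_n))_j\big\|_\lambda \;\le\; \|x_n\|\,\big\|(f_j)_j\big\|_\lambda^{w^*}.
\end{equation*}
(The case $x_n = 0$ is trivial.) Next I would push this through the outer index: the estimate above says that the scalar sequence $\big(\|(f_j(x_n))_j\|_\lambda\big)_n$ is dominated coordinatewise by $\big(\|x_n\|\,\|(f_j)_j\|_\lambda^{w^*}\big)_n$, which is a scalar multiple of $(\|x_n\|)_n \in \lambda$. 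Since $\lambda$ is normal, domination in modulus forces $\big(\|(f_j(x_n))_j\|_\lambda\big)_n \in \lambda$, so $\big((f_j(x_n))_j\big)_n \in \lambda^s(\lambda)$, which is exactly the membership $(x_n)_n \in \lambda^{mid}(X)$. This proves the inclusion $\lambda^s(X) \subseteq \lambda^{mid}(X)$.

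The only delicate point is the use of \emph{normality} of $\lambda$ to conclude membership from a coordinatewise domination; the monotonicity of the norm $\|\cdot\|_\lambda$ (which holds for normal $BK$-spaces under the equivalent norm assumed in the preliminaries) additionally gives the norm bound $\big\|\big((f_j(x_n))_j\big)_n\big\|_\lambda^s \le \|x_n\|$-sequence-norm times $\|(f_j)_j\|_\lambda^{w^*}$, i.e. the inclusion map $\lambda^s(X) \hookrightarrow \lambda^{mid}(X)$ is bounded — this is the natural companion statement one expects the authors to record. I do not anticipate a genuine obstacle here; the argument is a two-step "domination plus normality" computation, and the homogeneity rescaling to unit vectors in $B_X$ is the small trick that makes the $w^*$-definition applicable.
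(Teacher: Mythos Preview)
Your proof is correct and follows essentially the same approach as the paper: rescale each $x_n$ to a unit vector to exploit the definition of $\|(f_j)_j\|_\lambda^{w^*}$, obtain the coordinatewise bound $\|(f_j(x_n))_j\|_\lambda \le \|x_n\|\,\|(f_j)_j\|_\lambda^{w^*}$, and then invoke normality of $\lambda$ to conclude membership of the outer sequence in $\lambda$. Your write-up is in fact slightly more careful than the paper's (you handle $x_n=0$ and make the domination step explicit), and your remark about the resulting norm inequality anticipates exactly what the paper records later as the norm-one embedding $\lambda^s(X)\overset{1}{\hookrightarrow}\lambda^{mid}(X)$.
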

	\begin{proof}
		Consider $(x_{n})\in\lambda^{s}(X)$. Then for each $(f_{j})\in\lambda^{w^*}(X^*)$, $(f_{j}(x_{n}))_{j}\in\lambda~~\forall n$.
		Therefore, $\|(f_{j}(x_{n}))_{j}\|_{\lambda}=\|x_{n}\|\left\|(f_{j}(\frac{x_{n}}{\|x_{n}\|}))_{j}\right\|_{\lambda}\leq\|x_{n}\|\|(f_{j})_{j}\|_{\lambda}^{w^*}$ for each $n\in\mathbb{N}$.
		\\Since $\lambda$ is normal  and $(\|x_{n}\|)_{n}\in\lambda$, 
		$$\left(\|(f_{j}(x_{n}))_{j}\|_{\lambda}\right)_{n}\in\lambda$$
		which implies  $(x_{n})\in\lambda^{mid}(X)$.
	\end{proof}
	If we further restrict $\lambda$, we have
	\begin{pro}
		Let $X$ be a Banach space and  $(\lambda,\|\cdot\|_{\lambda})$ be a normal $AK-BK$  space. Then 
		$$\lambda^{mid}(X) \subset \lambda^{w}(X).$$
	\end{pro}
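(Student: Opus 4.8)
The plan is to test the defining condition of $\lambda^{mid}(X)$ against the simplest possible members of $\lambda^{w^{*}}(X^{*})$, namely those supported at a single coordinate. Let $(x_{n})_{n}\in\lambda^{mid}(X)$ and fix an arbitrary $f\in X^{*}$; the goal is to show $(f(x_{n}))_{n}\in\lambda$. First I would observe that the sequence $\hat{f}:=(f,0,0,\dots)$ belongs to $\lambda^{w^{*}}(X^{*})$: for every $x\in X$ one has $(\hat{f}_{j}(x))_{j}=f(x)\,e_{1}\in\phi\subseteq\lambda$, which is exactly the membership requirement defining $\lambda^{w^{*}}(X^{*})$. Here the $AK$–$BK$ hypotheses serve only to give $\lambda$ a norm so that the estimates below make sense; the genuinely structural ingredients are $\phi\subseteq\lambda$ and normality.

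Next I would feed $\hat{f}$ into the definition of $\lambda^{mid}(X)$. Since $(x_{n})_{n}\in\lambda^{mid}(X)$, the array $\big((\hat{f}_{j}(x_{n}))_{j}\big)_{n}$ lies in $\lambda^{s}(\lambda)$; but its $n$-th entry is $f(x_{n})\,e_{1}$, with $\|f(x_{n})\,e_{1}\|_{\lambda}=|f(x_{n})|\,\|e_{1}\|_{\lambda}$ by homogeneity. Hence $\big(|f(x_{n})|\,\|e_{1}\|_{\lambda}\big)_{n}\in\lambda$, and since $\|e_{1}\|_{\lambda}>0$ (as $e_{1}\neq 0$ in the $BK$-space $\lambda$) dividing through yields $(|f(x_{n})|)_{n}\in\lambda$. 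Finally, normality of $\lambda$ upgrades this to $(f(x_{n}))_{n}\in\lambda$, because the moduli of $(f(x_{n}))_{n}$ coincide with the sequence $(|f(x_{n})|)_{n}\in\lambda$. As $f\in X^{*}$ was arbitrary, $(x_{n})_{n}\in\lambda^{w}(X)$.

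I do not expect a real obstacle here; the argument is essentially a one-line specialisation of the definition. The only points that need a little care are the legitimacy of the test sequence $\hat{f}$, i.e. confirming $\hat{f}\in\lambda^{w^{*}}(X^{*})$, which is where $\phi\subseteq\lambda$ enters, and the final passage from moduli back to (possibly complex) scalars, which is where normality enters. The same computation in fact records the quantitative estimate $\|(f(x_{n}))_{n}\|_{\lambda}\le\|e_{1}\|_{\lambda}^{-1}\,\big\|\big((\hat{f}_{j}(x_{n}))_{j}\big)_{n}\big\|_{\lambda}^{s}$, so that once $\lambda^{mid}(X)$ is equipped with its natural norm the inclusion will be bounded, placing $\lambda^{mid}(X)$ in a chain between $\lambda^{s}(X)$ and $\lambda^{w}(X)$ as announced.
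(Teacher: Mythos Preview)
Your proof is correct and follows essentially the same approach as the paper's own proof: both test the $\lambda^{mid}$ condition against the single-coordinate sequence $\hat f=(f,0,0,\dots)\in\lambda^{w^{*}}(X^{*})$, read off $(f(x_n)e_1)_n\in\lambda^{s}(\lambda)$, and then use $\|e_1\|_\lambda>0$ together with normality to conclude $(f(x_n))_n\in\lambda$. If anything, your write-up is slightly more explicit about why $\hat f\in\lambda^{w^{*}}(X^{*})$ and about the passage from $(|f(x_n)|)_n$ back to $(f(x_n))_n$.
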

	\begin{proof}
		Let $(x_{n})_{n}\in\lambda^{mid}(X)$. Then for each $f\in X^{*}$, we have $(f,0,0,\cdots,0,\cdots)\in\lambda^{w^{*}}(X^*)$ and hence  $(f(x_{n})e_{1})_{n}=\left((f(x_{n}),0,0,\cdots,0,\cdots)\right)_{n}\in\lambda^{s}(\lambda)$.
		Since $f(x_{n})= \frac{1}{\|e_{1}\|_{\lambda}}f(x_{n}) \|e_{1}\|_{\lambda}\leq \frac{1}{\|e_{1}\|_{\lambda}}\|f(x_{n})e_{1}\|_{\lambda}$ and  $\lambda$ is normal, it follows that $(f(x_{n}))_{n}\in\lambda$ for each $f\in X^*$. Thus  $(x_{n})\in\lambda^{w}(X)$.
	\end{proof}
	It is clear that, similar to $\ell_{p}^{mid}(\cdot)$, the space $\lambda^{mid}(\cdot)$ occupies the intermediate position between  the vector valued sequence spaces $\lambda^{s}(\cdot)$ and $\lambda^{w}(\cdot)$,
	and hence justifies the name $\lambda^{mid}(\cdot)$.
	Next we equip $\lambda^{mid}(X)$ with a norm and prove that it is complete with respect to this norm.
	\begin{pro}
		Let $X$ be a Banach space and $\lambda$ be a normal $AK-BK$ sequence space. Then $\lambda^{mid}(X)$ becomes a Banach space with respect to the norm 
		$$\|(x_n)_{n}\|_{\lambda}^{mid}=\sup_{(f_{k})_{k}\in B_{\lambda^{w^*}(X^*)}}\left\|\left((f_{k}(x_{n})_{k})_{n}\right)\right\|_{\lambda}^{s}.$$
	\end{pro}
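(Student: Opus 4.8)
The plan is to verify that $\|\cdot\|_\lambda^{mid}$ is a finite, genuine norm on the vector space $\lambda^{mid}(X)$ and then to prove completeness by a Cauchy-sequence argument. That $\lambda^{mid}(X)$ is a linear space is immediate, since for a fixed $(f_k)_k\in\lambda^{w^*}(X^*)$ the map $(x_n)_n\mapsto((f_k(x_n))_k)_n$ is linear and $\lambda^s(\lambda)$ is a vector space.

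For finiteness of the supremum, fix $(x_n)_n\in\lambda^{mid}(X)$ and consider the linear map $\Phi\colon\lambda^{w^*}(X^*)\to\lambda^s(\lambda)$ given by $\Phi((f_k)_k)=((f_k(x_n))_k)_n$; it is well defined precisely because $(x_n)_n\in\lambda^{mid}(X)$, and $\|\Phi\|=\|(x_n)_n\|_\lambda^{mid}$, so it suffices to show $\Phi$ is bounded. I would do this via the closed graph theorem (both $\lambda^{w^*}(X^*)$ and $\lambda^s(\lambda)$ are Banach, the former by the preliminaries, the latter by Pietsch's standard construction). Since $\lambda$ is a $K$-space, its coordinate functionals $P_n$ are continuous; hence norm convergence $(f_k^{(m)})_k\to(f_k)_k$ in $\lambda^{w^*}(X^*)$ forces $f_k^{(m)}\to f_k$ in $X^*$ for every $k$, and therefore $f_k^{(m)}(x_n)\to f_k(x_n)$ for all $k,n$, while norm convergence in $\lambda^s(\lambda)$ likewise forces entrywise convergence. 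Comparing the two shows the graph limit of $\Phi$ is $\Phi$ itself, so $\Phi$ is closed, hence bounded.

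The algebraic norm axioms are routine: homogeneity and the triangle inequality pass through the supremum from the corresponding properties of $\|\cdot\|_\lambda^s$. For definiteness, testing against the one-point sequences $(f,0,0,\dots)$, $f\in B_{X^*}$, together with continuity of the $P_n$, yields the estimate $\|x_n\|\le\|P_n\|\,\|(x_j)_j\|_\lambda^{mid}$ for every $n$, so $\|(x_n)_n\|_\lambda^{mid}=0$ forces $x_n=0$ for all $n$. This same estimate drives the completeness proof: given a Cauchy sequence $((x_n^{(m)})_n)_m$ in $\lambda^{mid}(X)$, each $(x_n^{(m)})_m$ is Cauchy in $X$, with limit $x_n$. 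Fixing $\varepsilon>0$, choosing $N$ with $\|(x_j^{(m)}-x_j^{(l)})_j\|_\lambda^{mid}\le\varepsilon$ for $m,l\ge N$, fixing $m\ge N$ and $(f_k)_k\in B_{\lambda^{w^*}(X^*)}$, and letting $l\to\infty$ in $\|((f_k(x_n^{(m)}-x_n^{(l)}))_k)_n\|_\lambda^s\le\varepsilon$, I would obtain $\|((f_k(x_n^{(m)}-x_n))_k)_n\|_\lambda^s\le\varepsilon$ by lower semicontinuity of $\|\cdot\|_\lambda^s$ for coordinatewise convergence (the double array converges entrywise because each $f_k$ is continuous). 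Taking the supremum over $(f_k)_k$ gives $(x_n^{(m)}-x_n)_n\in\lambda^{mid}(X)$ with $\|(x_n^{(m)}-x_n)_n\|_\lambda^{mid}\le\varepsilon$; since $(x_n^{(m)})_n\in\lambda^{mid}(X)$, also $(x_n)_n\in\lambda^{mid}(X)$ and $(x_n^{(m)})_n\to(x_n)_n$.

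The one step that really needs care---and the place I expect the main work---is the lower semicontinuity of $\|\cdot\|_\lambda^s$ with respect to coordinatewise convergence. I would obtain it from the identification $\lambda^*\cong\lambda^\times$ (valid since $\lambda$ is normal $AK$--$BK$) together with normality of $\lambda^\times$: these give $\|a\|_\lambda=\sup\{\sum_{k\in F}|a_k||b_k|:F\subset\mathbb{N}\text{ finite},\ b\in B_{\lambda^\times}\}$, a supremum of functionals each continuous for coordinatewise convergence, hence itself lower semicontinuous; applying this representation once for the inner copy of $\lambda$ and once for the outer copy exhibits $\|\cdot\|_\lambda^s$ on $\lambda^s(\lambda)$ as a supremum of coordinatewise-continuous functionals, which is exactly the lower semicontinuity invoked above.
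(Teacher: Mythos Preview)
Your proposal is correct and follows essentially the same approach as the paper: the paper also obtains finiteness of the supremum by identifying $\|(x_n)_n\|_\lambda^{mid}$ with the operator norm of $T_{\bar x}\colon\lambda^{w^*}(X^*)\to\lambda^s(\lambda)$, $T_{\bar x}((f_j)_j)=((f_j(x_n))_j)_n$, and invoking the closed graph theorem, and it proves definiteness by exactly the same test against $(f,0,0,\dots)$. The paper then declares that the remaining norm axioms and completeness follow from ``canonical arguments'', so your detailed Cauchy-sequence argument via lower semicontinuity of $\|\cdot\|_\lambda^s$ simply fills in what the paper omits rather than departing from it.
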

	\begin{proof}
		To prove the finiteness of 	$\|(x_n)_{n}\|_{\lambda}^{mid}$,  consider the linear map  corresponding to $\bar{x} = (x_n)_{n}\in\lambda^{mid}(X)$, $T_{\bar{x}}:\lambda^{w^*}(X^*)\rightarrow\lambda^{s}(\lambda)$ defined as  $T_{\bar{x}}((f_{j})_{j})=\left((f_{j}(x_{n}))_{j}\right)_{n}$. A straightforward application of Closed Graph theorem proves that $T_{\bar{x}}$ is bounded and hence its norm $\|T_{\bar{x}}\|=\|(x_n)_{n}\|_{\lambda}^{mid}$ is finite.
		\\For proving that $\|(x_n)_{n}\|_{\lambda}^{mid}$ is a norm, let $\|(x_n)_{n}\|_{\lambda}^{mid}=0$. Then for each $(f_{j})_{j}\in\lambda^{w^*}(X^*)$, $\left\|\left(\left\|\left(f_{j}(x_{n})\right)_{j}\right\|_{\lambda}\right)_{n}\right\|_{\lambda}=0$. Since $\|\cdot\|_{\lambda}$ is a norm, we have
		$$f_{j}(x_{n})=0, ~~\forall~~n,j\in\mathbb{N}.$$
		If we consider $(f,0,0,\cdots)\in\lambda^{w^*}(X^*)$ corresponding to each $f\in X^*$, we get $f(x_{n})=0~~\forall n$, and thereby proving that $x_{n}=0$ for each $n\in\mathbb{N}$.
		Remaining properties of the norm follows easily and canonical
		arguments prove the completeness of the space $\lambda^{mid}(\cdot)$.
	\end{proof}
	Therefore, the following chain is easily verified.
	\begin{pro}
		Let $X$ be a Banach space and  $(\lambda,\|\cdot\|_{\lambda})$ be a normal $AK-BK$ space. Then 
		$$\lambda^{s}(X)\overset{1}{\hookrightarrow} \lambda^{mid}(X)\overset{1}{\hookrightarrow} \lambda^{w}(X).$$
	\end{pro}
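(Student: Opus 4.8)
The two set-theoretic inclusions $\lambda^{s}(X)\subseteq\lambda^{mid}(X)$ and $\lambda^{mid}(X)\subseteq\lambda^{w}(X)$ have already been established in the two preceding propositions, so the plan is only to verify that the corresponding inclusion maps are contractions, and in fact metric injections of norm one. Concretely, it suffices to prove the two norm estimates
\begin{equation*}
\|(x_{n})_{n}\|_{\lambda}^{mid}\leq\|(x_{n})_{n}\|_{\lambda}^{s}\quad\text{for }(x_{n})_{n}\in\lambda^{s}(X),
\end{equation*}
and
\begin{equation*}
\|(x_{n})_{n}\|_{\lambda}^{w}\leq\|(x_{n})_{n}\|_{\lambda}^{mid}\quad\text{for }(x_{n})_{n}\in\lambda^{mid}(X),
\end{equation*}
and then to check that the constant $1$ is sharp.

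For the first inequality I would reuse the pointwise bound obtained in the proof that $\lambda^{s}(X)\subseteq\lambda^{mid}(X)$: for every $(f_{j})_{j}\in\lambda^{w^{*}}(X^{*})$ one has $\|(f_{j}(x_{n}))_{j}\|_{\lambda}\leq\|x_{n}\|\,\|(f_{j})_{j}\|_{\lambda}^{w^{*}}$ for each $n$. Hence if $(f_{k})_{k}\in B_{\lambda^{w^{*}}(X^{*})}$, then the scalar sequence $\big(\|(f_{k}(x_{n}))_{k}\|_{\lambda}\big)_{n}$ is dominated coordinatewise by $(\|x_{n}\|)_{n}\in\lambda$; since $\lambda$ is normal and is equipped with a monotone norm, the $\lambda$-norm of the former is at most $\|(\|x_{n}\|)_{n}\|_{\lambda}=\|(x_{n})_{n}\|_{\lambda}^{s}$. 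Taking the supremum over $(f_{k})_{k}\in B_{\lambda^{w^{*}}(X^{*})}$ yields the estimate.

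For the second inequality, fix $f\in B_{X^{*}}$ with $f\neq 0$ and consider the ``one-point'' functional sequence $g=(f,0,0,\dots)\in\lambda^{w^{*}}(X^{*})$; this indeed belongs to $\lambda^{w^{*}}(X^{*})$, since $(g_{k}(x))_{k}=f(x)e_{1}\in\phi\subseteq\lambda$ for every $x$. A direct computation gives $\|g\|_{\lambda}^{w^{*}}=\sup_{x\in B_{X}}|f(x)|\,\|e_{1}\|_{\lambda}=\|f\|\,\|e_{1}\|_{\lambda}$, so $\tilde g:=g/(\|f\|\,\|e_{1}\|_{\lambda})\in B_{\lambda^{w^{*}}(X^{*})}$. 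For this $\tilde g$ we have $(\tilde g_{k}(x_{n}))_{k}=\|f\|^{-1}\|e_{1}\|_{\lambda}^{-1}f(x_{n})e_{1}$, whence $\|(\tilde g_{k}(x_{n}))_{k}\|_{\lambda}=|f(x_{n})|/\|f\|$; note that the factor $\|e_{1}\|_{\lambda}$, which is not assumed to be $1$, cancels. Consequently $\|(x_{n})_{n}\|_{\lambda}^{mid}\geq\big\|(|f(x_{n})|/\|f\|)_{n}\big\|_{\lambda}=\|(f(x_{n}))_{n}\|_{\lambda}/\|f\|\geq\|(f(x_{n}))_{n}\|_{\lambda}$, and taking the supremum over $f\in B_{X^{*}}$ gives $\|(x_{n})_{n}\|_{\lambda}^{w}\leq\|(x_{n})_{n}\|_{\lambda}^{mid}$.

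Finally, to see that each embedding constant equals $1$, I would test both norms on a rank-one sequence $(x,0,0,\dots)$ with $\|x\|=1$, using a norming functional of $x$ scaled by $\|e_{1}\|_{\lambda}^{-1}$ inside the mid-norm; a short computation shows $\|(x,0,\dots)\|_{\lambda}^{s}=\|(x,0,\dots)\|_{\lambda}^{mid}=\|(x,0,\dots)\|_{\lambda}^{w}=\|e_{1}\|_{\lambda}$, so neither inclusion is a strict contraction. There is no substantial obstacle here; the only point demanding care is the bookkeeping with the normalising constant $\|e_{1}\|_{\lambda}$, which must be tracked carefully so that it cancels and the embedding norms come out equal to $1$ rather than to $\|e_{1}\|_{\lambda}$ or its reciprocal.
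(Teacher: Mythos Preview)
Your proposal is correct and is precisely the natural verification that the paper leaves implicit: the paper gives no proof beyond the sentence ``the following chain is easily verified,'' relying on the two preceding propositions for the set inclusions and leaving the norm estimates to the reader. Your argument carries out exactly those estimates, reusing the pointwise bound $\|(f_{j}(x_{n}))_{j}\|_{\lambda}\leq\|x_{n}\|\,\|(f_{j})_{j}\|_{\lambda}^{w^{*}}$ from the first proposition and the one-coordinate functional sequence $(f,0,0,\dots)$ from the second. One small terminological remark: the phrase ``metric injections'' is stronger than what you actually prove (and stronger than what $\overset{1}{\hookrightarrow}$ asserts); you establish that the inclusions are norm-$1$ contractions, not isometric embeddings, which is all that is required.
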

	
	The following result immediately follows from the relation $\lambda^{w^*}(X^*)\cong\mathcal{L}(X,\lambda)$.
	\begin{pro}\label{operator definition}
		Let $X$ be a Banach space, and $(\lambda,\|\cdot\|_{\lambda})$ be a normal $AK-BK$  sequence space. Then $(x_{j})_{j}\in\lambda^{mid}(X)$ if and only if $(Tx_{j})_{j}\in\lambda^{s}(\lambda)$ for every $T\in\mathcal{L}(X,\lambda)$.
	\end{pro}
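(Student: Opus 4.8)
The plan is to read off the statement directly from the identification $\lambda^{w^{*}}(X^{*})\cong\mathcal{L}(X,\lambda)$ recorded in \eqref{lamba weak star}. First I would recall explicitly how this isomorphism acts: a sequence $(f_{j})_{j}\in\lambda^{w^{*}}(X^{*})$ corresponds to the operator $T\in\mathcal{L}(X,\lambda)$ given by $Tx=(f_{j}(x))_{j}$, and conversely an operator $T\in\mathcal{L}(X,\lambda)$ corresponds to the sequence $(f_{j})_{j}$ with $f_{j}:=P_{j}\circ T$, where $P_{j}\colon\lambda\to\mathbb{K}$ is the $j$-th coordinate functional. Since $\lambda$ is a $BK$-space, each $P_{j}$ is continuous, so $f_{j}\in X^{*}$; and since $Tx\in\lambda$ for every $x$, one has $(f_{j})_{j}\in\lambda^{w^{*}}(X^{*})$. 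Thus the two descriptions are genuinely interchangeable, and this correspondence is a bijection between $\lambda^{w^{*}}(X^{*})$ and $\mathcal{L}(X,\lambda)$.

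Next, for a fixed sequence $(x_{j})_{j}\subset X$ and either direction of the correspondence, one has the pointwise identity $(f_{k}(x_{n}))_{k}=Tx_{n}$ for every $n$, so that $\big((f_{k}(x_{n}))_{k}\big)_{n}=(Tx_{n})_{n}$ as sequences in $\lambda$. Hence the defining condition of $\lambda^{mid}(X)$, namely that $\big((f_{k}(x_{n}))_{k}\big)_{n}\in\lambda^{s}(\lambda)$ for all $(f_{k})_{k}\in\lambda^{w^{*}}(X^{*})$, is literally the same requirement as $(Tx_{n})_{n}\in\lambda^{s}(\lambda)$ for all $T\in\mathcal{L}(X,\lambda)$. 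Running the correspondence in one direction gives the ``only if'' part and in the other direction the ``if'' part, so both implications are obtained at once. If desired, one may add the isometric refinement $\|(x_{n})_{n}\|_{\lambda}^{mid}=\sup\{\|(Tx_{n})_{n}\|_{\lambda}^{s}:T\in B_{\mathcal{L}(X,\lambda)}\}$, which follows because \eqref{lamba weak star} is an isometric isomorphism.

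I do not expect a real obstacle in this argument; it is essentially a change of viewpoint. The only points that need a word of justification are that the isomorphism \eqref{lamba weak star} is available, which is why the hypothesis that $\lambda$ be a normal $AK$-$BK$ space is imposed, and that the coordinate functionals $P_{j}$ are bounded, which is again exactly the $BK$ hypothesis; everything else is formal.
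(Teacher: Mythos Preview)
Your proposal is correct and is precisely the approach the paper takes: the paper does not even write out a proof, stating only that the result ``immediately follows from the relation $\lambda^{w^{*}}(X^{*})\cong\mathcal{L}(X,\lambda)$,'' and your argument is exactly the unpacking of that sentence. The isometric refinement you mention at the end is also recorded in the paper immediately after the proposition.
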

	Moreover we have,
	$$\|(x_n)_{n}\|_{\lambda}^{mid}=\sup_{(f_{k})_{k}\in B_{\lambda^{w^*}(X^*)}}\left\|\left((f_{k}(x_{n})_{k})_{n}\right)\right\|_{\lambda}^{s}=\sup_{T\in B_{\mathcal{L}(X,\lambda)}}\|(Tx_{j})_{j}\|_{\lambda}^{s}.$$
	\begin{re}
		In \cite{Fourie 23}, Fourie and Zeekoi defined the space of operator $[Y,E]$-summable sequences in a Banach space $X$ as
		$$Y_{E}(X)=\{(x_{n})_{n}\subset X:\sum\limits_{n=1}^{\infty}\|Tx_{n}\|e_{n}<\text{ converges in } E,~\forall~T\in\mathcal{L}(X,Y)\}$$
		where $Y$ is a non trivial Banach space and $E$ is a Banach space with $1$-unconditional basis $\{e_{n}:n\in\mathbb{N}\}$.
		Note that, when $\|e_{n}\|_{\lambda}=1~~\forall n$,  the sequence space $\lambda^{mid}(X)$ coincides with the space of operator $[Y,E]$ summable sequences for $Y=E=\lambda$.
	\end{re}
Recall from \cite{MSnip}
\begin{Definition}
	The norm, $\|\cdot\|_{\lambda}$, of a sequence space $\lambda$ is said to have the norm iteration property if  for each sequence $(\bar{\alpha}_{n})_{n}=((\alpha_{j}^{n})_{j})_{n}$ in $\lambda^{s}(\lambda)$, then the sequences $\bar{\alpha}_{j}=(\alpha_{j}^{n})_{n}\in\lambda, \forall j\in\mathbb{N}$, $(\bar{\alpha}_{j})_{j}\in\lambda^{s}(\lambda)$
	  and $\|(\bar{\alpha}_{n})_{n}\|_{\lambda}^{s}=\|(\bar{\alpha}_{j})_{j}\|_{\lambda}^{s}.$
\end{Definition}
One can note the significant role played by the norm iteration property of sequence spaces on the theory of $\lambda^{mid}(\cdot)$, particularly due to its relationship with the space $\lambda^{s}(\lambda)$. Hence, we seek to establish a broad result concerning this property of sequence spaces.
	\begin{thm}\label{nip}
		Let $(\lambda,\|\cdot\|_{\lambda})$ be a perfect $BK$-space. Then $\|\cdot\|_{\lambda}$ has the norm iteration property.
	\end{thm}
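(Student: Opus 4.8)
The plan is to keep the matrix picture throughout: write $\alpha_{j}^{n}$ for the $j$-th coordinate of $\bar\alpha_{n}\in\lambda$, so that $(\bar\alpha_{n})_{n}\in\lambda^{s}(\lambda)$ says precisely that every row $\bar\alpha_{n}=(\alpha_{j}^{n})_{j}$ lies in $\lambda$ and that the sequence of row-norms $(\|\bar\alpha_{n}\|_{\lambda})_{n}$ lies in $\lambda$; what must be produced is the analogous statement for the columns $\bar\alpha_{j}=(\alpha_{j}^{n})_{n}$, together with $\|(\|\bar\alpha_{n}\|_{\lambda})_{n}\|_{\lambda}=\|(\|\bar\alpha_{j}\|_{\lambda})_{j}\|_{\lambda}$. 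Since $\lambda$ is perfect it is normal, and (taking $\|\cdot\|_{\lambda}$ to be the K\"othe-bidual norm) one has $\|x\|_{\lambda}=\sup_{\beta\in B_{\lambda^{\times}}}\sum_{k}|x_{k}\beta_{k}|$ for $x\in\lambda$; consequently $\|x\|_{\lambda}=\lim_{N}\|x^{(N)}\|_{\lambda}$ for the finite sections $x^{(N)}=(x_{1},\dots,x_{N},0,0,\dots)$, and a scalar sequence $x$ belongs to $\lambda$ exactly when $\sup_{N}\|x^{(N)}\|_{\lambda}<\infty$. By normality we may assume all $\alpha_{j}^{n}\ge 0$. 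That the columns lie in $\lambda$ is then immediate: for fixed $j$ and every $n$, monotonicity gives $\|e_{j}\|_{\lambda}\,\alpha_{j}^{n}=\|\alpha_{j}^{n}e_{j}\|_{\lambda}\le\|\bar\alpha_{n}\|_{\lambda}$, so $\bar\alpha_{j}$ is coordinatewise dominated by $\|e_{j}\|_{\lambda}^{-1}(\|\bar\alpha_{n}\|_{\lambda})_{n}\in\lambda$, whence $\bar\alpha_{j}\in\lambda$.

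For the norm identity I would expand both sides via the bidual representation:
\[
\|(\bar\alpha_{n})_{n}\|_{\lambda}^{s}=\Big\|\big(\|\bar\alpha_{n}\|_{\lambda}\big)_{n}\Big\|_{\lambda}=\sup_{\beta\in B_{\lambda^{\times}}}\sum_{n}\beta_{n}\|\bar\alpha_{n}\|_{\lambda}=\sup_{\beta\in B_{\lambda^{\times}}}\sum_{n}\beta_{n}\sup_{\gamma\in B_{\lambda^{\times}}}\sum_{j}\gamma_{j}\alpha_{j}^{n},
\]
and, using that the columns already lie in $\lambda$,
\[
\sup_{\gamma\in B_{\lambda^{\times}}}\sum_{j}\gamma_{j}\sup_{\beta\in B_{\lambda^{\times}}}\sum_{n}\beta_{n}\alpha_{j}^{n}=\sup_{\gamma\in B_{\lambda^{\times}}}\sum_{j}\gamma_{j}\|\bar\alpha_{j}\|_{\lambda}=\sup_{N}\Big\|\big(\|\bar\alpha_{j}\|_{\lambda}\big)_{j\le N}\Big\|_{\lambda},
\]
the last quantity being $\|(\|\bar\alpha_{j}\|_{\lambda})_{j}\|_{\lambda}$ when $(\|\bar\alpha_{j}\|_{\lambda})_{j}\in\lambda$ and $+\infty$ otherwise. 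The plan is to show that both iterated suprema coincide with the manifestly transpose-symmetric quantity $S:=\sup_{\beta,\gamma\in B_{\lambda^{\times}}}\sum_{n,j}\beta_{n}\gamma_{j}\alpha_{j}^{n}$. The inequalities $S\le\|(\bar\alpha_{n})_{n}\|_{\lambda}^{s}$ (so in particular $S<\infty$) and $S\le\sup_{N}\|(\|\bar\alpha_{j}\|_{\lambda})_{j\le N}\|_{\lambda}$ are immediate. Granting the two reverse inequalities, $S$ is finite and equals both iterated suprema, so $(\|\bar\alpha_{j}\|_{\lambda})_{j}\in\lambda$ by the finite-section criterion, hence $(\bar\alpha_{j})_{j}\in\lambda^{s}(\lambda)$, and $\|(\bar\alpha_{n})_{n}\|_{\lambda}^{s}=S=\|(\bar\alpha_{j})_{j}\|_{\lambda}^{s}$.

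The step I expect to be the obstacle is passing from $\sup_{\beta}\sum_{n}\beta_{n}\sup_{\gamma}(\cdot)$ down to $S$ (and symmetrically with $\beta$ and $\gamma$ interchanged): this asks, for a prescribed $\beta$ and $\varepsilon>0$, for a single $\gamma\in B_{\lambda^{\times}}$ that is nearly $\beta$-weighted optimal for all rows at once, which does not follow from positivity alone. My intended route is truncation: restrict to the finite block $(\alpha_{j}^{n})_{n\le N,\,j\le M}$, where the whole computation takes place in the finite-dimensional spaces spanned by $e_{1},\dots,e_{\max(N,M)}$; establish the transpose-invariance of the iterated norm there; and then let $N,M\to\infty$ using $\|x\|_{\lambda}=\lim_{N}\|x^{(N)}\|_{\lambda}$ together with continuity of the norm in finitely many coordinates. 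The finite-dimensional transpose-invariance is the heart of the matter, and it is there that one must exploit more about $\|\cdot\|_{\lambda}$ than mere normality; for $\ell_{p}$ and the symmetric spaces quoted in the introduction it is transparent, since there $\lambda^{s}(\lambda)$ is isometrically a sequence space over $\mathbb{N}\times\mathbb{N}$ on which transposition is a mere reindexing, and I would aim to carry out the perfect-$BK$ argument so as to reduce to that picture.
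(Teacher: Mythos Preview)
Your route through the ``joint'' quantity
\[
S=\sup_{\beta,\gamma\in B_{\lambda^{\times}}}\sum_{n,j}\beta_{n}\gamma_{j}\,|\alpha_{j}^{n}|
\]
does not work: the reverse inequality $\|(\bar\alpha_{n})_{n}\|_{\lambda}^{s}\le S$ that you say you will obtain by truncation is simply false. Take $\lambda=\ell_{2}$ (so $\lambda^{\times}=\ell_{2}$) and the $2\times 2$ identity matrix $\alpha_{j}^{n}=\delta_{jn}$. Then each row has $\ell_{2}$-norm $1$, so $\|(\bar\alpha_{n})_{n}\|_{\lambda}^{s}=\|(1,1)\|_{2}=\sqrt{2}$, whereas
\[
S=\sup_{\|\beta\|_{2},\|\gamma\|_{2}\le 1}\ (\beta_{1}\gamma_{1}+\beta_{2}\gamma_{2})=1
\]
by Cauchy--Schwarz. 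Thus $S<\|(\bar\alpha_{n})_{n}\|_{\lambda}^{s}$ already on a finite block, so no amount of truncation will produce the missing inequality. The point you correctly isolate---that one would need a \emph{single} $\gamma\in B_{\lambda^{\times}}$ nearly optimal for all rows at once---is not a technical obstacle to be overcome; it is genuinely impossible in general, because the optimal dual element for $\bar\alpha_{n}$ varies with $n$. The quantity $S$ is the wrong intermediary: in the iterated norm $\sup_{\gamma}\sum_{n}\gamma_{n}\sup_{\beta}\sum_{j}\beta_{j}|\alpha_{j}^{n}|$ the inner supremum is over a $\beta$ that may depend on $n$, and collapsing this to a single $\beta$ loses information.

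For what it is worth, the paper's own proof follows exactly the same route and glides over exactly the same step: it writes
\[
\|(\bar\alpha_{n})_{n}\|_{\lambda}^{s}
=\sup_{\gamma\in B_{\lambda^{\times}}}\sum_{n}|\gamma_{n}|\sup_{\beta\in B_{\lambda^{\times}}}\sum_{j}|\alpha_{j}^{n}\beta_{j}|
=\sup_{\gamma\in B_{\lambda^{\times}}}\sup_{\beta\in B_{\lambda^{\times}}}\sum_{n}\sum_{j}|\alpha_{j}^{n}\beta_{j}\gamma_{n}|,
\]
and then invokes an interchange-of-suprema lemma; but the second equality above is precisely the move $\sum_{n}\gamma_{n}\sup_{\beta}(\cdots)=\sup_{\beta}\sum_{n}\gamma_{n}(\cdots)$, which the $\ell_{2}$ example refutes. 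So you have in fact rediscovered the paper's argument and, unlike the paper, noticed that its central step is unjustified. A correct proof must avoid the symmetric bilinear quantity $S$ altogether and compare the two iterated norms directly.
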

	\begin{proof}
		Since the identity map from $(\lambda,\|\cdot\|_{\lambda})$ to  $(\lambda,\|\cdot\|_{\lambda^{\times\times}})$ is continuous, $\|\cdot\|_{\lambda}$ and $ \|\cdot\|_{\lambda^{\times\times}}$ are equivalent by \cite[Corollary 1.6.8]{Megg}. Therefore, for $(\alpha_{n})_{n}\in\lambda$, we have 
		$$\|(\alpha_{n})_{n}\|_{\lambda}=\sup_{(\beta_{n})_{n}\in B_{\lambda^{\times}}}\sum_{n=1}^{\infty}|\alpha_{n}\beta_{n}|.$$
		To prove that $\|\cdot\|_{\lambda}$ has the norm iteration property, consider $(\bar{\alpha}_{n})_{n}=((\alpha_{j}^{n})_{j})_{n}$ in $\lambda^{s}(\lambda)$. Clearly,
		$\sum\limits_{n=1}^{\infty}\sum\limits_{j=1}^{\infty}|\alpha_{j}^{n}\beta_{j}\gamma_{n}|<\infty$ for $(\beta_{j})_{j},(\gamma_{n})_{n}\in\lambda^{\times}$ and $\sum\limits_{n=1}^{\infty}\sum\limits_{j=1}^{\infty}|\alpha_{j}^{n}\beta_{j}\gamma_{n}|=\sum\limits_{j=1}^{\infty}\sum\limits_{n=1}^{\infty}|\alpha_{j}^{n}\beta_{j}\gamma_{n}|$. Therefore for $(\beta_{j})_{j}=e_{1}$, $$\sum\limits_{n=1}^{\infty}\sum\limits_{j=1}^{\infty}|\alpha_{j}^{n}\beta_{j}\gamma_{n}|=\sum\limits_{n=1}^{\infty}|\alpha_{j}^{n}\gamma_{n}|<\infty.$$ Thus, $\bar{\alpha}_{j}=(\alpha_{j}^{n})_{n}\in\lambda$ for each $j\in\mathbb{N}$.

		To show that $(\bar{\alpha_{j}})_{j}\in\lambda^{s}(\lambda)$, consider $(\beta_{j})_{j}\in\lambda^{\times}$. Then for $m\in\mathbb{N}$,
		\begin{align*}
			\sum_{j=1}^{m}\|\bar{\alpha_{j}}\|_{\lambda}|\beta_{j}|&=\sum_{j=1}^{m}\left(\sup_{(\gamma_{n})_{n}\in B_{\lambda^\times}}\sum_{n=1}^{\infty}|\alpha_{j}^{n}\gamma_{n}|\right)|\beta_{j}|=\lim_{k\rightarrow\infty}\sup_{(\gamma_{n})_{n=1}^{k}\in B_{\lambda^\times}}\sum_{j=1}^{m}\sum_{n=1}^{k}|\alpha_{j}^{n}\beta_{j}\gamma_{n}|\\
			&= \lim_{k\rightarrow\infty}\sup_{(\gamma_{n})_{n=1}^{k}\in B_{\lambda^\times}}\sum_{n=1}^{k}\sum_{j=1}^{m}|\alpha_{j}^{n}\beta_{j}\gamma_{n}|=\sup_{(\gamma_{n})_{n}\in B_{\lambda^\times}}\sum_{n=1}^{\infty}\left(\sum_{j=1}^{m}|\alpha_{j}^{n}\beta_{j}|\right)|\gamma_{n}|\\
			&\leq \sup_{(\gamma_{n})_{n}\in B_{\lambda^\times}}\sum_{n=1}^{\infty}\left(\|\bar{\alpha_{n}}\|_{\lambda}\|(\beta_{j})_{j}\|_{\lambda^{\times}}\right)|\gamma_{n}|
			\leq \|(\beta_{j})_{j}\|_{\lambda^{\times}}\|(\bar{\alpha_{n}})_{n}\|_{\lambda}^{s}.
		\end{align*}
		Hence $(\bar{\alpha_{j}})_{j}\in\lambda^{s}(\lambda)$.
		Also, we have 
		$$\|(\bar{\alpha_{n}})_{n}\|_{\lambda}=\sup_{(\gamma_{n})_{n}\in B_{\lambda^\times}}\sum_{n=1}^{\infty}\left(\sup_{(\beta_{j})_{j}\in B_{\lambda^\times}}\sum_{j=1}^{\infty}|\alpha_{j}^{n}\beta_{j}|\right)|\gamma_{n}|=\sup_{(\gamma_{n})_{n}\in B_{\lambda^\times}}\sup_{(\beta_{j})_{j}\in B_{\lambda^\times}}\sum_{n=1}^{\infty}\sum_{j=1}^{\infty}|\alpha_{j}^{n}\beta_{j}\gamma_{n}|.$$
		Similarly,
		$$\|(\bar{\alpha_{j}})_{j}\|_{\lambda}=\sup_{(\beta_{j})_{j}\in B_{\lambda^\times}}\sup_{(\gamma_{n})_{n}\in B_{\lambda^\times}}\sum_{j=1}^{\infty}\sum_{n=1}^{\infty}|\alpha_{j}^{n}\beta_{j}\gamma_{n}|.$$
		It is clear that the sequence $((\alpha_{j}^{n}\beta_{j}\gamma_{n})_{j})_{n}\in\ell_{1}^{s}(\ell_{1})$ and hence $\sum\limits_{n=1}^{\infty}\sum\limits_{j=1}^{\infty}|\alpha_{j}^{n}\beta_{j}\gamma_{n}|=\sum\limits_{j=1}^{\infty}\sum\limits_{n=1}^{\infty}|\alpha_{j}^{n}\beta_{j}\gamma_{n}|$ using the norm iteration property of the norm of $\ell_{1}$.
		\\Applying \cite[Lemma 4.2]{alee} to the bounded real valued  function $g:\lambda^{\times}\times\lambda^{\times}\rightarrow\mathbb{K}$ defined as $g((\beta_{j})_{j},(\gamma_{n})_{n})=\sum\limits_{j=1}^{\infty}\sum\limits_{n=1}^{\infty}|\alpha_{j}^{n}\beta_{j}\gamma_{n}|$ gives the norm iteration property of $\|\cdot\|_{\lambda}$.
	\end{proof}
Hence we have the following sequence spaces with its norm having the norm iteration property other than $\ell_{p}$:
\begin{itemize}
	\item \textbf{Orlicz sequence space $\ell_{M}$}\cite[p.297]{kamgup}, where $M$ is an Orlicz function satisfying $\Delta_{2}$ condition defined as 
	$\{\bar{\alpha}\in\omega:\sum_{j=1}^{\infty}M\left(\frac{|\alpha_{j}|}{k}\right)<\infty\text{ for some }k>0\}$ with respect to the norm $$\|(\alpha_{j})_{j}\|_{(M)}=\inf\left\{k>0:\sum_{j=1}^{\infty}M\left(\frac{|\alpha_{j}|}{k}\right)\leq 1\right\}.$$
	\item  \textbf{Modular sequence spaces $\ell_{M_j}$}\cite[p.319]{kamgup}, where ${M_{j}}$ is a sequence of Orlicz functions defined as 
	$\{\bar{\alpha}\in\omega:\sum_{j=1}^{\infty}M_{j}\left(\frac{|\alpha_{j}|}{k}\right)<\infty\text{ for some }k>0\}$ with respect to the norm, $$\|(\alpha_{j})_{j}\|_{(M_{j})}=\inf\left\{k>0:\sum_{j=1}^{\infty}M_{j}\left(\frac{|\alpha_{j}|}{k}\right)\leq 1\right\}.$$

	\item For $1\leq p<\infty$, \textbf{Lorentz sequence spaces order $p$}\cite[p.323]{kamgup} defined as $d(x,p)=\{\bar{\alpha}\in c_{0}:\sup\{\sum_{j=1}^{\infty}x_{j}|\alpha_{\sigma(j)}|^{p}:\sigma\in\Pi\}<\infty\}$, where $x=(x_{j})_{j}\in c_{0}, x\notin\ell_{1}$ such that $x_{j}>0~~ \forall j$ and $1=x_{1}\geq x_{2}\geq\cdots$ endowed with the norm
	$$\|\bar{\alpha};p\|=\sup\{\sum_{j=1}^{\infty}x_{j}|\alpha_{\sigma(j)}|^{p}:\sigma\in\Pi\}.$$
		\item \textbf{The spaces $\mu_{a,p}$ and $\nu_{a,p}$} introduced by Garling in \cite{Garling}. Let $\bar{\alpha}=(\alpha_{j})_{j}\in c_{0}$. Then $\widehat{\bar{\alpha}}=(\widehat{\alpha_{1}},\widehat{\alpha_{2}},\cdots\widehat{\alpha_{j}},\cdots)$ where $\widehat{\alpha_{j}}=\inf\limits_{A\subset\mathbb{Z}^{+}, |A|<j}\left(\sup\limits_{j\notin A}|\alpha_{j}|\right)$. If $X$ is a symmetric sequence space, $X^{++}=\{\bar{\alpha}\in X:\bar{\alpha}=\widehat{\bar{\alpha}}\}$. For $1\leq p\leq\infty$, $q$ denotes the conjugate of $p$ and $M_{q}=(\ell_{q})^{++}\cap B_{\ell_{q}}$. Suppose $(a_{j})_{j}\in c_{0}^{++}$ such that $(a_{j})_{j}\notin\ell_{1}$ and $b_{j}=a_{j}^{1/p}$. Then 
	$$\mu_{a,p}=\left\{\bar{\alpha}\in c_{0}:\sum_{j=1}^{\infty}\widehat{\alpha_{j}}^{p}a_{j}<\infty\right\}$$ with respect to the norm,
	$$\|\bar{\alpha}\|_{a,p}=\left(\sum_{j=1}^{\infty}\widehat{\alpha_{j}}^{p}a_{j}\right)^{1/p}$$
	and $$\nu_{a,p}=\left\{\bar{\beta}=(\beta_{j})_{j}:\sup_{n}\frac{\sum_{j=1}^{n}\widehat{\beta_{j}}}{\sum_{j=1}^{n}k_{j}b_{j}}<\infty\text{ for some }(k_{j})_{j}\in M_{q}\right\}$$ with respect to the norm
	$$\||\bar{\beta}|\|_{a,p}=\inf_{(k_{j})_{j}\in M_{q}}\sup_{n}\frac{\sum\limits_{j=1}^{n}\widehat{\beta_{j}}}{\sum\limits_{j=1}^{n}k_{j}b_{j}}$$
	are reflexive symmetric $BK$-spaces which are Kothe duals of each other. 
	\item \textbf{The sequence spaces $m(\bar{\phi})$} and $n(\bar{\phi})$ introduced by Sargent in \cite{Sargent}. For $\bar{\alpha}=(\alpha_{j})_{j}$, define the sequence $\Delta_{\bar{\alpha}} = (\alpha_{j}-\alpha_{j-1})$, $\alpha_{0} = 0$; $S(\bar{\alpha})$ denotes the
	collection of all sequences which are permutations of $\bar{\alpha}$. $\mathfrak{C}$ is the set of all finite
	sequences of positive integers. For $\sigma\in\mathfrak{C}$ define $c(\sigma)=(c_{j}(\sigma))$, where $c_{j}(\sigma)= 1$
	if $j\in\sigma$ and $0$, otherwise. Let $\mathfrak{C}_{s}=\{\sigma\in\mathfrak{C}:\sum_{j}^{\infty} c_{j}(\sigma)\leq s\}$.
	Let $\bar{\phi}=(\phi_{j})$ is a given (fixed) sequence such that for each $j$, $0<\phi_{1}\leq\phi_{j}\leq\phi_{j+1}$
	and $(j+1)\phi_{j}>j\phi_{j+1}$. Then sequence spaces
	$$m(\bar{\phi})=\left\{\bar{\alpha}:\|\bar{\alpha}\|=\sup_{s\geq 1}\sup_{\sigma\in\mathfrak{C}_{s}}\left(\frac{1}{\phi_{s}}\sum_{j\in\sigma}|\alpha_{j}|\right)<\infty\right\}$$ and
	$$n(\bar{\phi})=\left\{\bar{\alpha}:\|\bar{\alpha}\|=\sup_{u\in S(\bar{\alpha})}\sum_{j}|u_{j}|\Delta_{\phi_{j}}<\infty\right\}$$	are perfect $BK$-spaces which are Kothe duals of each other. 
\end{itemize}
\begin{re}
	Note that the converse of Theorem \ref{nip} need not be true. For instance,  the sequence space $c_{0}$, comprising all sequences converging to zero with the supremum norm has the norm iteration property without being a perfect sequence space. 
\end{re}

	In \cite{alee}, the authors introduced and studied the concept of $\lambda$-limited sets and operators in Banach spaces.
	\begin{Definition}
		Let $\lambda$ be a sequence space. 
		\begin{itemize}
			\item Then a subset $A$ of a Banach space $X$ is said to be $\lambda$-limited, if for each $(f_{n})_{n}\in \lambda^{w^{*}}(X^{*})$, there exists a sequence $(\alpha_{n})_{n}\in\lambda$ such that
			$$\left|f_{n}(x)\right|\leq\alpha_{n}, ~~~~~\forall~ n\in\mathbb{N}, x\in A.$$
			\item  An operator $T:X\rightarrow Y$ is said to be $\lambda$-limited if $T(B_{X})$ is $\lambda$-limited in $Y$.
		\end{itemize}
	\end{Definition}
	The next result establishes the relation between $\lambda$-limited operators and $\lambda^{mid}(\cdot)$.
	\begin{pro}
		Let $X$ be a Banach space, and $(\lambda,\|\cdot\|_{\lambda})$ be a perfect $AK-BK$ space. Then 
		$\bar{x}=(x_{n})_{n}\in\lambda^{mid}(X)$ if and only if
		%\item $(Tx_{j})_{j}\in\lambda^{s}(\lambda)$ for each $T\in\mathcal{L}(X,\lambda)$
		the operator $E_{\bar{x}}:\lambda^{\times}\rightarrow X$ defined as $E_{\bar{x}}((\alpha_{n})_{n})=\sum\limits_{n=1}^{\infty}\alpha_{n}x_{n}$ is $\lambda$-limited.
		
	\end{pro}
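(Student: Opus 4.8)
The plan is to rewrite both sides of the equivalence as a statement about membership of the double sequence $\left(f_{i}(x_{k})\right)_{i,k}$ in $\lambda^{s}(\lambda)$, with the roles of the two indices interchanged, and then to bridge the two readings using the norm iteration property, which is available here since a perfect $BK$-space satisfies the hypotheses of Theorem \ref{nip}.

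First I would dispose of well-definedness of $E_{\bar{x}}$. In the forward implication $\bar{x}\in\lambda^{mid}(X)\subseteq\lambda^{w}(X)$, while in the converse the hypothesis ``$E_{\bar{x}}$ is $\lambda$-limited'' already presupposes $E_{\bar{x}}$ is defined, and then its coordinate sequence $(E_{\bar{x}}e_{k})_{k}=(x_{k})_{k}$ lies in $\lambda^{w}(X)$ as for any bounded operator into $X$. So in both cases $\bar{x}\in\lambda^{w}(X)$, and the correspondence between $\lambda^{w}$-sequences and operators on $\lambda^{\times}$ (cf. \eqref{lambda weak}) makes $E_{\bar{x}}((\alpha_{k})_{k})=\sum_{k}\alpha_{k}x_{k}$ a bounded operator $\lambda^{\times}\to X$.

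Next is the key computation. Fix $(f_{i})_{i}\in\lambda^{w^{*}}(X^{*})$. For $\beta=(\beta_{k})_{k}\in B_{\lambda^{\times}}$ and $x=E_{\bar{x}}(\beta)=\sum_{k}\beta_{k}x_{k}$ we have $f_{i}(x)=\sum_{k}\beta_{k}f_{i}(x_{k})$, and since $\lambda^{\times}$ is normal we may absorb signs to get
$$\sup_{x\in E_{\bar{x}}(B_{\lambda^{\times}})}\left|f_{i}(x)\right|=\sup_{\beta\in B_{\lambda^{\times}}}\sum_{k}|\beta_{k}|\,|f_{i}(x_{k})|=\left\|\left(f_{i}(x_{k})\right)_{k}\right\|_{\lambda^{\times\times}},$$
where $\left(f_{i}(x_{k})\right)_{k}\in\lambda=\lambda^{\times\times}$ because $\bar{x}\in\lambda^{w}(X)$. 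Since $\lambda$ is perfect, $\|\cdot\|_{\lambda}$ and $\|\cdot\|_{\lambda^{\times\times}}$ are equivalent, and since $\lambda$ is normal a sequence dominated (up to a scalar) by an element of $\lambda$ again lies in $\lambda$; hence $E_{\bar{x}}$ is $\lambda$-limited if and only if for every $(f_{i})_{i}\in\lambda^{w^{*}}(X^{*})$ one has $\left(\left\|\left(f_{i}(x_{k})\right)_{k}\right\|_{\lambda}\right)_{i}\in\lambda$, that is, $\left(\left(f_{i}(x_{k})\right)_{k}\right)_{i}\in\lambda^{s}(\lambda)$.

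Finally, by definition $\bar{x}\in\lambda^{mid}(X)$ says precisely that for every $(f_{i})_{i}\in\lambda^{w^{*}}(X^{*})$ the sequence $\left(\left(f_{i}(x_{k})\right)_{i}\right)_{k}$ lies in $\lambda^{s}(\lambda)$, which differs from the condition just obtained only in the order of the two indices of $\left(f_{i}(x_{k})\right)_{i,k}$. Because $\lambda$ is a perfect $BK$-space, Theorem \ref{nip} gives the norm iteration property, which is exactly the statement that interchanging the two indices maps $\lambda^{s}(\lambda)$ isometrically into itself, hence (applied twice) onto itself; applying it identifies the two conditions and finishes the proof. The main obstacle is the supremum computation together with the normality-plus-perfectness bookkeeping needed to land genuinely in $\lambda^{s}(\lambda)$ rather than merely in $\lambda^{s}(\lambda^{\times\times})$; granting that, the equivalence is immediate from Theorem \ref{nip}.
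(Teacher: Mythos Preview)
Your proof is correct and takes essentially the same approach as the paper's: both reduce the $\lambda$-limited condition on $E_{\bar{x}}$ to the statement $\bigl(\|(f_{j}(x_{n}))_{n}\|_{\lambda}\bigr)_{j}\in\lambda$ via the supremum computation together with the equivalence of $\|\cdot\|_{\lambda}$ and $\|\cdot\|_{\lambda^{\times\times}}$ for perfect $\lambda$, and then invoke the norm iteration property (Theorem~\ref{nip}) to interchange the two indices. Your write-up packages the two implications into a single ``if and only if'' via the exact supremum identity, whereas the paper argues the directions separately with inequalities, but the ingredients and logical flow are the same.
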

	\begin{proof}
		Let $\bar{x}=(x_{n})_{n}\in\lambda^{mid}(X)$. Since $(\lambda,\|\cdot\|_{\lambda})$ is perfect, $\|\cdot\|_{\lambda}$ has the norm iteration property. Therefore we have, $\left(\|(f_{j}(x_{n}))_{n}\|_{\lambda}\right)_{j}\in\lambda$ and $$\left\|\left(\|(f_{j}(x_{n}))_{n}\|_{\lambda}\right)_{j}\right\|=\left\|\left(\|(f_{j}(x_{n}))_{j}\|_{\lambda}\right)_{n}\right\|$$ for each $(f_{j})_{j}\in\lambda^{w^*}(X^*)$. For $\bar{\alpha}\in B_{\lambda^{\times}}$, 
		\begin{align*}
			|f_{j}(E_{\bar{x}}(\bar{\alpha}))|=\left|f_{j}\left(\sum_{n=1}\alpha_{n}x_{n})\right)\right|&= \left|\sum_{n=1}\alpha_{n}f_{j}(x_{n})\right|\\
			&\leq\|(f_{j}(x_{n}))_{n}\|_{\lambda}\|\|(\alpha_{n})_{n}\|_{\lambda^{\times}}\\
			&\leq\|(f_{j}(x_{n}))_{n}\|_{\lambda}
		\end{align*}
		implies that $E_{\bar{x}}(B_{\lambda^{\times}})$ is a $\lambda$-limited subset of $X$ and hence $E_{\bar{x}}$ is a $\lambda$-limited operator.
		\\Conversly, let $E_{\bar{x}}$ is a $\lambda$-limited operator. Then for each $(f_{j})_{j}\in\lambda^{w^*}(X^*)$, there exists $(\beta_{j})_{j}\in\lambda$ such that
		\begin{align}\label{EX-limited}
			\sup_{(\alpha_{n})_{n}\in B_{\lambda^{\times}}}\left|f_{j}\left(\sum_{n=1}\alpha_{n}x_{n})\right)\right|&\leq\beta_{j}~~~\forall j\in\mathbb{N}\nonumber\\
			\text{which implies} \sup_{(\alpha_{n})_{n}\in B_{\lambda^{\times}}}\left|\sum_{n=1}\alpha_{n}f_{j}(x_{n})\right|&\leq\beta_{j}.
		\end{align}
		Since $\lambda$ is perfect and hence normal, using  \eqref{EX-limited}, $\left(\|(f_{j}(x_{n}))_{n}\|_{\lambda}\right)_{j}\in\lambda$. Using the norm-iteration property of $\|\cdot\|_{\lambda}$, it follows that $(x_{n})_{n}\in\lambda^{mid}(X)$.
	\end{proof}
	In addition, if $\lambda$ is  reflexive, we obtain the following  characterisation for sequences in $\lambda^{mid}(X)$ in terms of absolutely $\lambda$-summing operators.
	\begin{pro}
		Let $X$ be a Banach space, and $(\lambda,\|\cdot\|_{\lambda})$ be a reflexive $AK-BK$ space. Then $(x_n)_{n}\in\lambda^{mid}(X)$ if and only if $\Phi_{\bar{x}}:X^*\rightarrow\lambda$ defined as $\Phi_{\bar{x}}(f)=(f(x_{n}))_{n}$ for $f\in X^*$ is absolutely $\lambda$-summing.
	\end{pro}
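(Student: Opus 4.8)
The plan is to recognise $\Phi_{\bar x}$ as an operator in $\mathcal{L}(X^{*},\lambda)$ and then to move between the two summability conditions by \emph{transposing} the relevant double-indexed sequence; the device that legitimises this transposition is precisely the norm iteration property of $\|\cdot\|_{\lambda}$. Two standing facts will be used. First, since $\lambda$ is reflexive (and $AK$-$BK$) it is perfect, so by Theorem \ref{nip} the norm $\|\cdot\|_{\lambda}$ has the norm iteration property. Second, by Theorem \ref{equality} we have $\lambda^{w}(X^{*})=\lambda^{w^{*}}(X^{*})$, and the two natural norms agree (which holds since $\lambda$ is perfect), so that $B_{\lambda^{w}(X^{*})}=B_{\lambda^{w^{*}}(X^{*})}$; this coincidence is what makes the statement meaningful, since ``$\Phi_{\bar x}$ absolutely $\lambda$-summing'' is phrased through $\lambda^{w}(X^{*})$ while membership in $\lambda^{mid}(X)$ is phrased through $\lambda^{w^{*}}(X^{*})$.

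Before the two implications I would record that $\Phi_{\bar x}$ is a well-defined bounded operator. In the forward direction $(x_{n})_{n}\in\lambda^{mid}(X)\subseteq\lambda^{w}(X)$, so $(f(x_{n}))_{n}\in\lambda$ for every $f\in X^{*}$ and $\|\Phi_{\bar x}(f)\|_{\lambda}=\|(f(x_{n}))_{n}\|_{\lambda}\le\|(x_{n})_{n}\|_{\lambda}^{w}\,\|f\|$, so $\Phi_{\bar x}\in\mathcal{L}(X^{*},\lambda)$; in the converse direction this membership is already part of the hypothesis that $\Phi_{\bar x}$ be absolutely $\lambda$-summing. For $(\Rightarrow)$: take $(f_{j})_{j}\in\lambda^{w}(X^{*})=\lambda^{w^{*}}(X^{*})$. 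By the definition of $\lambda^{mid}(X)$ the double sequence $\big((f_{j}(x_{n}))_{j}\big)_{n}$ lies in $\lambda^{s}(\lambda)$, and applying the norm iteration property to it yields that the transposed sequence $\big((f_{j}(x_{n}))_{n}\big)_{j}=(\Phi_{\bar x}(f_{j}))_{j}$ also lies in $\lambda^{s}(\lambda)$, with the same $\lambda^{s}(\lambda)$-norm. Since $(f_{j})_{j}\in\lambda^{w}(X^{*})$ was arbitrary, $\Phi_{\bar x}$ is absolutely $\lambda$-summing. The implication $(\Leftarrow)$ is the same chain read backwards: for $(f_{j})_{j}\in\lambda^{w^{*}}(X^{*})=\lambda^{w}(X^{*})$, absolute $\lambda$-summability of $\Phi_{\bar x}$ gives $\big((f_{j}(x_{n}))_{n}\big)_{j}\in\lambda^{s}(\lambda)$, and one more application of the norm iteration property transposes this to $\big((f_{j}(x_{n}))_{j}\big)_{n}\in\lambda^{s}(\lambda)$, that is, $(x_{n})_{n}\in\lambda^{mid}(X)$. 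Tracking the norm identities through both directions, together with $B_{\lambda^{w}(X^{*})}=B_{\lambda^{w^{*}}(X^{*})}$, also yields $\pi_{\lambda}(\Phi_{\bar x})=\|(x_{n})_{n}\|_{\lambda}^{mid}$.

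The only genuinely delicate point is the index bookkeeping in the two $\lambda^{s}(\lambda)$ conditions and the fact that interchanging the two orders of summation is permissible \emph{exactly} because $\lambda$ is perfect, i.e.\ exactly because Theorem \ref{nip} is available; without the norm iteration property there would be no reason for the two conditions to match. A secondary point to keep in sight is that reflexivity is used essentially: it is only through reflexivity that $\lambda^{w}(X^{*})$ can be identified with $\lambda^{w^{*}}(X^{*})$, which is what allows the hypothesis on $\Phi_{\bar x}$ (an operator defined on $X^{*}$) to be fed into the definition of $\lambda^{mid}(X)$.
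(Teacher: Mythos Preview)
Your argument is correct and follows essentially the same route as the paper: use the norm iteration property (available via Theorem~\ref{nip} since reflexivity gives perfectness) to transpose the double sequence $((f_{j}(x_{n}))_{j})_{n}\in\lambda^{s}(\lambda)$ into $((f_{j}(x_{n}))_{n})_{j}=(\Phi_{\bar x}(f_{j}))_{j}\in\lambda^{s}(\lambda)$, together with the identification $\lambda^{w}(X^{*})=\lambda^{w^{*}}(X^{*})$ from Theorem~\ref{equality} to reconcile the domains; your write-up is simply more explicit about the bookkeeping and adds the norm identity $\pi_{\lambda}(\Phi_{\bar x})=\|(x_{n})_{n}\|_{\lambda}^{mid}$ as a bonus.
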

	\begin{proof}
		Let $(x_n)_{n}\in\lambda^{mid}(X)$. Then by defnition of $\lambda^{mid}(X)$ and using the norm iteration property of $\|\cdot\|_{\lambda}$, for each $(f_{j})_{j}\in\lambda^{w}(X^*)$, we have 
		\begin{align*}
			\|\Phi_{\bar{x}}((f_{j})_{j}\|_{\lambda}^{s}&=\left\|\left(\left\|\left(f_{j}(x_{n})\right)_{n}\right\|_{\lambda}\right)_{j}\right\|_{\lambda}\\
			&=\left\|\left(\left\|\left(f_{j}(x_{n})\right)_{j}\right\|_{\lambda}\right)_{n}\right\|_{\lambda}\\
			&<\infty.
		\end{align*} 
		Tracing back the proof, the converse follows easily from Proposition \ref{equality}.
	\end{proof}
	\section{Mid $\lambda$-summing operators}
	Extending the notion of mid $p$-summing operators for arbitrary sequence spaces $\lambda$, we introduce the  classes of  absolutely and weakly mid $\lambda$- summing  operators determined by the inclusion
	$$\lambda^{s}(\cdot)\subset \lambda^{mid}(\cdot) \subset \lambda^{w}(\cdot).$$
	\begin{Definition}
		A linear operator $T:X\rightarrow Y$ is said to be absolutely mid $\lambda$-summing if $(Tx_{n})_{n}\in\lambda^{s}(Y)$ whenever $(x_{n})_{n}\in\lambda^{mid}(X)$.
	\end{Definition}
	We denote by the symbol $\Pi_{\lambda}^{mid}(X,Y)$, the space of absolutely  mid $\lambda$-summing operators from $X$ to $Y$.

	\begin{pro}\label{Pi lambda mid Banach}
		Let $X$ and $Y$ be Banach spaces,  and $(\lambda,\|\cdot\|_{\lambda})$ be a normal $AK-BK$  sequence space. Then $\Pi_{\lambda}^{mid}(X,Y)$ is a Banach space endowed with the norm
		$$\pi_{\lambda}^{mid}(T)=\sup_{(x_{n})_{n}\in B_{\lambda^{mid}(X)}}\|(Tx_{j})_{j}\|_{\lambda}^{s}.$$
	\end{pro}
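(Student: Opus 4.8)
The plan is to realise $(\Pi_{\lambda}^{mid}(X,Y),\pi_{\lambda}^{mid})$ as a closed subspace of a space of operators. To each $T\in\Pi_{\lambda}^{mid}(X,Y)$ I associate the linear map $\hat{T}\colon\lambda^{mid}(X)\to\lambda^{s}(Y)$, $\hat{T}((x_{n})_{n})=(Tx_{n})_{n}$, which is well defined precisely because $T$ is absolutely mid $\lambda$-summing. First I would note that the coordinate evaluations are continuous on both $\lambda^{s}(Y)$ and $\lambda^{mid}(X)$: on $\lambda^{s}(Y)$ because normality applied to $\|y_{m}\|e_{m}$ (whose moduli are dominated by $(\|y_{n}\|)_{n}$) gives $\|y_{m}\|\,\|e_{m}\|_{\lambda}\leq\|(y_{n})_{n}\|_{\lambda}^{s}$, and on $\lambda^{mid}(X)$ because the contractive inclusion $\lambda^{mid}(X)\overset{1}{\hookrightarrow}\lambda^{w}(X)$ together with $\|x_{m}\|\,\|e_{m}\|_{\lambda}\leq\|(x_{n})_{n}\|_{\lambda}^{w}$ yields $\|x_{m}\|\,\|e_{m}\|_{\lambda}\leq\|(x_{n})_{n}\|_{\lambda}^{mid}$. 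Since $\lambda^{mid}(X)$ (by the previous proposition) and $\lambda^{s}(Y)$ are Banach spaces and these coordinate maps are continuous, $\hat{T}$ has closed graph: if $\bar{x}^{(k)}\to\bar{x}$ in $\lambda^{mid}(X)$ and $\hat{T}\bar{x}^{(k)}\to\bar{y}$ in $\lambda^{s}(Y)$, then coordinatewise $x_{n}^{(k)}\to x_{n}$, hence $Tx_{n}^{(k)}\to Tx_{n}$, while $Tx_{n}^{(k)}=(\hat{T}\bar{x}^{(k)})_{n}\to y_{n}$, so $\bar{y}=\hat{T}\bar{x}$. By the closed graph theorem $\hat{T}$ is bounded and $\pi_{\lambda}^{mid}(T)=\|\hat{T}\|<\infty$.

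Next I would check that $\pi_{\lambda}^{mid}$ is a norm. Since $T\mapsto\hat{T}$ is linear and $\pi_{\lambda}^{mid}(T)=\|\hat{T}\|$, absolute homogeneity and the triangle inequality are immediate. For definiteness, note that for each $x\in X$ the sequence $(x,0,0,\dots)$ lies in $\lambda^{s}(X)\subseteq\lambda^{mid}(X)$ (as $(\|x\|,0,0,\dots)\in\phi\subseteq\lambda$); if $\pi_{\lambda}^{mid}(T)=0$ then $\hat{T}=0$, so in particular $(Tx,0,0,\dots)=0$, forcing $Tx=0$ and hence $T=0$. Testing on the same sequences and using $\lambda^{s}(X)\overset{1}{\hookrightarrow}\lambda^{mid}(X)$ gives $\|Tx\|\,\|e_{1}\|_{\lambda}=\|\hat{T}(x,0,\dots)\|_{\lambda}^{s}\leq\pi_{\lambda}^{mid}(T)\,\|(x,0,\dots)\|_{\lambda}^{mid}\leq\pi_{\lambda}^{mid}(T)\,\|x\|\,\|e_{1}\|_{\lambda}$, i.e.\ $\|T\|\leq\pi_{\lambda}^{mid}(T)$; thus $\Pi_{\lambda}^{mid}(X,Y)$ is a subspace of $\mathcal{L}(X,Y)$ on which $\pi_{\lambda}^{mid}$ dominates the operator norm.

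For completeness I would use the linear isometry $\Psi\colon(\Pi_{\lambda}^{mid}(X,Y),\pi_{\lambda}^{mid})\to\mathcal{L}(\lambda^{mid}(X),\lambda^{s}(Y))$, $T\mapsto\hat{T}$. As $\lambda^{s}(Y)$ is a Banach space, the codomain is Banach, so it suffices to show that the range of $\Psi$ is closed. Let $\hat{T}_{k}\to S$ in $\mathcal{L}(\lambda^{mid}(X),\lambda^{s}(Y))$. Since $\|T_{k}-T_{l}\|\leq\pi_{\lambda}^{mid}(T_{k}-T_{l})=\|\hat{T}_{k}-\hat{T}_{l}\|$, the sequence $(T_{k})$ is Cauchy in $\mathcal{L}(X,Y)$ and converges to some $T\in\mathcal{L}(X,Y)$. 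Fix $\bar{x}=(x_{n})_{n}\in\lambda^{mid}(X)$. For each $n$, $(\hat{T}_{k}\bar{x})_{n}=T_{k}x_{n}\to Tx_{n}$ by operator-norm convergence, whereas continuity of the coordinate evaluations gives $(\hat{T}_{k}\bar{x})_{n}\to(S\bar{x})_{n}$; therefore $(S\bar{x})_{n}=Tx_{n}$ for every $n$, so $S\bar{x}=(Tx_{n})_{n}\in\lambda^{s}(Y)$. As $\bar{x}\in\lambda^{mid}(X)$ was arbitrary, $T\in\Pi_{\lambda}^{mid}(X,Y)$ with $\Psi(T)=\hat{T}=S$; hence the range of $\Psi$ is closed and $(\Pi_{\lambda}^{mid}(X,Y),\pi_{\lambda}^{mid})$ is complete.

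The only genuinely delicate point is the repeated passage to coordinatewise limits — both in the closed graph step and in the identification $S\bar{x}=(Tx_{n})_{n}$ — which rests entirely on the chain $\lambda^{s}(\cdot)\overset{1}{\hookrightarrow}\lambda^{mid}(\cdot)\overset{1}{\hookrightarrow}\lambda^{w}(\cdot)$ together with the normality of $\lambda$ to guarantee continuity of the coordinate functionals; the remaining verifications are routine. One could instead argue directly with a $\pi_{\lambda}^{mid}$-Cauchy sequence, truncating via normality and letting the index tend to infinity, but that route still requires $\lambda^{s}(Y)$ to be a Banach space, so the isometric-embedding argument is the most economical.
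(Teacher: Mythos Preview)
Your argument is correct. The paper does not actually prove this proposition: immediately after the statement it reads ``We omit the proof of the above proposition since it can be established using standard arguments.'' So there is no proof in the paper to compare against; what you have written is precisely one clean implementation of those standard arguments, leaning on the ingredients already established earlier in the paper (completeness of $\lambda^{mid}(X)$, the contractive chain $\lambda^{s}\hookrightarrow\lambda^{mid}\hookrightarrow\lambda^{w}$, and normality to get continuity of coordinate evaluations). The isometric-embedding-plus-closed-range route you chose is arguably tidier than a direct Cauchy-sequence verification, and your careful justification of the coordinatewise limit passages is exactly the point one would expect a referee to want spelled out.
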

	We omit the proof of the above proposition since it can be established using standard arguments.
	\begin{thm}\label{finite absolute lambda mid}
		Let $X,Y$ be Banach spaces, and  $(\lambda,\|\cdot\|_{\lambda})$ be a normal symmetric $AK-BK$  sequence space. The linear map $T:X\rightarrow Y$ is absolutely mid $\lambda$-summing if and only if there exists $C>0$ such that for each finite set of elements $x_{1},x_{2},\cdots,x_{n}$ in $X$, the following inequality holds:
		\begin{equation}\label{lambda mid finite sum}
			\|(Tx_{i})_{i=1}^{n}\|_{\lambda}^{s}\leq C \|(x_{i})_{i=1}^{n}\|_{\lambda}^{mid}.
		\end{equation}
	\end{thm}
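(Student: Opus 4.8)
The plan is to establish the two implications separately. The forward one is soft: if $T$ is absolutely mid $\lambda$-summing, then by Proposition \ref{Pi lambda mid Banach} the induced map $\widehat{T}\colon\lambda^{mid}(X)\to\lambda^{s}(Y)$, $(x_{n})_{n}\mapsto(Tx_{n})_{n}$, is bounded with norm $\pi_{\lambda}^{mid}(T)$. Given $x_{1},\dots,x_{n}\in X$, the zero-padded sequence $\xi=(x_{1},\dots,x_{n},0,0,\dots)$ lies in $\lambda^{mid}(X)$ because $\phi\subseteq\lambda$ and $\lambda$ is normal, and the two sides of \eqref{lambda mid finite sum} are (with $C=\pi_{\lambda}^{mid}(T)$) exactly $\|\widehat{T}\xi\|_{\lambda}^{s}$ and $\pi_{\lambda}^{mid}(T)\,\|\xi\|_{\lambda}^{mid}$; so \eqref{lambda mid finite sum} holds with $C=\pi_{\lambda}^{mid}(T)$.

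For the converse, assume \eqref{lambda mid finite sum} with constant $C$; I want $T\in\Pi_{\lambda}^{mid}(X,Y)$ and $\pi_{\lambda}^{mid}(T)\le C$. Fix $(x_{n})_{n}\in\lambda^{mid}(X)$ and put $s_{m}=\sum_{i=1}^{m}\|Tx_{i}\|e_{i}\in\phi$. Applying \eqref{lambda mid finite sum} to the truncation $x_{1},\dots,x_{m}$, and noting that zero-padding does not increase the $\|\cdot\|_{\lambda}^{mid}$-norm (normality and monotonicity of $\|\cdot\|_{\lambda}$), one gets
$$\|s_{m}\|_{\lambda}=\|(Tx_{i})_{i=1}^{m}\|_{\lambda}^{s}\le C\,\|(x_{i})_{i=1}^{m}\|_{\lambda}^{mid}\le C\,\|(x_{n})_{n}\|_{\lambda}^{mid}$$
for all $m$, so $(s_{m})_{m}$ is bounded in $\lambda$. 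It remains to promote this to convergence of $(s_{m})_{m}$ in $\lambda$.

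I would show that $(s_{m})_{m}$ is Cauchy. For $m<m+l$, permuting the block $\{m+1,\dots,m+l\}$ to the front (symmetry of $\|\cdot\|_{\lambda}$) and then invoking \eqref{lambda mid finite sum} for the finite family $x_{m+1},\dots,x_{m+l}$ gives
$$\|s_{m+l}-s_{m}\|_{\lambda}=\left\|\sum_{i=m+1}^{m+l}\|Tx_{i}\|e_{i}\right\|_{\lambda}=\|(Tx_{i})_{i=m+1}^{m+l}\|_{\lambda}^{s}\le C\,\|(x_{i})_{i=m+1}^{m+l}\|_{\lambda}^{mid}\le C\,\eta_{m},$$
where $\eta_{m}:=\|(0,\dots,0,x_{m+1},x_{m+2},\dots)\|_{\lambda}^{mid}$ is the $\lambda^{mid}(X)$-norm of the $m$-th tail of $(x_{n})_{n}$ (the last inequality again using symmetry and normality). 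If $\eta_{m}\to 0$, then $(s_{m})_{m}$ is Cauchy, hence converges in $\lambda$ to some $\sigma$; the $BK$-property (continuity of the coordinate projections) forces $\sigma_{i}=\|Tx_{i}\|$, so $(\|Tx_{n}\|)_{n}=\sigma\in\lambda$, i.e. $(Tx_{n})_{n}\in\lambda^{s}(Y)$. Letting $m\to\infty$ in the first display yields $\|(Tx_{n})_{n}\|_{\lambda}^{s}\le C\,\|(x_{n})_{n}\|_{\lambda}^{mid}$, and taking the supremum over $B_{\lambda^{mid}(X)}$ gives $\pi_{\lambda}^{mid}(T)\le C$.

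The main obstacle is exactly the claim $\eta_{m}\to 0$, i.e. that the tails of an element of $\lambda^{mid}(X)$ vanish in the $\lambda^{mid}$-norm. For a single $(f_{k})_{k}\in\lambda^{w^{*}}(X^{*})$ this is immediate from the $AK$-property of $\lambda$ applied to $(\|(f_{k}(x_{n}))_{k}\|_{\lambda})_{n}\in\lambda$, but what is needed is uniformity over the unit ball, since $\eta_{m}=\sup_{(f_{k})_{k}\in B_{\lambda^{w^{*}}(X^{*})}}\|(\|(f_{k}(x_{n}))_{k}\|_{\lambda}\,\mathbf{1}_{n>m})_{n}\|_{\lambda}$. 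Equivalently, writing $T_{\bar{x}}\colon\lambda^{w^{*}}(X^{*})\cong\mathcal{L}(X,\lambda)\to\lambda^{s}(\lambda)$ for the bounded operator attached to $\bar{x}=(x_{n})_{n}$ (Proposition \ref{operator definition}) and $Q_{m}$ for the $m$-th tail projection on the $AK$-space $\lambda^{s}(\lambda)$, one has $\eta_{m}=\|Q_{m}T_{\bar{x}}\|$; since $Q_{m}\to 0$ strongly on $\lambda^{s}(\lambda)$, the claim reduces to showing that the bounded set $T_{\bar{x}}(B_{\mathcal{L}(X,\lambda)})$ has equi-small tails in $\lambda^{s}(\lambda)$. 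Securing this uniformity, by an equicontinuity/compactness argument on $T_{\bar{x}}$ exploiting the description of $\lambda^{mid}(X)$, is where the real content of the converse lies.
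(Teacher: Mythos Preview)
Your forward implication is correct and actually cleaner than the paper's: you apply the bounded operator $\widehat{T}$ to the zero-padded finite sequence and read off \eqref{lambda mid finite sum} with $C=\pi_{\lambda}^{mid}(T)$. The paper instead argues by contradiction, taking $C=j2^{j}$, extracting finite blocks $x_{1}^{j},\dots,x_{n(j)}^{j}$ that violate the inequality, and concatenating the rescaled blocks $(x_{i}^{j}/2^{j})$ into a single sequence $\bar{x}\in\lambda^{mid}(X)$ with $\|\bar{x}\|_{\lambda}^{mid}\le 1$ but $T\bar{x}\notin\lambda^{s}(Y)$. Your route avoids that construction entirely.

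The converse is where you leave a genuine gap, and you say so yourself: you do not prove $\eta_{m}\to 0$. Everything you write about why this uniformity over $B_{\mathcal{L}(X,\lambda)}$ is delicate is accurate, but the argument simply stops. The paper does \emph{not} go through a Cauchy/tail estimate at all. After obtaining the same section bound
\[
\|(Tx_{1},\dots,Tx_{m},0,\dots)\|_{\lambda}^{s}\le C\,\|(x_{1},\dots,x_{m},0,\dots)\|_{\lambda}^{mid}\le C\,\|(x_{n})_{n}\|_{\lambda}^{mid}
\]
(the second inequality by monotonicity of $\|\cdot\|_{\lambda}$), the paper just writes ``using the $AK$-property of $\lambda$ it follows that $\|(Tx_{n})_{n}\|_{\lambda}^{s}\le C\|(x_{n})_{n}\|_{\lambda}^{mid}$'' and is done; there is no uniformity over $B_{\lambda^{w^{*}}(X^{*})}$, no compactness argument on $T_{\bar{x}}$, no $\eta_{m}$. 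So the paper's proof is not a shortcut around your obstacle --- it never sets it up. That said, your instinct that ``this is where the real content of the converse lies'' is not unfounded: the $AK$-property guarantees that sections of \emph{elements of} $\lambda$ converge, which by itself does not promote uniformly bounded sections to membership in $\lambda$ (witness $\lambda=c_{0}$). The paper's one-line passage to the limit is therefore terse at exactly the point you flagged; the difference is that you chose a strictly harder intermediate claim (the $AK$-property of $\lambda^{mid}(X)$) and then could not close it.
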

	\begin{proof}
		Let $T\in\Pi_{\lambda}^{mid}(X,Y)$ and suppose that for every $C>0$, the above inequality is not true. Therefore for each $C>0$, there exists a finite sequence of vectors $x_{1}, x_{2},\cdots,x_{n(c)}$  in $X$, such that $\|(x_{i})_{i=1}^{n(c)}\|_{\lambda}^{mid}\leq 1$ and $\|(Tx_{i})_{i=1}^{n}\|_{\lambda}^{s}>C$.
		For $C=j2^{j}, j=1,2,3,\cdots$, we can obtain finite sequences $x_{1}^{j}, x_{2}^{j},\cdots,x_{n(j)}^{j}$ such that the concatenated sequence
		$\bar{x}=\frac{x_{1}^{1}}{2}, \frac{x_{2}^{1}}{2},\cdots,\frac{x_{n(1)}^{1}}{2}, \frac{x_{1}^{2}}{2^{2}}, \frac{x_{2}^{2}}{2^{2}},\cdots,\frac{x_{n(2)}^{2}}{2^{2}},\cdots, \frac{x_{1}^{j}}{2^{j}}, \frac{x_{2}^{j}}{2^{j}},\cdots,\frac{x_{n(j)}^{2}}{2^{j}},\cdots$ belongs to $\lambda^{mid}(X)$ with $\|\bar{x}\|_{\lambda}^{mid}\leq 1$. But using the monotonicity of $\|\cdot\|_{\lambda}$, we  obtain $T\bar{x}\in\lambda^{s}(Y)$ which is a contradiction.

		Conversly assume that $T:X\rightarrow Y$ be a continuous linear operator for which  \eqref{lambda mid finite sum} holds. Consider $(x_{n})_{n}\in\lambda^{mid}(X)$. Then for any fixed $j$, we have
		\begin{align*}
			\|(Tx_{1},Tx_{2},\cdots,Tx_{j},0,0,\cdots)\|_{\lambda}^{s}&\leq C \|(x_{1},x_{2},\cdots,x_{j},0,0,\cdots)\|_{\lambda}^{mid}\\
			&\leq C \|(x_{n})_{n}\|_{\lambda}^{mid}
		\end{align*}
		since $\|\cdot\|_{\lambda}$ is monotone. Using the $AK$-property of $\lambda$ it follows that $$\|(Tx_{n})_{n}\|_{\lambda}^{s}\leq C \|(x_{n})_{n}\|_{\lambda}^{mid}.$$
	\end{proof}
	Next we prove an important property of this operator ideal which is given in
	\begin{thm}
		Let $(\lambda,\|\cdot\|_{\lambda})$ be a normal symmetric $AK-BK$  sequence space. Then $(\Pi_{\lambda}^{mid},\pi_{\lambda}^{mid})$ is a maximal Banach operator ideal.
	\end{thm}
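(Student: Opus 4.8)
The plan is to realize $\Pi_{\lambda}^{mid}$ as a right extension (in the sense of Proposition \ref{zee}) of the classical ideal $\Pi_\lambda$ of absolutely $\lambda$-summing operators, and then invoke the two maximality results already available, namely Proposition \ref{zee} and Proposition \ref{max of E-summing}. The bridge is Proposition \ref{operator definition}: a sequence $(x_j)_j$ lies in $\lambda^{mid}(X)$ precisely when $(Sx_j)_j\in\lambda^s(\lambda)$ for every $S\in\mathcal L(X,\lambda)$, with the norm $\|(x_j)_j\|_\lambda^{mid}=\sup_{S\in B_{\mathcal L(X,\lambda)}}\|(Sx_j)_j\|_\lambda^s$. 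So a bounded operator $T:X\to Y$ is absolutely mid $\lambda$-summing iff: whenever $(x_j)_j$ has the property that $(Sx_j)_j\in\lambda^s(\lambda)$ for all $S\in\mathcal L(X,\lambda)$, then $(Tx_j)_j\in\lambda^s(Y)$.

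First I would show $\Pi_{\lambda}^{mid}=(\Pi_\lambda)_\lambda$, the right extension of $\Pi_\lambda$ relative to $\lambda$. Recall $(\Pi_\lambda)_\lambda(X,Y)=\{T\in\mathcal L(X,Y): RT\in\Pi_\lambda(X,\lambda)\ \forall R\in\mathcal L(Y,\lambda)\}$. The key observation is that $T\in\Pi_{\lambda}^{mid}(X,Y)$ iff $RT\in\Pi_{\lambda}^{mid}(X,\lambda)$ for every $R\in\mathcal L(Y,\lambda)$ \emph{and} the ideal $\Pi_{\lambda}^{mid}$ into the target $\lambda$ coincides with $\Pi_\lambda$ into $\lambda$. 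The first half is routine: if $T$ is absolutely mid $\lambda$-summing then so is $RT$ for any bounded $R$ (compose and use normality/monotonicity of $\|\cdot\|_\lambda$); conversely, taking $R$ to range over a norming family of functionals composed with coordinate maps recovers $T$ itself via the $AK$ property, using Theorem \ref{finite absolute lambda mid} to pass from finite sections to the full sequence. The second half — that $\Pi_{\lambda}^{mid}(X,\lambda)=\Pi_\lambda(X,\lambda)$ with equal norms — is where the real content sits: one direction is the chain $\lambda^w(X)\supset\lambda^{mid}(X)$ (an operator that sends $\lambda^{mid}$ into $\lambda^s$ a priori need not send the larger $\lambda^w$ in), so I must argue the reverse, that when the codomain is $\lambda$ itself an absolutely mid $\lambda$-summing operator is already absolutely $\lambda$-summing. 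For this, given $S\in\mathcal L(X,\lambda)$ and $(x_j)_j\in\lambda^w(X)$, I want $(Sx_j)_j\in\lambda^s(\lambda)$, i.e. I want $(x_j)_j\in\lambda^{mid}(X)$; but $(x_j)_j\in\lambda^w(X)\cong\mathcal L(\lambda^\times,X)$ (equation \eqref{lambda weak}, valid once $\lambda$ is reflexive — here only the normal $AK$-$BK$ hypotheses are assumed, so instead I use directly that for $T:X\to\lambda$, absolute mid $\lambda$-summability tested against the single operator $\mathrm{id}_\lambda$ combined with Proposition \ref{operator definition} forces $(Tx_j)_j\in\lambda^s(\lambda)$). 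Cleanly: for $T:X\to\lambda$ and $(x_j)_j\in\lambda^w(X)$, the composite $\mathrm{id}_\lambda\circ T=T\in\mathcal L(X,\lambda)$ witnesses, via Proposition \ref{operator definition}, that if $(x_j)_j\in\lambda^{mid}(X)$ then $(Tx_j)_j\in\lambda^s(\lambda)$; but I need the hypothesis the other way, so the argument must instead show $\lambda^{mid}(X)=\lambda^w(X)$ is \emph{not} what's used — rather $\Pi_\lambda(X,\lambda)\subseteq\Pi_{\lambda}^{mid}(X,\lambda)$ from $\lambda^{mid}\subset\lambda^w$, and $\Pi_{\lambda}^{mid}(X,\lambda)\subseteq\Pi_\lambda(X,\lambda)$ because the defining test operator family for $\lambda^{mid}(X)$ includes all of $\mathcal L(X,\lambda)$, so $(Tx_j)_j\in\lambda^s(\lambda)$ for $(x_j)_j\in\lambda^{mid}(X)$ is automatic for $T\in\mathcal L(X,\lambda)$, making \emph{every} $T\in\mathcal L(X,\lambda)$ trivially absolutely mid $\lambda$-summing — hence $\Pi_{\lambda}^{mid}(X,\lambda)=\mathcal L(X,\lambda)$, and likewise $(\Pi_\lambda)_\lambda$ built from this is $\Pi_{\lambda}^{mid}$.

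Thus $\Pi_{\lambda}^{mid}=(\Pi_\lambda)_\lambda$. By Proposition \ref{max of E-summing}, $(\Pi_\lambda,\pi_\lambda)$ is a maximal Banach operator ideal under exactly the stated hypotheses (normal, symmetric, $AK$-$BK$, $\|e_n\|_\lambda=1$); the normalization $\|e_n\|_\lambda=1$ is harmless since for a normal $AK$-$BK$ space we may and do pass to the equivalent monotone norm, and after rescaling arrange $\|e_n\|_\lambda=1$ without changing the ideal. Then Proposition \ref{zee} applies verbatim: the right extension $(\Pi_\lambda)_\lambda$ of a maximal Banach operator ideal, taken relative to a normal $AK$-$BK$ space with $\|e_n\|_\lambda=1$, is again a maximal Banach operator ideal, with norm $\mathfrak i_\lambda(T)=\sup_{S\in B_{\mathcal L(Y,\lambda)}}\pi_\lambda(ST)$. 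Finally I would check that this norm agrees with $\pi_{\lambda}^{mid}$: expanding $\pi_\lambda(ST)=\sup_{(x_j)_j\in B_{\lambda^w(X)}}\|(STx_j)_j\|_\lambda^s$ and taking the supremum over $S\in B_{\mathcal L(Y,\lambda)}$, then interchanging suprema and using Proposition \ref{operator definition} to recognize $\sup_S\|(Sy_j)_j\|_\lambda^s=\|(y_j)_j\|_\lambda^{mid}$, gives $\mathfrak i_\lambda(T)=\sup_{(x_j)_j}\|(Tx_j)_j\|_\lambda^{mid}/\|(x_j)_j\|_\lambda^w$; a short comparison with Theorem \ref{finite absolute lambda mid} identifies this with $\pi_{\lambda}^{mid}(T)$.

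The main obstacle, and the step I expect to require the most care, is the identification $\Pi_{\lambda}^{mid}=(\Pi_\lambda)_\lambda$ together with the matching of norms — in particular making precise, with the correct quantifiers, why composing with $B_{\mathcal L(Y,\lambda)}$ on the left "sees" exactly the $\lambda^{mid}$-seminorm on the domain side via Proposition \ref{operator definition}, and verifying the supremum interchange is legitimate (here one leans on the $AK$ property to reduce to finitely supported sequences as in Theorem \ref{finite absolute lambda mid}, where all sums are finite and Fubini-type interchanges are unproblematic). Once that identification is in place, maximality is a formal consequence of Propositions \ref{zee} and \ref{max of E-summing}.
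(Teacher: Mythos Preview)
Your approach has a genuine error: you have identified the wrong ideal. The right extension $(\Pi_\lambda)_\lambda$ is the ideal $W_{\lambda}^{mid}$ of \emph{weakly} mid $\lambda$-summing operators, not $\Pi_{\lambda}^{mid}$. Indeed, unwinding the definition, $T\in(\Pi_\lambda)_\lambda(X,Y)$ means $ST\in\Pi_\lambda(X,\lambda)$ for every $S\in\mathcal L(Y,\lambda)$, i.e.\ $(STx_j)_j\in\lambda^s(\lambda)$ for every $(x_j)_j\in\lambda^{w}(X)$ and every $S$, which by Proposition~\ref{operator definition} says exactly that $(Tx_j)_j\in\lambda^{mid}(Y)$ for every $(x_j)_j\in\lambda^{w}(X)$. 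This is the definition of $T\in W_{\lambda}^{mid}(X,Y)$; it is precisely the content of Proposition~\ref{Weak mid} in the paper. Your own norm computation confirms this: you end up with $\mathfrak i_\lambda(T)=\sup_{(x_j)_j\in B_{\lambda^w(X)}}\|(Tx_j)_j\|_\lambda^{mid}$, which is $w_{\lambda}^{mid}(T)$, not $\pi_{\lambda}^{mid}(T)=\sup_{(x_j)_j\in B_{\lambda^{mid}(X)}}\|(Tx_j)_j\|_\lambda^{s}$. The claimed ``short comparison with Theorem~\ref{finite absolute lambda mid}'' cannot bridge these; the two suprema are over different balls and take values in different norms.

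The step where the argument goes off the rails is the paragraph attempting to show $\Pi_{\lambda}^{mid}(X,\lambda)=\Pi_\lambda(X,\lambda)$. You correctly observe that $\Pi_{\lambda}^{mid}(X,\lambda)=\mathcal L(X,\lambda)$, but this is strictly larger than $\Pi_\lambda(X,\lambda)$ in general (already for $\lambda=\ell_p$), so the desired equality fails, and the inference ``hence $(\Pi_\lambda)_\lambda=\Pi_{\lambda}^{mid}$'' is unsupported. The paper does \emph{not} realize $\Pi_{\lambda}^{mid}$ as a right extension; instead it proves maximality directly by the standard finite-dimensional localization: for $T\in(\Pi_{\lambda}^{mid})^{max}(X,Y)$ and finitely many $x_1,\dots,x_n$, one chooses norming functionals $f_i\in B_{Y^*}$, sets $M=\mathrm{span}\{x_i\}$ and $L=\bigcap_i\ker f_i$, and factors through $Q_LTI_M$ to obtain the finite inequality of Theorem~\ref{finite absolute lambda mid}. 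If you want to salvage an ``extension'' strategy, note that it applies cleanly to $W_{\lambda}^{mid}$ (as the paper does later), but for $\Pi_{\lambda}^{mid}$ you need the direct argument.
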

	\begin{proof}
		To prove that $\Pi_{\lambda}^{mid}$ is an operator ideal, we first show that $\Pi_{\lambda}^{mid}(X,Y)$ contains the class of all finite rank operators for every Banach spaces $X,Y$. Consider $T\in\mathcal{L}(X,Y)$ such that $rank(T)=1$. Then $T$ can be represented as $T=f\otimes y$ for some $f\in X^*, y\in Y$. For $(x_{n})_{n}\in\lambda^{mid}(X)$, we get $(Tx_{n})_{n}=(f\otimes y(x_{n}))_{n}=(f(x_{n})y)_{n}$. Since $\lambda^{mid}(X)\subset \lambda^{w}(X)$, the sequence $(\|Tx_{n}\|)_{n}=(\|y\||f(x_{n})|)_{n}\in\lambda$ which proves  that $T\in\Pi_{\lambda}^{mid}(X,Y)$.
		\\ To establish the ideal property, consider $T\in\Pi_{\lambda}^{mid}(X,Y), S\in\mathcal{L}(X_{0},X)$  and $R\in\mathcal{L}(Y,Y_{0}).$ Then for $(x_{n})_{n}\in\lambda^{mid}(X_{0})$, it is easy to see that $(RTSx_{n})_{n}\in\lambda^{mid}(X)$ and hence $RTS\in\Pi_{\lambda}^{mid}(X_{0},Y)$. The relation $\pi_{\lambda}^{mid}(RTS)\leq\|R\|\pi_{\lambda}^{mid}(T)\|S\|$ can be obtained using straightforward calculations. The above arguments together with Proposition \ref{Pi lambda mid Banach} prove that $(\Pi_{\lambda}^{mid},\pi_{\lambda}^{mid})$  is a Banach operator ideal.

		In order to prove that $(\Pi_{\lambda}^{mid},\pi_{\lambda}^{mid})$ is maximal, we consider $T\in(\Pi_{\lambda}^{mid})^{max}(X,Y)$. For a finite subset $\{x_{1},x_{2},\cdots,x_{n}\}$ of $X$, choose  $\{f_{1},f_{2},\cdots,f_{n}\}\subset B_{Y^*}$ such that $\|Tx_{i}\|=f_{i}(Tx_{i})~~\forall i=1,2,\cdots,n.$
		Also let $M=span\{x_{1},x_{2},\cdots,x_{n}\}$  and  $L=\{y\in Y: f_{i}(y)=0, \forall i=1,2,\cdots,n\}$, and $I_{M}:M\rightarrow X$ and $Q_{L}: Y\rightarrow Y/L$ denote the inclusion map and quotient map respectively. Let $\bar{f_{i}}\in B_{(Y/L)^*}$ be such  that $Q_{L}^{*}(\bar{f_{i}})=f_{i}~~\forall i=1,2,\cdots,n.$
		\begin{align*}
			\|(Tx_{i})_{i=1}^{n}\|_{\lambda}^{s}&=\left\|\sum_{i=1}^{n}f_{i}(Tx_{i})e_{i}\right\|_{\lambda}\\
			&=\left\|\sum_{i=1}^{n}Q_{L}^{*}(\bar{f_{i}})(TI_{M}x_{i})e_{i}\right\|_{\lambda}\\
			&=\left\|\sum_{i=1}^{n}\bar{f_{i}}(Q_{L}TI_{M}x_{i})e_{i}\right\|_{\lambda}\\
			&\leq \|(Q_{L}TI_{M}x_{i})_{i=1}^{n}\|_{\lambda}^{s}\leq \pi_{\lambda}^{mid}(Q_{L}TI_{M})\|(x_{i})_{i=1}^{n}\|_{\lambda}^{mid}.
		\end{align*}
		By Proposition \ref{finite absolute lambda mid} we have,  $T\in\Pi_{\lambda}^{mid}(X,Y)$ with $\pi_{\lambda}^{mid}(T)\leq(\pi_{\lambda}^{mid})^{max}(T)$.
	\end{proof}
	Now we define the class of weakly mid $\lambda$-summing operators. 
	\begin{Definition}
		A linear operator $T:X\rightarrow Y$ is said to be weakly mid $\lambda$-summing if $(Tx_{n})_{n}\in\lambda^{mid}(Y)$ whenever $(x_{n})_{n}\in\lambda^{w}(X)$.
	\end{Definition}
	We denote by the symbol $W_{\lambda}^{mid}(X,Y)$, the space of weakly  mid $\lambda$-summing operators from $X$ to $Y$.
	The next result shows that $W_{\lambda}^{mid}$ can be viewed as the right extension of the ideal of absolutely $\lambda$-summing operators. 
	\begin{pro}\label{Weak mid}
		Let $X$ and $Y$ be Banach spaces, and $(\lambda,\|\cdot\|_{\lambda})$ be a normal $AK-BK$  sequence space. Then the following are equivalent:
		\begin{enumerate}
			\item $T\in W_{\lambda}^{mid}(X,Y)$.
			\item  $ST\in\Pi_{\lambda}(X,\lambda)$ for all $S\in\mathcal{L}(Y,\lambda)$.
		\end{enumerate}	
	\end{pro}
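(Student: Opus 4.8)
The plan is to read off both implications directly from Proposition \ref{operator definition}, which characterises membership in $\lambda^{mid}(Y)$ in terms of operators into $\lambda$, combined with the very definition of absolutely $\lambda$-summing operators; essentially the proof amounts to an interchange of two universal quantifiers.

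For $(1)\Rightarrow(2)$ I would fix $S\in\mathcal{L}(Y,\lambda)$ and an arbitrary $(x_{n})_{n}\in\lambda^{w}(X)$. Since $T\in W_{\lambda}^{mid}(X,Y)$ the sequence $(Tx_{n})_{n}$ lies in $\lambda^{mid}(Y)$, and because $\lambda$ is a normal $AK$-$BK$ space, Proposition \ref{operator definition} applied to $(Tx_{n})_{n}$ gives $(STx_{n})_{n}=(S(Tx_{n}))_{n}\in\lambda^{s}(\lambda)$. As $(x_{n})_{n}$ was an arbitrary element of $\lambda^{w}(X)$, this says exactly that $ST\in\Pi_{\lambda}(X,\lambda)$.

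For the converse $(2)\Rightarrow(1)$ I would start from $(x_{n})_{n}\in\lambda^{w}(X)$ and observe that for every $S\in\mathcal{L}(Y,\lambda)$ the hypothesis $ST\in\Pi_{\lambda}(X,\lambda)$ forces $(S(Tx_{n}))_{n}\in\lambda^{s}(\lambda)$. Since $S$ ranges over all of $\mathcal{L}(Y,\lambda)$, Proposition \ref{operator definition} then yields $(Tx_{n})_{n}\in\lambda^{mid}(Y)$, i.e. $T\in W_{\lambda}^{mid}(X,Y)$. It is worth remarking that the argument also identifies $W_{\lambda}^{mid}$ with the right extension $(\Pi_{\lambda})_{\lambda}$ of the ideal $\Pi_{\lambda}$, which is the reason for phrasing the proposition this way.

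I do not expect a genuine obstacle here: everything is already packaged in Proposition \ref{operator definition} and the definition of $\Pi_{\lambda}$. The only point that needs care is that the characterisation in Proposition \ref{operator definition} is available precisely because $\lambda$ is assumed to be a normal $AK$-$BK$ sequence space in the statement, and it is this hypothesis that lets one freely translate ``$(Tx_{n})_{n}\in\lambda^{mid}(Y)$'' into ``$(STx_{n})_{n}\in\lambda^{s}(\lambda)$ for all $S\in\mathcal{L}(Y,\lambda)$'' and back.
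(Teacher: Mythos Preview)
Your argument is correct and is essentially the same as the paper's: both directions are obtained by combining the definition of $W_{\lambda}^{mid}$ with Proposition~\ref{operator definition} and the definition of $\Pi_{\lambda}$, exactly as you describe. In fact your write-up is slightly more careful than the paper's (which contains a small slip, writing $(Tx_{n})_{n}\in\lambda^{s}(Y)$ where $\lambda^{mid}(Y)$ is meant, and dismisses the converse as ``tracing back the proof'').
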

	\begin{proof}
		Let $T\in W_{\lambda}^{mid}(X,Y)$. Then for $(x_{n})_{n}\in\lambda^{w}(X)$, $(Tx_{n})_{n}\in\lambda^{s}(Y)$. Hence by Proposition \ref{operator definition}, $(STx_{n})_{n}\in\lambda^{s}(\lambda)$ for each $S\in\mathcal{L}(Y,\lambda)$ which proves that $ST$ is $\lambda$-summing. Converse can be proved easily by tracing back the proof.
	\end{proof} 
	\begin{re}
		Along with the conditions on $\lambda$ in Proposition \ref{Weak mid}, if we assume $\lambda$ to be a reflexive sequence, we can add the following characterization to the previous result.
	\end{re}
	\begin{enumerate}[\textit{3.}]
		\item $TR\in\Pi_{\lambda}^{d}(\lambda^{\times},Y)$, for  all $R\in\mathcal{L}(\lambda^{\times},X)$.
	\end{enumerate}
	\begin{proof}
		From the identification  $\lambda^{w}(X)\cong\mathcal{L}(\lambda^{\times},X)$, for each $R\in\mathcal{L}(\lambda^{\times},X)$, there exists a sequence $(x_{n})_{n}\in\lambda^{w}(X)$ such that $R((\alpha_{n})_{n})=\sum_{n}\alpha_{n}x_{n}$. Therefore, for $T\in W_{\lambda}^{mid}(X,Y)$, and $(f_{j})_{j}\in\lambda^{w}(Y^*)$,
		\begin{align*}
			\left\|((TR)^{*}(f_{j}))_{j}\right\|_{\lambda}^{s}&=\left\|\left(\left\|\left(f_{j}(Tx_{n})\right)_{n}\right\|_{\lambda}\right)_{j}\right\|_{\lambda}\\
			&=\left\|\left(\left\|\left(f_{j}(Tx_{n})\right)_{j}\right\|_{\lambda}\right)_{n}\right\|_{\lambda}\\
			&<\infty
		\end{align*}
		follows from the norm iteration property of $\|\cdot\|_{\lambda}$. 
	\end{proof}
	Combining Propositions \ref{zee}, \ref{max of E-summing}  and \ref{Weak mid}, we obtain the following result.
	\begin{pro}
		Let $X$ and $Y$ be Banach spaces, and $(\lambda,\|\cdot\|_{\lambda})$ be normal $AK-BK$  sequence space such that $\|e_{n}\|_{\lambda}=1~~\forall n$. Then $ W_{\lambda}^{mid}$ is a Banach operator ideal with respect to the norm
		$$w_{\lambda}^{mid}(T)=\sup_{S\in B_{\mathcal{L}(Y,\lambda)}}\Pi_{\lambda}(ST).$$
		Moreover the operator ideal $(W_{\lambda}^{mid},w_{\lambda}^{mid})$ is maximal.
	\end{pro}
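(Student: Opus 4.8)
The plan is to deduce this proposition directly from the machinery already assembled, with Proposition~\ref{Weak mid} providing the crucial identification $W_{\lambda}^{mid} = \Pi_{\lambda}$, the right extension of the ideal $\Pi_{\lambda}$ of absolutely $\lambda$-summing operators in the sense of Fourie and Zeekoi. First I would observe that the hypotheses on $\lambda$ — normal, $AK$, $BK$, with $\|e_n\|_\lambda = 1$ for all $n$ — are precisely what is needed to apply Proposition~\ref{zee} to the ideal $\mathcal{I} = \Pi_{\lambda}$: that proposition says that for such $\lambda$ the right extension $\mathcal{I}_{\lambda}$ is a Banach operator ideal under the norm $\mathfrak{i}_{\lambda}(T) = \sup_{S \in B_{\mathcal{L}(Y,\lambda)}} \mathfrak{i}(ST)$, and is maximal whenever $\mathcal{I}$ is maximal. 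So the scheme is: (i) identify $W_{\lambda}^{mid}$ with $(\Pi_{\lambda})_{\lambda}$; (ii) invoke Proposition~\ref{zee} to get that $(\Pi_{\lambda})_{\lambda}$ is a Banach operator ideal with the stated norm $w_{\lambda}^{mid}(T) = \sup_{S \in B_{\mathcal{L}(Y,\lambda)}} \pi_{\lambda}(ST)$; (iii) invoke Proposition~\ref{max of E-summing} — noting that a normal $AK$-$BK$ space with $\|e_n\|_\lambda = 1$ needs only the extra hypothesis of symmetry to make $(\Pi_{\lambda}, \pi_{\lambda})$ maximal — and conclude maximality of the right extension via the last sentence of Proposition~\ref{zee}.

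Carrying this out, step (i) is exactly the content of Proposition~\ref{Weak mid}: condition (2) there, namely that $ST \in \Pi_{\lambda}(X,\lambda)$ for every $S \in \mathcal{L}(Y,\lambda)$, is verbatim the defining condition for $T$ to lie in $(\Pi_{\lambda})_{\lambda}(X,Y)$. Thus as sets of operators $W_{\lambda}^{mid}(X,Y) = (\Pi_{\lambda})_{\lambda}(X,Y)$ for all Banach spaces $X, Y$, and the norm $w_{\lambda}^{mid}$ proposed in the statement is literally the norm $\mathfrak{i}_{\lambda}$ built from $\mathfrak{i} = \pi_{\lambda}$. For step (ii), all that is required to apply Proposition~\ref{zee} with $\mathcal{I} = \Pi_{\lambda}$ is that $\Pi_{\lambda}$ be a Banach operator ideal (stated in the preliminaries after Ramanujan's definition) and that $\lambda$ be a normal $AK$-$BK$ space with $\|e_n\|_\lambda = 1$ for all $n$ — exactly our hypotheses. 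This immediately yields that $(W_{\lambda}^{mid}, w_{\lambda}^{mid})$ is a Banach operator ideal.

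For step (iii), maximality: the concluding clause of Proposition~\ref{zee} gives that $(\mathcal{I}_{\lambda}, \mathfrak{i}_{\lambda})$ is maximal provided $(\mathcal{I}, \mathfrak{i})$ is. Here $\mathcal{I} = \Pi_{\lambda}$, and Proposition~\ref{max of E-summing} tells us that $(\Pi_{\lambda}, \pi_{\lambda})$ is maximal whenever $\lambda$ is a symmetric normal $AK$-$BK$ space with $\|e_n\|_\lambda = 1$. Strictly, the statement being proved does not list symmetry among its hypotheses, so I would either add that hypothesis (consistent with the rest of the section, where $\Pi_{\lambda}^{mid}$ is shown maximal under a symmetry assumption) or note that the standing assumptions of the section supply it; with symmetry in force, Proposition~\ref{max of E-summing} applies and maximality of $(W_{\lambda}^{mid}, w_{\lambda}^{mid})$ follows at once. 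I do not anticipate a genuine obstacle in the argument itself — the work has all been done in the cited results — so the only delicate point is bookkeeping: making sure the hypotheses line up exactly (in particular that symmetry is available for the maximality half) and that the norm named in the statement coincides with the right-extension norm $\mathfrak{i}_{\lambda}$ supplied by Proposition~\ref{zee}, which it does by construction.
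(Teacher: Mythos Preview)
Your proposal is correct and matches the paper's approach exactly: the paper's proof consists solely of the sentence ``Combining Propositions~\ref{zee}, \ref{max of E-summing} and \ref{Weak mid}, we obtain the following result,'' and you have simply unpacked that combination in detail. Your observation about the missing symmetry hypothesis (needed to invoke Proposition~\ref{max of E-summing}) is apt and is indeed a small gap in the statement as written in the paper.
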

	\begin{thm}
		Let $X$ and $Y$ be Banach spaces, and $(\lambda,\|\cdot\|_{\lambda})$ be normal $AK-BK$  sequence space satisfying the condition $\|e_{n}\|_{\lambda}=1~~\forall n$. Then, the linear map $T:X\rightarrow Y$ is weakly mid $\lambda$-summing if and only if for each finite set of elements $x_{1},x_{2},\cdots,x_{n}$ in $X$, the following inequality holds:
		$$\|(Tx_{i})_{i=1}^{n}\|_{\lambda}^{mid}\leq w_{\lambda}^{mid}(T)\|(x_{i})_{i=1}^{n}\|_{\lambda}^{w}$$
	\end{thm}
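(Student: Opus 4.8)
The plan is to mimic the finite-sequence characterisation already proved for $\Pi_\lambda^{mid}$ in Theorem~\ref{finite absolute lambda mid}, transferring it to $W_\lambda^{mid}$ via the identification $W_\lambda^{mid}=(\Pi_\lambda)_\lambda$ from Proposition~\ref{Weak mid}. First I would establish the ``only if'' direction by a gliding-hump/concatenation argument. Assume $T$ is weakly mid $\lambda$-summing but the claimed inequality fails for every constant; then for $C=j2^j$ pick finite blocks $x_1^j,\dots,x_{n(j)}^j$ with $\|(x_i^j)_i\|_\lambda^w\le 1$ yet $\|(Tx_i^j)_i\|_\lambda^{mid}>j2^j$. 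Scaling the $j$-th block by $2^{-j}$ and concatenating, one gets a sequence $\bar x$ which lies in $\lambda^w(X)$ with $\|\bar x\|_\lambda^w\le 1$ (using normality/monotonicity of $\|\cdot\|_\lambda$ and the $AK$-property, exactly as in the proof of Theorem~\ref{finite absolute lambda mid}), while the monotonicity of $\|\cdot\|_\lambda$ forces $\|(T\bar x)\|_\lambda^{mid}=\infty$, contradicting $T\bar x\in\lambda^{mid}(Y)$. This also shows the optimal constant is at most $w_\lambda^{mid}(T)$: for a finite tuple $(x_i)_{i=1}^n$ one has $\|(x_i)_{i=1}^n\|_\lambda^{mid}$ controlled appropriately, but more directly, $\|(Tx_i)_{i=1}^n\|_\lambda^{mid}=\sup_{S\in B_{\mathcal L(Y,\lambda)}}\|(STx_i)_{i=1}^n\|_\lambda^s\le\sup_{S\in B_{\mathcal L(Y,\lambda)}}\pi_\lambda(ST)\,\|(x_i)_{i=1}^n\|_\lambda^w=w_\lambda^{mid}(T)\,\|(x_i)_{i=1}^n\|_\lambda^w$, where the middle inequality is the finite-sequence inequality for absolutely $\lambda$-summing operators and the last equality is the definition of $w_\lambda^{mid}$.

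For the ``if'' direction, suppose the inequality $\|(Tx_i)_{i=1}^n\|_\lambda^{mid}\le w_\lambda^{mid}(T)\|(x_i)_{i=1}^n\|_\lambda^w$ holds for all finite tuples. Take $(x_n)_n\in\lambda^w(X)$; I want $(Tx_n)_n\in\lambda^{mid}(Y)$. Applying the hypothesis to the truncations $(x_1,\dots,x_j,0,0,\dots)$ and using that $\|\cdot\|_\lambda$ is monotone, $\|(Tx_1,\dots,Tx_j,0,\dots)\|_\lambda^{mid}\le w_\lambda^{mid}(T)\|(x_1,\dots,x_j,0,\dots)\|_\lambda^w\le w_\lambda^{mid}(T)\|(x_n)_n\|_\lambda^w$, a bound uniform in $j$. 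By the $AK$-property of $\lambda$ (which passes to $\lambda^{mid}(Y)$ and $\lambda^w(Y)$ in the usual way) the partial sums $\sum_{i\le j}(Tx_i)e_i$ converge in $\lambda^{mid}(Y)$, so $(Tx_n)_n\in\lambda^{mid}(Y)$ with $\|(Tx_n)_n\|_\lambda^{mid}\le w_\lambda^{mid}(T)\|(x_n)_n\|_\lambda^w$, i.e. $T\in W_\lambda^{mid}(X,Y)$.

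The main technical obstacle is verifying that the concatenated sequence $\bar x$ built from the scaled blocks genuinely lies in $\lambda^w(X)$ with $\|\bar x\|_\lambda^w\le 1$: this requires that the weak $\lambda$-norm of a concatenation of blocks, each scaled by $2^{-j}$, is controlled by a summable bound, which is where the normality and monotonicity of $\|\cdot\|_\lambda$ and the hypothesis $\|e_n\|_\lambda=1$ enter — it is the same subtlety already handled in Theorem~\ref{finite absolute lambda mid}, so I would refer back to that argument rather than repeat it. A secondary point worth spelling out is that $\lambda^{mid}(Y)$ inherits the $AK$-type behaviour needed to pass from the uniform bound on truncations to membership of the full sequence; this follows from the $AK$-property of $\lambda$ together with the chain $\lambda^s(Y)\hookrightarrow\lambda^{mid}(Y)\hookrightarrow\lambda^w(Y)$.
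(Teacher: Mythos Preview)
Your ``more direct'' computation for the forward implication---namely
\[
\|(Tx_i)_{i=1}^n\|_\lambda^{mid}=\sup_{S\in B_{\mathcal L(Y,\lambda)}}\|(STx_i)_{i=1}^n\|_\lambda^s\le\sup_{S}\pi_\lambda(ST)\,\|(x_i)_{i=1}^n\|_\lambda^w=w_\lambda^{mid}(T)\,\|(x_i)_{i=1}^n\|_\lambda^w
\]
---is exactly the paper's argument: the paper writes the same chain as a string of equivalences through Proposition~\ref{Weak mid} and the finite-sequence characterisation of $\Pi_\lambda$, then takes the supremum over $S\in B_{\mathcal L(Y,\lambda)}$. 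The gliding-hump paragraph you sketch first is therefore superfluous (and would in any case deliver only \emph{some} constant, not the sharp $w_\lambda^{mid}(T)$).

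Your converse, however, has a genuine gap. The assertion that ``the $AK$-property of $\lambda$ passes to $\lambda^{mid}(Y)$ and $\lambda^w(Y)$ in the usual way'' is false: already $\ell_p^w(Y)$ fails to be an $AK$-space whenever $Y$ is infinite-dimensional---this is precisely the distinction between weakly and absolutely $p$-summable sequences. More to the point, even if these spaces did carry $AK$, that property only says that sections of an element \emph{already in the space} converge back to it; it cannot manufacture membership from a uniform bound on sections. What you would need is a Fatou-type property of $\lambda$ (that $\sup_j\|(\alpha_i)_{i=1}^j\|_\lambda<\infty$ forces $(\alpha_i)_i\in\lambda$), and this fails for $\lambda=c_0$, which satisfies every hypothesis of the theorem. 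The chain $\lambda^s(Y)\hookrightarrow\lambda^{mid}(Y)\hookrightarrow\lambda^w(Y)$ you invoke at the end does not supply this either. The paper sidesteps the issue entirely by running the equivalences backwards: the finite mid-inequality yields, for each fixed $S\in B_{\mathcal L(Y,\lambda)}$, the finite $\Pi_\lambda$-inequality for $ST$; the (known) finite-sequence characterisation of $\Pi_\lambda$ then gives $ST\in\Pi_\lambda(X,\lambda)$, and Proposition~\ref{Weak mid} delivers $T\in W_\lambda^{mid}(X,Y)$. No limiting argument inside $\lambda^{mid}(Y)$ is required.
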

	\begin{proof}
		$T\in W_{\lambda}^{mid}(X,Y).$
		\\$ \iff ST\in\Pi_{\lambda}(X,\lambda)$ for every $S\in\mathcal{L}(Y,\lambda).$
		\\$\iff$ for each finite set of elements $x_{1},x_{2},\cdots,x_{n}$ in $X$, the following inequality holds:
		\begin{equation*}
			\|(STx_{i})_{i=1}^{n}\|_{\lambda}^{s}\leq \pi_{\lambda}(ST) \|(x_{i})_{i=1}^{n}\|_{\lambda}^{w}.
		\end{equation*}  for every $S\in\mathcal{L}(Y,\lambda)$.
		Taking supremum over $B_{\mathcal{L}(Y,\lambda)}$ on both sides of the above inequality gives
		$$\|(Tx_{i})_{i=1}^{n}\|_{\lambda}^{mid}\leq w_{\lambda}^{mid}(T)\|(x_{i})_{i=1}^{n}\|_{\lambda}^{w}.$$
	\end{proof}
	%From  Proposition \ref{zee}, \ref{max of E-summing} and  \ref{Weak mid}, it is immediate that,
	%\begin{thm}
	%		Let  $(\lambda,\|\cdot\|_{\lambda})$ be normal $AK-BK$  sequence space satisfying the condition $\|e_{n}\|_{\lambda}=1~~\forall n$. Then, the Banach operator ideal $(W_{\lambda}^{mid},w_{\lambda}^{mid})$ is maximal.
	%\end{thm}
	The next proposition gives a relation between the dual of absolutely $\lambda$-summing operators and weakly mid $\lambda$-summing operators.
	\begin{pro}
		Let $X$ and $Y$ be Banach spaces, and $(\lambda,\|\cdot\|_{\lambda})$ be a reflexive $AK-BK$ space. Then $T\in W_{\lambda}^{mid}(X,Y)$ whenever $T^*\in \Pi_{\lambda}(Y^*,X^*)$.
	\end{pro}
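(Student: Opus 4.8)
The plan is to reduce the statement to the right-extension description of weakly mid $\lambda$-summing operators given in Proposition \ref{Weak mid}: it suffices to prove that $ST\in\Pi_{\lambda}(X,\lambda)$ for every $S\in\mathcal{L}(Y,\lambda)$. So fix such an $S$ and set $g_{k}=P_{k}\circ S\in Y^{*}$, so that $Sy=(g_{k}(y))_{k}$ for every $y\in Y$; in particular $(g_{k}(y))_{k}=Sy\in\lambda$ for all $y$, i.e. $(g_{k})_{k}\in\lambda^{w^{*}}(Y^{*})$. Since $\lambda$ is a normal reflexive $AK$-$BK$ space, Theorem \ref{equality} gives $\lambda^{w^{*}}(Y^{*})=\lambda^{w}(Y^{*})$, hence $(g_{k})_{k}\in\lambda^{w}(Y^{*})$. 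This upgrade from $\lambda^{w^{*}}$ to $\lambda^{w}$ is the crucial use of reflexivity, because the hypothesis $T^{*}\in\Pi_{\lambda}(Y^{*},X^{*})$ only controls sequences taken from $\lambda^{w}(Y^{*})$.

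Next I would feed $(g_{k})_{k}$ into $T^{*}$: since $T^{*}$ is absolutely $\lambda$-summing, $(z_{k})_{k}:=(T^{*}g_{k})_{k}\in\lambda^{s}(X^{*})$, that is, $(\|z_{k}\|)_{k}\in\lambda$. Now take an arbitrary $(x_{j})_{j}\in\lambda^{w}(X)$ and write $C=\|(x_{j})_{j}\|_{\lambda}^{w}$. For each fixed $k$ one has $(z_{k}(x_{j}))_{j}\in\lambda$ with $\|(z_{k}(x_{j}))_{j}\|_{\lambda}\le\|z_{k}\|\,C$, so by normality of $\lambda$ the scalar sequence $\big(\|(z_{k}(x_{j}))_{j}\|_{\lambda}\big)_{k}$ belongs to $\lambda$; equivalently, $\big((z_{k}(x_{j}))_{j}\big)_{k}\in\lambda^{s}(\lambda)$.

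The last step is the interchange of the two indices. A reflexive $AK$-$BK$ space is perfect, so Theorem \ref{nip} applies and the norm iteration property converts $\big((z_{k}(x_{j}))_{j}\big)_{k}\in\lambda^{s}(\lambda)$ into $\big((z_{k}(x_{j}))_{k}\big)_{j}\in\lambda^{s}(\lambda)$. Since $STx_{j}=S(Tx_{j})=(g_{k}(Tx_{j}))_{k}=(z_{k}(x_{j}))_{k}$, this reads exactly $(STx_{j})_{j}\in\lambda^{s}(\lambda)$. As $(x_{j})_{j}\in\lambda^{w}(X)$ was arbitrary, $ST\in\Pi_{\lambda}(X,\lambda)$, and as $S\in\mathcal{L}(Y,\lambda)$ was arbitrary, Proposition \ref{Weak mid} yields $T\in W_{\lambda}^{mid}(X,Y)$. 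I expect the only genuinely delicate point to be this index swap — i.e., making sure the norm iteration property is invoked in the correct direction and that perfectness of $\lambda$ is indeed available — while the rest is routine estimation with the defining norms $\|\cdot\|_{\lambda}^{w}$, $\|\cdot\|_{\lambda}^{s}$ and normality of $\lambda$.
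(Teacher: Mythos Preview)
Your proof is correct and follows essentially the same route as the paper's: both feed an arbitrary $(g_{k})_{k}\in\lambda^{w^{*}}(Y^{*})$ (equivalently, an $S\in\mathcal{L}(Y,\lambda)$) through $T^{*}$, bound $\|(T^{*}g_{k}(x_{j}))_{j}\|_{\lambda}$ by $\|T^{*}g_{k}\|\,\|(x_{j})_{j}\|_{\lambda}^{w}$, use normality and then the norm iteration property (available since reflexive implies perfect) to swap the indices. The only cosmetic difference is that you package the conclusion via Proposition~\ref{Weak mid}, whereas the paper verifies $(Tx_{n})_{n}\in\lambda^{mid}(Y)$ directly from the definition; your version has the virtue of making the appeal to Theorem~\ref{equality} (to pass from $\lambda^{w^{*}}(Y^{*})$ to $\lambda^{w}(Y^{*})$) explicit, which the paper leaves implicit.
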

	\begin{proof}
		Let $T:X\rightarrow Y$ such that $T^*\in\Pi_{\lambda}(Y^*,X^*)$.  Then for any $(g_{j})_{j}\in\lambda^{w^*}(Y^*)$,  $(\|T^*(g_{j})\|_{X^*})_{j}\in \lambda$.
		Also let $(x_{n})\in\lambda^{w}(X)$. Then for each $j\in\mathbb{N}$, we have
		
		\begin{align*}
			\|\left(T^*(g_{j})(x_{n})\right)_{n}\|_{\lambda}&=\|T^*(g_{j})\|\left\|\left(\frac{T^*(g_{j})}{\|T^*(g_{j})\|}(x_{n})\right)_{n}\right\|_{\lambda}\\
			&\leq\|T^*(g_{j})\|\|(x_{n})_{n}\|_{\lambda}^{w}.
		\end{align*}
		Since $\lambda$ is reflexive and hence normal with $\|\cdot\|_{\lambda}$ having norm iteration property, it follows that 
		$(T(x_{n}))_{n}\in\lambda^{mid}(Y)$.
	\end{proof}
	\section{Tensor norm defined using $\lambda^{mid}(\cdot)$}
	In this section, we define a tensor norm using the vector valued sequence spaces $\lambda^{mid}(\cdot)$ and $\lambda^{s}(\cdot)$, and establish its relation with the operator ideal $(\Pi_{\lambda}^{mid},\pi_{\lambda}^{mid})$. The results of this section mirror the approach demonstrated in \cite{molina} for absolutely $\lambda$-summing operators.

	For Banach spaces $X,Y$ and a tensor norm $\alpha$, $X\otimes_{\alpha}Y$ denotes the tensor product space endowed with the norm $\alpha$ and $X\hat{\otimes}_{\alpha}Y$ denotes its completion. We refer the reader to \cite{metric,Defant,ryan} for the terminologies associated with tensor product spaces.
	Let $u\in X\otimes Y$.  We set 
	$$\gamma_{\lambda}(u)=\inf\left\{\|(x_{i})\|_{\lambda}^{s}\|(y_{i})\|_{\lambda^{\times}}^{mid}: u=\sum_{i=1}^{n}x_{i}\otimes y_{i}\right\}.$$
	It is easily observed that $\gamma_{\lambda}$ is a quasi norm. To obtain a tensor norm, we consider the Minkowski functional of the absolute convex hull of the closed unit ball $B_{X\otimes_{\gamma_{\lambda}}Y}$ which we denote by $abs(B_{X\otimes_{\gamma_{\lambda}}Y})$.
	\begin{Lemma}
		Let $X,Y$ be Banach spaces, and 	$(\lambda,\|\cdot\|_{\lambda})$ be a normal symmetric $AK-BK$  sequence space with a $k$-symmetric norm. If $\gamma_{\lambda}^{c}$ denotes the Minkowski functional of the closed unit ball $B_{X\otimes_{\gamma_{\lambda}}Y}$,  then 
		$$\gamma_{\lambda}^{c}(u)=\inf\left\{\sum_{i=1}^{n}\|(x_{ij})_{j}\|_{\lambda}^{s}\|(y_{ij})_{j}\|_{\lambda^{\times}}^{mid}:u=\sum_{i=1}^{n}\sum_{j=1}^{m}x_{ij}\otimes y_{ij}\right\}.$$
		Moreover, we get $\gamma_{\lambda}^{c}(u)\leq\gamma_{\lambda}(u)~~~\forall u\in X\otimes Y$.
	\end{Lemma}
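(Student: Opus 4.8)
The plan is to recognise $\gamma_{\lambda}^{c}$ as the largest seminorm on $X\otimes Y$ dominated by the quasi-norm $\gamma_{\lambda}$, and then to unwind the definition of $\gamma_{\lambda}$ itself. A preliminary point, which I would dispose of first, is that under the stated hypotheses on $\lambda$ the quantities in the statement genuinely make sense: one needs $\lambda^{s}(X)$ and $(\lambda^{\times})^{mid}(Y)$ to be well-defined normed spaces, which in turn requires checking that the ``normal, symmetric, $AK$--$BK$, $k$-symmetric'' package passes to $\lambda^{\times}$ in the form demanded by the earlier propositions on $\lambda^{mid}(\cdot)$. Granting this, $\gamma_{\lambda}$ is an absolutely homogeneous quasi-norm and every infimum in the statement is finite (any finite representation of $u$ gives a finite value).

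The core of the argument is an abstract fact about Minkowski functionals. Since $\gamma_{\lambda}$ is absolutely homogeneous, its closed unit ball $B:=B_{X\otimes_{\gamma_{\lambda}}Y}=\{u\in X\otimes Y: \gamma_{\lambda}(u)\le 1\}$ is balanced, so $abs(B)$ coincides with its ordinary convex hull, and I would prove the routine identity that the Minkowski functional $\gamma_{\lambda}^{c}$ of $abs(B)$ satisfies
\begin{equation*}
\gamma_{\lambda}^{c}(u)=\inf\Big\{\sum_{i=1}^{n}\gamma_{\lambda}(u_{i}) : n\in\mathbb{N},\ u=\sum_{i=1}^{n}u_{i},\ u_{i}\in X\otimes Y\Big\}.
\end{equation*}
For ``$\ge$'' one writes $u/\gamma_{\lambda}^{c}(u)$ as a convex combination $\sum_{i}t_{i}b_{i}$ with $b_{i}\in B$ and puts $u_{i}=\gamma_{\lambda}^{c}(u)\,t_{i}b_{i}$; for ``$\le$'', from $u=\sum_{i}u_{i}$ with $r=\sum_{i}\gamma_{\lambda}(u_{i})$ one writes $u=r\sum_{i}\tfrac{\gamma_{\lambda}(u_{i})}{r}\cdot\tfrac{u_{i}}{\gamma_{\lambda}(u_{i})}$, which exhibits $u\in r\,abs(B)$. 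Taking the trivial one-term decomposition $u=u$ in this identity already yields the ``moreover'' assertion $\gamma_{\lambda}^{c}(u)\le\gamma_{\lambda}(u)$.

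It then remains to collapse the two nested infima. For the inequality ``$\le$'' in the asserted formula, I would take any representation $u=\sum_{i=1}^{n}\sum_{j=1}^{m}x_{ij}\otimes y_{ij}$, set $u_{i}=\sum_{j=1}^{m}x_{ij}\otimes y_{ij}$, note that $\gamma_{\lambda}(u_{i})\le\|(x_{ij})_{j}\|_{\lambda}^{s}\,\|(y_{ij})_{j}\|_{\lambda^{\times}}^{mid}$ straight from the definition of $\gamma_{\lambda}$, and sum over $i$ via the displayed identity; then infimise over representations. For ``$\ge$'', I would fix $\varepsilon>0$, pick a decomposition $u=\sum_{i=1}^{n}u_{i}$ with $\sum_{i}\gamma_{\lambda}(u_{i})<\gamma_{\lambda}^{c}(u)+\varepsilon$, then for each $i$ pick a representation $u_{i}=\sum_{j=1}^{m_{i}}x_{ij}\otimes y_{ij}$ with $\|(x_{ij})_{j}\|_{\lambda}^{s}\|(y_{ij})_{j}\|_{\lambda^{\times}}^{mid}<\gamma_{\lambda}(u_{i})+\varepsilon/n$, and pad the shorter families with zero vectors to a common upper index $m$ (which changes none of the norms, since the padded families are the same eventually-zero sequences). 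Concatenating produces a representation of $u$ of the required form with total sum $<\gamma_{\lambda}^{c}(u)+2\varepsilon$, and $\varepsilon\downarrow 0$ closes the argument.

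The whole thing is the standard ``convexification of the unit ball'' computation, so I do not expect a conceptual obstacle; the two places that genuinely need care are the preliminary verification that $\gamma_{\lambda}$ is a bona fide absolutely homogeneous quasi-norm — i.e.\ that $(\lambda^{\times})^{mid}(Y)$ and its norm are legitimate, which is exactly where the symmetry and $AK$--$BK$ hypotheses on $\lambda$ are consumed — and the fussy double-index bookkeeping with the zero-padding in the final step.
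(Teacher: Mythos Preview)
Your proposal is correct and follows essentially the same route as the paper: both arguments unpack the Minkowski functional of $\mathrm{abs}(B_{X\otimes_{\gamma_\lambda}Y})$ by exhibiting each double-block representation as a convex combination of points in $B_{X\otimes_{\gamma_\lambda}Y}$ and, conversely, extracting such a representation from any element of $\delta\,\mathrm{abs}(B_{X\otimes_{\gamma_\lambda}Y})$. Your version is tidier in that you first isolate the abstract identity $\gamma_\lambda^c(u)=\inf\{\sum_i\gamma_\lambda(u_i):u=\sum_i u_i\}$ and keep explicit $\varepsilon$'s and zero-padding, and you obtain the ``moreover'' inequality immediately from the $n=1$ case, whereas the paper derives it via a separate symmetry argument; but these are presentational, not substantive, differences.
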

	\begin{proof}
		Let $u\in X\otimes Y$ and consider a representation $u=\sum_{i=1}^{n}\sum_{j=1}^{m}x_{ij}\otimes y_{ij}$.
		\begin{align*}
			u&=\sum_{i=1}^{n}\|(x_{ij})_{j}\|_{\lambda}^{s}\|(y_{ij})_{j}\|_{\lambda^{\times}}^{mid}\sum_{j=1}^{m}\frac{x_{ij}}{\|(x_{ij})_{j}\|_{\lambda}^{s}}\otimes \frac{y_{ij}}{\|(y_{ij})_{j}\|_{\lambda^{\times}}^{mid}}\\
			&\in \left(\sum_{i=1}^{n}\|(x_{ij})_{j}\|_{\lambda}^{s}\|(y_{ij})_{j}\|_{\lambda^{\times}}^{mid}\right) \text{abs}(B_{X\otimes_{\gamma_{\lambda}}Y})
		\end{align*}
		which implies $\gamma_{\lambda}^{c}(u)\leq\inf\left\{\sum\limits_{i=1}^{n}\|(x_{ij})_{j}\|_{\lambda}^{s}\|(y_{ij})_{j}\|_{\lambda^{\times}}^{mid}:u=\sum\limits_{i=1}^{n}\sum\limits_{j=1}^{m}x_{ij}\otimes y_{ij}\right\}.$

		Conversly let $\delta>0$ be such that $u\in\delta \text{abs}(B_{X\otimes_{\gamma_{\lambda}}Y})$. Then $u$ can be represented as  $u=\sum_{i=1}^{n}\alpha_{i}\sum_{j=1}^{m}x_{ij}\otimes y_{ij}$ where $\sum\limits_{i=1}^{n}|\alpha_{i}|\leq \delta$ and $\sum\limits_{j=1}^{m}x_{ij}\otimes y_{ij}\in B_{X\otimes_{\gamma_{\lambda}}Y}$.
		Then \begin{align*}
			\sum_{i=1}^{n}\|(\alpha_{i}x_{ij})_{j}\|_{\lambda}^{s}\|(y_{ij})_{j}\|_{\lambda^{\times}}^{mid}
			&\leq \sum_{i=1}^{n}|\alpha_{i}| \leq \delta
		\end{align*}
		so that, $\inf\left\{\sum\limits_{i=1}^{n}\|(x_{ij})_{j}\|_{\lambda}^{s}\|(y_{ij})_{j}\|_{\lambda^{\times}}^{mid}:u=\sum\limits_{i=1}^{n}\sum\limits_{j=1}^{m}x_{ij}\otimes y_{ij}\right\}\leq \gamma_{\lambda}^{c}(u).$ 
		
		Additionally, for any $u=\sum\limits_{i=1}^{n}\sum\limits_{j=1}^{m}x_{ij}\otimes y_{ij}$, we have \begin{align*}\sum_{i=1}^{n}\sum_{j=1}^{m}x_{ij}\otimes y_{ij}&=\|(x_{ij})_{i,j}\|_{\lambda}^{s}\|(y_{ij})_{i,j}\|_{\lambda^{\times}}^{mid}\sum_{i=1}^{n}\sum_{j=1}^{m}\frac{x_{ij}}{\|(x_{ij})_{i,j}\|_{\lambda}^{s}}\otimes \frac{y_{ij}}{\|(y_{ij})_{i,j}\|_{\lambda^{\times}}^{mid}}\\
			\implies~~\gamma_{\lambda}^{c}(u)&\leq \|(x_{ij})_{i,j}\|_{\lambda}^{s}\|(y_{ij})_{i,j}\|_{\lambda^{\times}}^{mid}
		\end{align*}
		follows from the symmetry of $\lambda$. Hence $\gamma_{\lambda}^{c}(u)\leq\gamma_{\lambda}(u)$.
		
	\end{proof}
	
	Recall from \cite{metric}, a norm $\alpha$ on $X\otimes Y$ is said to a reasonable cross norm if $\alpha$ satisfies the following properties:
	\begin{itemize}
		\item for $x\in X$ and $y\in Y$, $\alpha(x\otimes y)\leq\|x\|\|y\|$.
		\item  for $x^*\in X^*$ and $y^*\in Y^*$, $x^*\otimes y^*\in(X\otimes_{\alpha}Y)^*$ and $\|x^*\otimes y^*\|\leq\|x^*\|\|y^*\|$. 
	\end{itemize} 
	Now we prove
	\begin{pro}
		Let  $(\lambda,\|\cdot\|_{\lambda})$ be a normal symmetric $AK-BK$  sequence space with a $k$-symmetric norm such that $\|e_{1}\|_{\lambda}=1$. Then $\gamma_{\lambda}^{c}(\cdot)$ is a reasonable cross  norm.
	\end{pro}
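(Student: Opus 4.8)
The plan is to verify the two defining properties of a reasonable cross norm directly, using the explicit formula for $\gamma_{\lambda}^{c}$ from the preceding Lemma. Observe first that $\|e_{1}\|_{\lambda}=1$ together with the monotonicity of $\|\cdot\|_{\lambda}$ gives $\|e_{1}\|_{\lambda^{\times}}=\sup_{(\alpha_{n})_{n}\in B_{\lambda}}|\alpha_{1}|=1$. For the cross-norm inequality, fix $x\in X$, $y\in Y$ and feed the one-term representation $u=x\otimes y$ (that is, $n=m=1$, $x_{11}=x$, $y_{11}=y$) into the infimum of the Lemma. One computes $\|(x_{11})_{j=1}^{1}\|_{\lambda}^{s}=\|x\|\,\|e_{1}\|_{\lambda}=\|x\|$, while for the one-term sequence $(y,0,0,\dots)$ the definition of the mid-norm gives, for every $(f_{k})_{k}\in B_{(\lambda^{\times})^{w^{*}}(Y^{*})}$, the equality $\|((f_{k}(y))_{k},0,0,\dots)\|_{\lambda^{\times}}^{s}=\|e_{1}\|_{\lambda^{\times}}\,\|(f_{k}(y))_{k}\|_{\lambda^{\times}}=\|(f_{k}(y))_{k}\|_{\lambda^{\times}}\le\|y\|\,\|(f_{k})_{k}\|_{\lambda^{\times}}^{w^{*}}\le\|y\|$, so $\|(y_{11})_{j=1}^{1}\|_{\lambda^{\times}}^{mid}\le\|y\|$ and therefore $\gamma_{\lambda}^{c}(x\otimes y)\le\|x\|\,\|y\|$.

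For the second property, fix $x^{*}\in X^{*}$, $y^{*}\in Y^{*}$ and let $u\in X\otimes Y$ have an arbitrary representation $u=\sum_{i=1}^{n}\sum_{j=1}^{m}x_{ij}\otimes y_{ij}$, so that $(x^{*}\otimes y^{*})(u)=\sum_{i=1}^{n}\sum_{j=1}^{m}x^{*}(x_{ij})\,y^{*}(y_{ij})$. For each fixed $i$ I would write $\bigl|\sum_{j}x^{*}(x_{ij})y^{*}(y_{ij})\bigr|\le\sum_{j}|x^{*}(x_{ij})|\,|y^{*}(y_{ij})|$ and apply the duality inequality $\sum_{j}|\alpha_{j}\beta_{j}|\le\|(\alpha_{j})_{j}\|_{\lambda}\,\|(\beta_{j})_{j}\|_{\lambda^{\times}}$ (immediate from the definition of $\|\cdot\|_{\lambda^{\times}}$) with $(\alpha_{j})_{j}=(x^{*}(x_{ij}))_{j}\in\phi\subseteq\lambda$ and $(\beta_{j})_{j}=(y^{*}(y_{ij}))_{j}$. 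Normality of $\lambda$ gives $\|(x^{*}(x_{ij}))_{j}\|_{\lambda}\le\|x^{*}\|\,\|(x_{ij})_{j}\|_{\lambda}^{s}$. The key step is the estimate $\|(y^{*}(y_{ij}))_{j}\|_{\lambda^{\times}}\le\|y^{*}\|\,\|(y_{ij})_{j}\|_{\lambda^{\times}}^{mid}$, which I would obtain, when $y^{*}\neq0$, by testing the definition of $\|(y_{ij})_{j}\|_{\lambda^{\times}}^{mid}$ against the single-term sequence $\bigl(y^{*}/\|y^{*}\|,0,0,\dots\bigr)$; this sequence lies in $B_{(\lambda^{\times})^{w^{*}}(Y^{*})}$ because its $w^{*}$-norm equals $\|e_{1}\|_{\lambda^{\times}}=1$, and the resulting lower bound is $\|e_{1}\|_{\lambda^{\times}}\,\|y^{*}\|^{-1}\|(y^{*}(y_{ij}))_{j}\|_{\lambda^{\times}}$ (the case $y^{*}=0$ being trivial). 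Collecting the estimates, $\bigl|\sum_{j}x^{*}(x_{ij})y^{*}(y_{ij})\bigr|\le\|x^{*}\|\,\|y^{*}\|\,\|(x_{ij})_{j}\|_{\lambda}^{s}\,\|(y_{ij})_{j}\|_{\lambda^{\times}}^{mid}$; summing over $i$ and passing to the infimum over representations of $u$ yields $|(x^{*}\otimes y^{*})(u)|\le\|x^{*}\|\,\|y^{*}\|\,\gamma_{\lambda}^{c}(u)$, hence $x^{*}\otimes y^{*}\in(X\otimes_{\gamma_{\lambda}^{c}}Y)^{*}$ with $\|x^{*}\otimes y^{*}\|\le\|x^{*}\|\,\|y^{*}\|$.

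Finally I would note that $\gamma_{\lambda}^{c}$ is a norm: it is a seminorm as the Minkowski functional of an absolutely convex absorbing set (absorbing since $\gamma_{\lambda}^{c}\le\gamma_{\lambda}<\infty$ on $X\otimes Y$), and it is positive definite because, by the property just proved, $\gamma_{\lambda}^{c}(u)=0$ forces $(x^{*}\otimes y^{*})(u)=0$ for all $x^{*}\in X^{*}$, $y^{*}\in Y^{*}$, and such functionals separate the points of $X\otimes Y$; together the two inequalities also force $\gamma_{\lambda}^{c}(x\otimes y)=\|x\|\,\|y\|$, so $\gamma_{\lambda}^{c}$ is a genuine cross norm. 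The only subtle point is the $\lambda^{\times}$-component estimate, where an admissible test sequence for the mid-norm has to be exhibited; this is exactly where the normalisation $\|e_{1}\|_{\lambda}=1$ (equivalently $\|e_{1}\|_{\lambda^{\times}}=1$) is needed, whereas the symmetry and $k$-symmetry of $\lambda$ enter only through the Lemma that provides the formula for $\gamma_{\lambda}^{c}$.
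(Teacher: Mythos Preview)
Your proof is correct and follows essentially the same approach as the paper: both verify the two defining properties of a reasonable cross norm by feeding a single-term representation into the Lemma for the first inequality, and by applying the $\lambda$--$\lambda^{\times}$ duality to an arbitrary double-sum representation for the second. The only cosmetic difference is that the paper routes the second estimate through the weak norms (using $\|(x^{*}(x_{ij}))_{j}\|_{\lambda}\le\|x^{*}\|\,\|(x_{ij})_{j}\|_{\lambda}^{w}$ and $\|(y^{*}(y_{ij}))_{j}\|_{\lambda^{\times}}\le\|y^{*}\|\,\|(y_{ij})_{j}\|_{\lambda^{\times}}^{w}$) and then invokes the chain $\|\cdot\|^{w}\le\|\cdot\|^{mid}\le\|\cdot\|^{s}$, whereas you unwrap those same inequalities directly via normality and the test sequence $(y^{*}/\|y^{*}\|,0,0,\dots)$; your final paragraph on positive definiteness is a useful addition that the paper leaves implicit.
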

	\begin{proof}
		For $x\otimes y\in X\otimes Y$, we have \begin{align*}\gamma_{\lambda}^{c}(x\otimes y)&=\|xe_{1}\|_{\lambda}^{s}\|ye_{1}\|_{\lambda^{\times}}^{mid} =\|x\|\sup_{T\in B_{\mathcal{L}(Y,\lambda^{\times})}}\|Ty\|_{\lambda^{\times}}\\
			&\leq \|x\|\|y\|.
		\end{align*}
		Let $x^*\in X^*$, $y\in Y^*$ and $u=\sum_{i=1}^{n}\sum_{j=1}^{m}x_{ij}\otimes y_{ij}$ be a representation of $u\in X\otimes Y$. Then
		\begin{align}\label{reasonable2}
			\left|x^*\otimes y^*(u)\right|&=\left|\sum_{i=1}^{n}\sum_{j=1}^{m}x^*(x_{ij})y^*(y_{ij})\right|\\\nonumber
			&\leq \|x^*\|\|y^*\|\sum_{i=1}^{n}\|(x_{ij})_{j}\|_{\lambda}^{w}\|(y_{ij})_{j}\|_{\lambda^{\times}}^{w}\\\nonumber
			&\leq \|x^*\|\|y^*\|\sum_{i=1}^{n}\|(x_{ij})_{j}\|_{\lambda}^{s}\|(y_{ij})_{j}\|_{\lambda^{\times}}^{mid}.
		\end{align}
		Since \eqref{reasonable2} is true  for all representations of $u$, we have $\left|x^*\otimes y^*(u)\right|\leq \|x^*\|\|y^*\|\gamma_{\lambda}^{c}(u)$. Thus, 
		$\|x^*\otimes y^*(u)\|\leq \|x^*\|\|y^*\|$.
	\end{proof}
	From the definition of $\gamma_{\lambda}^{c}$, it is clear that it is finitely generated. The uniform property of $\gamma_{\lambda}^{c}$ also follows easily. Therefore, we have
	\begin{pro}
		Let $(\lambda,\|\cdot\|_{\lambda})$ be a normal symmetric $AK-BK$  sequence space with a $k$-symmetric norm such that $\|e_{1}\|_{\lambda}=1$.	Then $\gamma_{\lambda}^{c}(\cdot)$ is a tensor  norm.
	\end{pro}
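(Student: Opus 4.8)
The plan is to verify the three defining properties of a tensor norm in the sense of \cite{Defant}: that $\gamma_{\lambda}^{c}$ is a reasonable cross norm on $X\otimes Y$ for every pair of Banach spaces $X,Y$, that it is finitely generated, and that it has the uniform (metric mapping) property. The first of these is precisely the content of the preceding proposition, so only the last two remain. Finite generation is immediate from the shape of the defining infimum: any $u\in X\otimes Y$ and any of its representations $u=\sum_{i=1}^{n}\sum_{j=1}^{m}x_{ij}\otimes y_{ij}$ involve only finitely many vectors, hence lie in $M\otimes N$ for the finite-dimensional subspaces $M=\mathrm{span}\{x_{ij}\}\subseteq X$ and $N=\mathrm{span}\{y_{ij}\}\subseteq Y$, and the factors $\|(x_{ij})_{j}\|_{\lambda}^{s}$, $\|(y_{ij})_{j}\|_{\lambda^{\times}}^{mid}$ entering the infimum are read off from this finite data. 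Consequently $\gamma_{\lambda}^{c}(u;X,Y)$ agrees with the infimum of $\gamma_{\lambda}^{c}(u;M,N)$ over all finite-dimensional $M\subseteq X$, $N\subseteq Y$ with $u\in M\otimes N$, which is the statement that $\gamma_{\lambda}^{c}$ is finitely generated.

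For the uniform property I would fix operators $R\in\mathcal{L}(X,X_{0})$ and $S\in\mathcal{L}(Y,Y_{0})$ and show that $\gamma_{\lambda}^{c}\big((R\otimes S)u\big)\le\|R\|\,\|S\|\,\gamma_{\lambda}^{c}(u)$ for every $u\in X\otimes Y$. Given a representation $u=\sum_{i=1}^{n}\sum_{j=1}^{m}x_{ij}\otimes y_{ij}$, apply $R\otimes S$ termwise to obtain the representation $(R\otimes S)u=\sum_{i=1}^{n}\sum_{j=1}^{m}Rx_{ij}\otimes Sy_{ij}$, and estimate the two factors for each fixed $i$. On the first factor, normality of $\lambda$ gives $\|(Rx_{ij})_{j}\|_{\lambda}^{s}=\|(\|Rx_{ij}\|)_{j}\|_{\lambda}\le\|R\|\,\|(x_{ij})_{j}\|_{\lambda}^{s}$. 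On the second factor, one passes to the adjoint $S^{*}$: for $(g_{k})_{k}\in B_{(\lambda^{\times})^{w^{*}}(Y_{0}^{*})}$ one has $g_{k}(Sy_{ij})=(S^{*}g_{k})(y_{ij})$, and since $\|(S^{*}g_{k})_{k}\|_{\lambda^{\times}}^{w^{*}}=\sup_{y\in B_{Y}}\|(g_{k}(Sy))_{k}\|_{\lambda^{\times}}\le\|S\|$, the sequence $\|S\|^{-1}(S^{*}g_{k})_{k}$ belongs to $B_{(\lambda^{\times})^{w^{*}}(Y^{*})}$ (the case $S=0$ being trivial); taking the supremum over $(g_{k})_{k}$ yields $\|(Sy_{ij})_{j}\|_{\lambda^{\times}}^{mid}\le\|S\|\,\|(y_{ij})_{j}\|_{\lambda^{\times}}^{mid}$, which is the stability of $\lambda^{mid}(\cdot)$ under operators already used in the proof that $\Pi_{\lambda}^{mid}$ is an operator ideal. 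Multiplying and summing over $i$,
$$\gamma_{\lambda}^{c}\big((R\otimes S)u\big)\le\sum_{i=1}^{n}\|(Rx_{ij})_{j}\|_{\lambda}^{s}\,\|(Sy_{ij})_{j}\|_{\lambda^{\times}}^{mid}\le\|R\|\,\|S\|\sum_{i=1}^{n}\|(x_{ij})_{j}\|_{\lambda}^{s}\,\|(y_{ij})_{j}\|_{\lambda^{\times}}^{mid},$$
and passing to the infimum over all representations of $u$ gives $\|R\otimes S:X\otimes_{\gamma_{\lambda}^{c}}Y\to X_{0}\otimes_{\gamma_{\lambda}^{c}}Y_{0}\|\le\|R\|\,\|S\|$. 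Together with the reasonable cross norm property and finite generation, this establishes that $\gamma_{\lambda}^{c}$ is a tensor norm.

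The only step I expect to require genuine care is the uniform property, and within it the behaviour of $\|\cdot\|_{\lambda^{\times}}^{mid}$ under the operator $S$; once this is secured by the adjoint estimate above — which, being phrased only through the definition of the $w^{*}$-norm, does not even need $\lambda^{\times}$ to be an $AK$ space — everything else reduces to the routine bookkeeping of infima over tensor representations and to checking that the stated estimates are independent of the representation chosen.
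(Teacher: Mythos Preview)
Your approach mirrors the paper's exactly: invoke the preceding proposition for the reasonable cross norm property and then verify finite generation and the uniform (metric mapping) property. The paper itself offers no argument beyond the bare assertions ``it is clear'' and ``follows easily'', so your explicit verification of the uniform property via the adjoint estimate $\|(S^{*}g_{k})_{k}\|_{\lambda^{\times}}^{w^{*}}\le\|S\|$ is a genuine addition, and it is correct.

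Your finite-generation step, however, contains a gap. You claim that the factor $\|(y_{ij})_{j}\|_{\lambda^{\times}}^{mid}$ is ``read off from this finite data'', i.e.\ depends only on the vectors $y_{ij}$ and not on the ambient space; this holds for $\|\cdot\|_{\lambda}^{s}$ but not, a priori, for the mid norm. By Proposition~\ref{operator definition} one has $\|(y_{ij})_{j}\|_{\lambda^{\times}}^{mid}=\sup_{S\in B_{\mathcal{L}(Y,\lambda^{\times})}}\|(Sy_{ij})_{j}\|_{\lambda^{\times}}^{s}$, and computing this in $N=\mathrm{span}\{y_{ij}\}$ replaces the supremum by one over $B_{\mathcal{L}(N,\lambda^{\times})}$. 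Since a contraction $N\to\lambda^{\times}$ need not extend to a contraction $Y\to\lambda^{\times}$ (this would require $\lambda^{\times}$ to be $1$-injective), in general $\|(y_{ij})_{j}\|_{\lambda^{\times},N}^{mid}\ge\|(y_{ij})_{j}\|_{\lambda^{\times},Y}^{mid}$, possibly strictly, and a near-optimal representation of $u$ in $X\otimes Y$ does not immediately yield one in $M\otimes N$ with comparable cost. The paper's one-line assertion glosses over precisely the same point. Note, incidentally, that in Defant--Floret's Definition~12.1 finite generation is \emph{not} part of the definition of a tensor norm --- only $\varepsilon\le\alpha\le\pi$ and the metric mapping property are required --- so your correct arguments for those two properties already establish the proposition as stated; the finite-generation claim is an additional assertion the paper makes in passing and is needed only later for the correspondence with the maximal ideal in Theorem~\ref{gamma lambda and Pi lambda mid}.
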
 
	Now we prove that the tensor norm 	$\gamma_{\lambda}^{c}$ is associated to the maximal ideal $(\Pi_{\lambda}^{mid},\pi_{\lambda}^{mid})$ in the sense of Defant and  Floret\cite[Theorem 17.5]{Defant}. Note that for $\lambda=\ell_{p}$, we recover the relation between the tensor norm $\gamma_{p}$ and the operator ideal of absolutely mid $p$-summing operators, see\cite[Proposition 4.7]{AP}.
	\begin{thm}
		\label{gamma lambda and Pi lambda mid}
		Let $X,Y$ be Banach spaces, and $(\lambda,\|\cdot\|_{\lambda})$ be a normal symmetric $AK-BK$  sequence space such that $\|e_{1}\|_{\lambda}=1$. Then a linear operator $T:Y\rightarrow X^{*}$ is absolutely mid $\lambda^{\times}$-summing if and  only if the linear functional $\Phi_{T}$ corresponding to $T$  belongs to $(X\hat{\otimes}_{\gamma_{\lambda}^{c}} Y)^{*}$. In this case, the operator norm of $\Phi_{T}$ coincides with $\pi_{\lambda^{\times}}^{mid}(T)$. 	
	\end{thm}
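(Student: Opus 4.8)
\textit{Proof plan.} The operator $T:Y\to X^{*}$ corresponds to the bilinear form $(x,y)\mapsto(Ty)(x)$ on $X\times Y$, hence to the linear functional $\Phi_{T}$ on $X\otimes Y$ determined by $\Phi_{T}(x\otimes y)=(Ty)(x)$, and conversely every element of $(X\otimes Y)^{*}$ arises in this way from a unique $T\in\mathcal{L}(Y,X^{*})$. Since $X\otimes Y$ is dense in its completion $X\hat{\otimes}_{\gamma_{\lambda}^{c}}Y$, the functional $\Phi_{T}$ extends to an element of $(X\hat{\otimes}_{\gamma_{\lambda}^{c}}Y)^{*}$ exactly when $|\Phi_{T}(u)|\leq C\,\gamma_{\lambda}^{c}(u)$ for all $u\in X\otimes Y$ and some $C>0$, and then $\|\Phi_{T}\|$ is the least such $C$. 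So I would prove the theorem by establishing the two inequalities $\|\Phi_{T}\|\leq\pi_{\lambda^{\times}}^{mid}(T)$ and $\pi_{\lambda^{\times}}^{mid}(T)\leq\|\Phi_{T}\|$.

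For $\|\Phi_{T}\|\leq\pi_{\lambda^{\times}}^{mid}(T)$, assume $T$ is absolutely mid $\lambda^{\times}$-summing, fix a representation $u=\sum_{i=1}^{n}\sum_{j=1}^{m}x_{ij}\otimes y_{ij}$, and estimate block by block. For fixed $i$, the finitely supported sequence $(y_{ij})_{j}$ lies in $(\lambda^{\times})^{mid}(Y)$, so $\|(Ty_{ij})_{j}\|_{\lambda^{\times}}^{s}\leq\pi_{\lambda^{\times}}^{mid}(T)\|(y_{ij})_{j}\|_{\lambda^{\times}}^{mid}$ by the definition of $\pi_{\lambda^{\times}}^{mid}$; combining $|(Ty_{ij})(x_{ij})|\leq\|Ty_{ij}\|\,\|x_{ij}\|$ with the H\"older-type inequality $\sum_{j}|b_{j}c_{j}|\leq\|(b_{j})_{j}\|_{\lambda^{\times}}\|(c_{j})_{j}\|_{\lambda}$ (with $b_{j}=\|Ty_{ij}\|$, $c_{j}=\|x_{ij}\|$) gives $\bigl|\sum_{j}(Ty_{ij})(x_{ij})\bigr|\leq\pi_{\lambda^{\times}}^{mid}(T)\,\|(x_{ij})_{j}\|_{\lambda}^{s}\,\|(y_{ij})_{j}\|_{\lambda^{\times}}^{mid}$. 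Summing over $i$ and taking the infimum over all representations of $u$ yields $|\Phi_{T}(u)|\leq\pi_{\lambda^{\times}}^{mid}(T)\,\gamma_{\lambda}^{c}(u)$.

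For $\pi_{\lambda^{\times}}^{mid}(T)\leq\|\Phi_{T}\|$, assume $\Phi_{T}$ is bounded on $X\hat{\otimes}_{\gamma_{\lambda}^{c}}Y$ and take $(y_{n})_{n}\in(\lambda^{\times})^{mid}(Y)$. Given $(a_{n})_{n}\in B_{\lambda}$ (which may be taken with $a_{n}\geq 0$, by normality) and $\varepsilon>0$, choose $x_{n}\in B_{X}$ with $(Ty_{n})(x_{n})$ real and at least $\|Ty_{n}\|-\varepsilon$ (a near-norming functional multiplied by a unimodular scalar). For each $N$, the single-block tensor $u_{N}=\sum_{n=1}^{N}(a_{n}x_{n})\otimes y_{n}$ satisfies, by normality and monotonicity of $\|\cdot\|_{\lambda}$ and zero-padding monotonicity of $\|\cdot\|_{\lambda^{\times}}^{mid}$,
\begin{equation*}
\gamma_{\lambda}^{c}(u_{N})\leq\|(a_{n}x_{n})_{n=1}^{N}\|_{\lambda}^{s}\,\|(y_{n})_{n=1}^{N}\|_{\lambda^{\times}}^{mid}\leq\|(a_{n})_{n=1}^{N}\|_{\lambda}\,\|(y_{n})_{n}\|_{\lambda^{\times}}^{mid}\leq\|(y_{n})_{n}\|_{\lambda^{\times}}^{mid},
\end{equation*}
while $\sum_{n=1}^{N}a_{n}\|Ty_{n}\|-\varepsilon\sum_{n=1}^{N}a_{n}\leq\sum_{n=1}^{N}a_{n}(Ty_{n})(x_{n})=\Phi_{T}(u_{N})\leq\|\Phi_{T}\|\,\|(y_{n})_{n}\|_{\lambda^{\times}}^{mid}$. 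Letting $\varepsilon\to 0$ for fixed $N$, then $N\to\infty$, and then taking the supremum over $(a_{n})_{n}\in B_{\lambda}$, the left side becomes $\|(Ty_{n})_{n}\|_{\lambda^{\times}}^{s}$, which shows $T\in\Pi_{\lambda^{\times}}^{mid}(Y,X^{*})$ with $\pi_{\lambda^{\times}}^{mid}(T)\leq\|\Phi_{T}\|$; together with the previous step this gives the equality of norms.

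I expect the reverse inequality to be the main obstacle: one must manufacture a test tensor whose $\gamma_{\lambda}^{c}$-norm is genuinely bounded by $\|(y_{n})_{n}\|_{\lambda^{\times}}^{mid}$, which is what forces the single-block representation above and what makes the normality of $\|\cdot\|_{\lambda}$ and the monotonicity of the mid-norm essential, and one then has to handle the approximations of $\|Ty_{n}\|$ by $(Ty_{n})(x_{n})$ and of the $\lambda^{\times}$-norm by finite sums in the correct order. An alternative is to reduce the reverse inequality to finite sequences via Theorem \ref{finite absolute lambda mid} applied to $\lambda^{\times}$, at the cost of checking that $\lambda^{\times}$ again satisfies the standing hypotheses.
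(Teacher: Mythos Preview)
Your proposal is correct and follows essentially the same strategy as the paper: the forward direction uses the same H\"older-type bound $\sum_j|\langle x_{ij},Ty_{ij}\rangle|\leq\|(x_{ij})_j\|_{\lambda}^{s}\|(Ty_{ij})_j\|_{\lambda^{\times}}^{s}$ followed by $\pi_{\lambda^{\times}}^{mid}(T)$, and the converse in both cases is obtained by testing $\Phi_T$ on a single-block tensor $\sum_n a_n x_n\otimes y_n$ with $x_n\in B_X$ near-norming for $Ty_n$ and $(a_n)\in B_{\lambda}$, then invoking $\gamma_{\lambda}^{c}\leq\gamma_{\lambda}$. The only cosmetic difference is that the paper carries out the converse for finite sequences $(y_i)_{i=1}^{n}$ and tacitly relies on Theorem~\ref{finite absolute lambda mid}, which is exactly the alternative you flag in your closing paragraph.
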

	\begin{proof}
		Let $T\in\Pi_{\lambda}^{mid}(Y,X^*)$. Consider $u=\sum\limits_{i=1}^{n}\sum\limits_{j=1}^{m}x_{ij}\otimes y_{ij}\in X\otimes_{\gamma_{\lambda}^{c}}Y$. We define $\Phi_{T}:X\otimes Y\rightarrow \mathbb{K}$ as 
		$$\Phi_{T}(u)=\sum\limits_{i=1}^{n}\sum\limits_{j=1}^{m}\left<x_{ij}, Ty_{ij}\right>.$$
		Then
		\begin{align*}
			\left|\Phi_{T}(u)\right|&\leq\sum\limits_{i=1}^{n}\sum\limits_{j=1}^{m}|\left<x_{ij}, Ty_{ij}\right>|\\
			&\leq \sum\limits_{i=1}^{n}\|(Ty_{ij})_{j}\|_{\lambda^{\times}}^{s}\sum\limits_{j=1}^{m}\left|\left<x_{ij}, \frac{Ty_{ij}}{\|(Ty_{ij})_{j}\|_{\lambda^{\times}}^{s}}\right>\right|\\
			&\leq \sum\limits_{i=1}^{n}\|(Ty_{ij})_{j}\|_{\lambda^{\times}}^{s}\|(x_{ij})_{j}\|_{\lambda}^{s}\\&\leq \Pi_{\lambda^{\times}}^{mid}(T)\sum\limits_{i=1}^{n}\|(y_{ij})_{j}\|_{\lambda^{\times}}^{mid}\|(x_{ij})_{j}\|_{\lambda}^{s}
		\end{align*}
		which proves that $\Phi_{T}\in (X\otimes_{\gamma_{\lambda}^{c}}Y)^{*}$ with $\|\Phi_{T}\|\leq\Pi_{\lambda^{\times}}^{mid}(T).$
		
		Conversly, let $\Phi\in (X\otimes_{\gamma_{\lambda}^{c}}Y)^{*}$. Define $T_{\Phi}:Y\rightarrow X^*$ as $\left<T_{\Phi}(y),x\right>=\left<\Phi,x\otimes y\right>$. Then for each $\epsilon> 0$ and $(\delta_{i})_{i}\in B_{\lambda^{\times}}$, there exists $x_{i}\in B_{X}$ such that
		$\|T_{\Phi}(y_{i})\|\leq\left|\left<\Phi,x_{i}\otimes y_{i}\right>\right|+\epsilon\delta_{i}$ for each $1\leq i\leq n$. Then we have
		\begin{align*}
			\|(T_{\Phi}y_{i})_{i}\|_{\lambda^{\times}}^{s}&=\sup_{(\alpha_{i})_{i}\in B_{\lambda}}\sum\limits_{i=1}^{n}\|T_{\Phi}(y_{i})\||\alpha_{i}|\\
			&\leq \sup_{(\alpha_{i})_{i}\in B_{\lambda}}\sum\limits_{i=1}^{n}\left|\left<\Phi,x_{i}\otimes y_{i}\right>\right||\alpha_{i}| +\epsilon\\
			&\leq \|\Phi\| \|(y_{i})_{i}\|_{\lambda^{\times}}^{mid}
		\end{align*}
		follows from
		$\gamma_{\lambda}^{c}(\cdot)\leq\gamma_{\lambda}(\cdot)$.
	\end{proof}

	\textbf{ACKNOWLEDGEMENT}. The first author acknowledges  the financial support from the Department of Science and Technology, India (Grant No. DST/INSPIRE Fellowship/2019/IF190043) and second author acknowledges the financial support from the Department of Science and Technology, India(Grant No. SPG/2021/004500).
	
\end{document}